\documentclass[12pt,reqno]{amsart}
\usepackage{amssymb}
\usepackage{graphicx}
\usepackage{bm}
\usepackage{hyperref}
\usepackage{enumerate}
\setlength{\textheight}{23cm}
\setlength{\textwidth}{16cm}
\setlength{\oddsidemargin}{0cm}
\setlength{\evensidemargin}{0cm}
\setlength{\topmargin}{0cm}

\allowdisplaybreaks[1]

\newcommand{\B}{\mathbb B}
\newcommand{\C}{\mathbb C}
\newcommand{\D}{\mathbb D}

\newcommand{\N}{\mathbb N}
\newcommand{\R}{\mathbb R}

\renewcommand{\H}{\mathbb H}
\renewcommand{\P}{\mathbb P}
\newcommand{\Q}{\mathbb Q}

\newcommand{\Z}{\mathbb Z}

\newcommand{\cE}{E}

\newcommand{\cF}{\mathcal F}

\newcommand{\re}{\mathrm{Re}}
\newcommand{\im}{\mathrm{Im}}

\renewcommand{\a}{\alpha}
\renewcommand{\b}{\beta}
\newcommand{\g}{\gamma}

\renewcommand{\l}{\lambda}

\newcommand{\h}{\vartheta}

\newcommand{\f}{\varphi}

\newcommand{\G}{\mathit{\Gamma}}

\newcommand{\w}{\omega}

\newcommand{\SL}{\mathrm{SL}}
\newcommand{\PSL}{\mathrm{PSL}}
\newcommand{\GL}{\mathrm{GL}}

\newcommand{\PGa}{\mathrm{P}\Gamma}

\renewcommand{\i}{\mathbf{i}}

\newcommand{\la}{\langle}
\newcommand{\ra}{\rangle}
\newcommand{\tr}{\;^t}

\newcommand{\pr}{\mathrm{pr}}

\newcommand{\comment}[1]{}
\newcommand{\wt}[1]{\widetilde{#1}}
\newcommand{\wh}[1]{\widehat{#1}}

\newcommand{\ds}[1]{\displaystyle{#1}}

\title[Analogies of Jacobi's formula]
{Analogies of Jacobi's formula}
\author{Keiji Matsumoto}
 \address[Matsumoto]{
 Department of Mathematics,
 Faculty of Science
 Hokkaido University,
 Sapporo 060-0810, Japan
}

\keywords{Jacobi's formula, Theta function, Hypergeometric function.}
 \subjclass[2010]{Primary 14K25; Secondary 33C05, 11F03.}
\date{today}

\theoremstyle{plain} 
\newtheorem{theorem}{\indent\sc Theorem}[section]
\newtheorem{lemma}[theorem]{\indent\sc Lemma}
\newtheorem{cor}[theorem]{\indent\sc Corollary}
\newtheorem{proposition}[theorem]{\indent\sc Proposition}

\theoremstyle{definition} 
\newtheorem{definition}[theorem]{\indent\sc Definition}
\newtheorem{fact}[theorem]{\indent\sc Fact}
\newtheorem{remark}[theorem]{\indent\sc Remark}

\numberwithin{equation}{section}


\begin{document}

\begin{abstract}
By considering Schwarz's map for 
the hypergeometric differential equation with parameters 
$(a,b,c)=(1/6,1/2,1)$ or $(1/12,5/12,1)$,
we give some analogies of Jacobi's formula
$\h_{00}(\tau)^2= F(1/2,1/2,1;\l(\tau))$, 
where $\h_{00}(\tau)$ and $\l(\tau)$ are 
the theta constant and the lambda function 
defined on the upper-half plane,  
and 
$F(a,b,c;z)$ is the hypergeometric series defined on the unit disk.
As applications of our formulas, we give several functional equations 
for $F(a,b,c;z)$.  
\end{abstract}

\maketitle
\section{Introduction}

Schwarz's map is defined by the analytic continuation of 
the ratio of linearly independent local solutions to 
the hypergeometric differential equation $\cE(a,b,c)$ given in \eqref{eq:HGDE}. 
It is a multi-valued map 
from $Z=\C-\{0,1\}$ to the complex projective line $\P^1$ in general. 
Schwarz's map $\f_0$ for $(a,b,c)=(\frac{1}{2},\frac{1}{2},1)$ is 
given by the map
\begin{equation}
\label{eq:intr-f0}
z\mapsto \f_0(z)=\i \cdot 
\frac{F(\frac{1}{2},\frac{1}{2},1;1-z)}{F(\frac{1}{2},\frac{1}{2},1;z)},
\end{equation} 
around $z=\frac{1}{2}$, where $\i=\sqrt{-1}$, and 
$F(a,b,c;z)$ is the hypergeometric series given in \eqref{eq:HGS}. 
It is known that the image  $\f_0(Z)$ is the upper-half plane $\H$, and that 
the projective monodromy of $\f_0$ is $\PGa(2)=\Gamma(2)/\{\pm I_2\}$, 
where $I_2$ is the unit matrix of size $2$, and  
$\Gamma(2)$ is the principally congruence subgroup of $\SL_2(\Z)$ 
of level $2$. 
The inverse of $\f_0$ is expressed by the lambda function 
\begin{equation}
\label{eq:intr-lambda}
\tau\mapsto \lambda(\tau)=\frac{\h_{10}(\tau)^4}{\h_{00}(\tau)^4},
\end{equation}
where $\h_{pq}(\tau)$ is the theta constant with characteristics $p,q$
given in  \eqref{eq:theta}. 
Jacobi's formula is
$$\h_{00}(\tau)^2=F(\frac{1}{2},\frac{1}{2},1;\lambda(\tau))
$$
where $\tau$ is a complex number with large imaginary part.  
Here note that this formula is an equality between 
the denominators in \eqref{eq:intr-f0} and \eqref{eq:intr-lambda} 
under the relation 
$z=\l(\tau)$ in \eqref{eq:intr-lambda}.
Some analogies of Jacobi's formula are given in 
\cite[Theorem 2.3, Theorem 2.6]{BB2},
which relate $F(\frac{1}{3},\frac{2}{3},1;z)$ and 
$F(\frac{1}{4},\frac{3}{4},1;z)$ to modular forms 
with respect to their projective monodromy group 
isomorphic to 
the triangle groups $(3,\infty,\infty)$ and $(2,\infty,\infty)$, 
respectively.

In this paper,  we give some analogies of Jacobi's formula by considering 
Schwarz's maps $\f_1$ for $(a,b,c)=(\frac{1}{6},\frac{1}{2},1)$ 
and $\f_2$ for $(a,b,c)=(\frac{1}{12},\frac{5}{12},1)$. 
There exists a basis of the space of local solutions to 
$\cE(\frac{1}{6},\frac{1}{2},1)$ (or $\cE(\frac{1}{12},\frac{5}{12},1)$)
around $z=\frac{1}{2}$ such 
that the image of $\f_1$ (or $\f_2$) is an open dense subset in $\H$
and that the projective monodromy of $\f_1$  (or $\f_2$) is a subgroup of 
$\PSL_2(\Z)=\SL_2(\Z)/\{\pm I_2\}$   
isomorphic to the triangle group $(3,3,\infty)$ (or $(2,3,\infty)$). 
We give each basis 
as linear combinations of $F(a,b,a+b-c+1;1-z)$ and $F(a,b,c;z)$ 
for $(a,b,c)=(\frac{1}{6},\frac{1}{2},1)$, $(\frac{1}{12},\frac{5}{12},1)$,
and the circuit matrices along $\rho_0$ and $\rho_1$ 
with respect to them as 
$$\begin{array}{lcl}
T^2,\ \w^2 W
&\textrm{for} & (a,b,c)=(\frac{1}{6},\frac{1}{2},1),\\
T,\quad \i J
&\textrm{for} & (a,b,c)=(\frac{1}{12},\frac{5}{12},1),\\
  \end{array}
$$
where $\rho_k$ $(k=0,1)$ is a positively oriented circle with
terminal $z=\frac{1}{2}$, radius $\frac{1}{2}$ and center $z=k$, and 
$$\w=\dfrac{-1+\sqrt3\i}{2}, \quad 
T=\begin{pmatrix}
 1 & 1 \\
 0 & 1
 \end{pmatrix},\quad
W=\begin{pmatrix}
-1 &-1 \\
 1 & 0
 \end{pmatrix},\quad
J=\begin{pmatrix}
 0 & 1 \\
-1 & 0
 \end{pmatrix}. 
$$
The group generated by $T$ and $J$ is $\SL_2(\Z)$, and that by $T^2$ and $W$ is 
$$\Gamma(2)^{1/3}=\{g\in \SL_2(\Z)\mid g^3\in \Gamma(2)\},$$
which is an intermediate group of $\SL_2(\Z)$ and $\Gamma(2)$.
Note that the quotients $\SL_2(\Z)/\Gamma(2)$ and $\Gamma(2)^{1/3}/\Gamma(2)$ 
are isomorphic to the symmetric group $S_3$ of degree $3$ and 
the alternating group $A_3$ of degree $3$, respectively.
We express the inverses of $\f_1$ and $\f_2$ by the lambda function; in fact, 
$\f_1^{-1}$ is given in Theorem \ref{th:inv-Sch} 
by 
an invariant 
$$\nu(\tau)=\frac{3\sqrt{3}\i\l(\tau)(1-\l(\tau))}{(\l(\tau)+\w^2)^3}$$
under the action of $\Gamma(2)^{1/3}/\Gamma(2)\simeq A_3$, and 
$\f_2^{-1}$ is given in Theorem \ref{th:inv-f2} 
by the reciprocal of the $j$-invariant
$$j(\tau)=\frac{4}{27} \frac{(\l(\tau)^2-\l(\tau)+1)^3}
{\l(\tau)^2(1-\l(\tau))^2} . 
$$
By using the denominators of $\f_1,\f_1^{-1}$ and $\f_2,\f_2^{-1}$,
we have analogies of Jacobi' formula: 
\begin{align*}
\h_{00}(\tau)^4+\w\h_{10}(\tau)^4&=
F(\frac{1}{6},\frac{1}{2},1;\nu(\tau))^2,\\
%
%
E_4(\tau)
&=F(\frac{1}{12},\frac{5}{12},1;\frac{1}{j(\tau)})^4,
%
\end{align*}
in Theorems \ref{th:Ana1-Jacobi}, 
\ref{th:Ana2-Jacobi},  
where $E_4(\tau)$ is the normalized Eisenstein series of weight $4$.  
We remark that the factor $\w^2$ in the circuit matrix $\w^2W$ 
along $\rho_1$ plays an important role in our proof of the former equality. 

By eliminating $\h_{pq}(\tau)$ from Jacobi's formula and its analogies, 
we have functional equations for $F(a,b,c;z)$ such as 
$$
F(\frac{1}{12},\frac{5}{12},1;\frac{z^2}{4(z-1)})^2
=F(\frac{1}{6},\frac{1}{2},1;z)F(\frac{1}{6},\frac{1}{2},1;\frac{z}{z-1}); 
$$
see Corollary \ref{eq:FE-HGS} for other formulas. 


We summarize the contents of this paper.
In \S2, we collect facts for the hypergeometric series and 
the hypergeometric differential equations. 
We cannot select a well-used pair of 
$z^{1-c}F(a-c+1,b-c+1,2-c;z)$ and $F(a,b,c;z)$
as a basis of the space of local solutions to $\cE(a,b,c)$ around 
$\dot z=\frac{1}{2}$, since we set $c=1$ in our study.  
We show the linear independence of $F(a,b,a+b-c+1;1-z)$ and $F(a,b,c;z)$
under some conditions on $a,b,c$, and give a simple way to obtain 
circuit matrices of 
$\rho_0$ and $\rho_1$ with respect to constant multiples of them. 
In \S3, we review properties of the theta constant by referring to \cite{Mu}.
We give actions of $\SL_2(\Z)/\Gamma(2)\simeq S_3$ on $\h_{pq}(\tau)^2$.
In \S4, we give facts about 
Schwarz's map $\f_0$ for $(a,b,c)=(\frac{1}{2},\frac{1}{2},1)$ 
by referring to \cite{Yo}. 
We study Schwarz's map $\f_1$ for $(a,b,c)=(\frac{1}{6},\frac{1}{2},1)$ 
in \S5 and Schwarz's map $\f_2$ for $(a,b,c)=(\frac{1}{12},\frac{5}{12},1)$ 
in \S6, and give functional equations for $F(a,b,c;z)$ in \S7. 
\section{Fundamental properties of $F(a,b,c;z)$ }
We begin with defining the hypergeometric series.
\begin{definition}
\label{def:HGS}
The hypergeometric series $F(a,b,c;z)$ is defined by 
\begin{equation}
\label{eq:HGS}
F(a,b,c;z)=\sum_{n=0}^\infty \frac{(a,n)(b,n)}{(c,n)(1,n)}z^n, 
\end{equation}
where $z$ is a complex variable,
$a,b,c$ are complex parameters with $c\ne 0,-1,-2,\cdots$, and 
$(a,n)=\G(a+n)/\G(a)=a(a+1)\cdots(a+n-1)$.
It converges absolutely and uniformly on any compact set 
in the unit disk $\B_0=\{z\in \C\mid |z|<1\}$ for any fixed $a,b,c$.
\end{definition}

The radius of the convergence of $F(a,b,c;z)$ is $1$ for generic $a,b,c$.
Gauss's identity gives the limit of $F(a,b,c;z)$ as 
$z\to 1$ with $|z|<1$.

\begin{fact}[Gauss's identity {\cite[(14) in p.61]{Er}}]
\label{fact:G-K}
If 
$\re(c-a-b)>0$ 
then 
\begin{equation}
\label{eq:G-K}
\lim_{z\to 1,|z|<1}F(a,b,c;z)=\frac{\G(c)\G(c-a-b)}{\G(c-a)\G(c-b)}.
\end{equation}
\end{fact}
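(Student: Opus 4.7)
My approach is to combine Euler's integral representation with Abel's continuity theorem, thereby reducing the problem to evaluating a single beta integral.

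First, under the temporary restriction $\re(c)>\re(b)>0$, I would derive Euler's integral
\begin{equation*}
F(a,b,c;z)=\frac{\G(c)}{\G(b)\G(c-b)}\int_0^1 t^{b-1}(1-t)^{c-b-1}(1-zt)^{-a}\,dt
\end{equation*}
by expanding $(1-zt)^{-a}$ as a binomial series and integrating term by term, using the beta evaluation $\int_0^1 t^{b+n-1}(1-t)^{c-b-1}dt=\G(b+n)\G(c-b)/\G(c+n)$.

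Next, by Stirling's asymptotic $\G(x+n)\sim\G(n)\,n^x$ as $n\to\infty$, the general coefficient of \eqref{eq:HGS} is $O(n^{\re(a+b-c)-1})$, so under $\re(c-a-b)>0$ the hypergeometric series converges absolutely and uniformly on the closed unit disk by the Weierstrass $M$-test; hence $\lim_{z\to 1,|z|<1}F(a,b,c;z)=F(a,b,c;1)$ independently of the direction of approach. To evaluate $F(a,b,c;1)$, I would set $z=1$ directly in the Euler integral: the hypothesis $\re(c-a-b)>0$ guarantees convergence of $\int_0^1 t^{b-1}(1-t)^{c-a-b-1}dt=B(b,c-a-b)=\G(b)\G(c-a-b)/\G(c-a)$, and the prefactor simplifies the result to $\G(c)\G(c-a-b)/[\G(c-a)\G(c-b)]$. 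The auxiliary restriction $\re(c)>\re(b)>0$ is then removed by analytic continuation in the parameters: both sides of the desired identity are holomorphic on $\{\re(c-a-b)>0,\ c\notin\Z_{\le 0}\}$ and agree on an open subset of it.

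The main obstacle is the justification in the previous paragraph that the limit $z\to 1$ commutes with the infinite sum. The Weierstrass/Abel route avoids a direct analysis of $(1-zt)^{-a}$ as $z\to 1$ with $|z|<1$ --- a route that, when $\re(a)>0$, would require a Stolz-angle argument and a careful dominating function --- and instead shifts the burden onto the Stirling-based absolute convergence of \eqref{eq:HGS} at the boundary point $z=1$. Once that input is in hand, the rest reduces to the classical beta integral and a harmless analytic continuation.
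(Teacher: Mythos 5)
The paper offers no proof of this statement: it is quoted as a classical \emph{Fact} with a citation to Erd\'elyi et al., so there is no in-paper argument to compare yours against. Judged on its own, your proposal is the standard proof of Gauss's summation theorem and is essentially correct. The Stirling estimate $(a,n)(b,n)/\bigl((c,n)(1,n)\bigr)=O(n^{\re(a+b-c)-1})$ does give absolute and uniform convergence on the closed unit disk when $\re(c-a-b)>0$, hence continuity of $F$ up to $z=1$ and equality of the limit with the boundary sum $F(a,b,c;1)$; the beta evaluation $B(b,c-a-b)=\G(b)\G(c-a-b)/\G(c-a)$ and the resulting Gamma quotient are right; and the removal of the auxiliary hypothesis $\re(c)>\re(b)>0$ by the identity theorem is legitimate, since the parameter domain $\{\re(c-a-b)>0,\ c\neq 0,-1,-2,\dots\}$ is connected (a convex set minus complex hyperplanes) and both sides are holomorphic there.

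The one step you pass over too quickly is ``setting $z=1$ directly in the Euler integral.'' That representation is established for $|z|<1$, and convergence of $\int_0^1 t^{b-1}(1-t)^{c-a-b-1}dt$ by itself does not show that this integral equals $F(a,b,c;1)$. You need either (i) to redo the term-by-term integration at $z=1$: expand $(1-t)^{-a}=\sum_n \frac{(a,n)}{n!}t^n$ and observe that $\sum_n \frac{|(a,n)|}{n!}\,B(\re(b)+n,\re(c-b))=O\bigl(\sum_n n^{\re(a+b-c)-1}\bigr)<\infty$ --- which is exactly your Stirling input again --- so Fubini applies; or (ii) to show the integral is continuous in $z$ up to $z=1$ by dominating $|(1-zt)^{-a}|$ via $|1-zt|\ge 1-t$ and the boundedness of $\arg(1-zt)$ for $|z|\le 1$. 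Either repair is one sentence, after which the argument is complete; it is the classical route one finds in Erd\'elyi or Whittaker--Watson, and there is nothing to object to beyond this bookkeeping.
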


The series $F(a,b,c;z)$ satisfies the hypergeometric differential equation 
\begin{equation}
\label{eq:HGDE}
\cE(a,b,c):\big[z(1-z)\frac{d^2}{dz^2}+\{c-(a+b+1)z\}
\frac{d}{dz}-ab\big]\cdot f(z)=0,
\end{equation} 
which is a second order linear ordinary differential equation with 
regular singular points $z=0,1,\infty$. 
The space $\mathcal{F}_{\dot z}(a,b,c)$ of 
(single-valued) local solutions to $\cE(a,b,c)$ around a point 
$\dot z\in Z=\C-\{0,1\}$ is a $2$-dimensional complex vector space.
We take $\dot z=\frac{1}{2}\in Z$, and fix it as a base point of $Z$. 
We give a basis of $\mathcal{F}_{\dot z}(a,b,c)$ for $\dot z=\frac{1}{2}$. 

\begin{lemma}[A fundamental system of solutions to $\cE(a,b,c)$]
\label{lem:basis}
Suppose that $a,b,c$ satisfy $a,b\notin \Z$ and 
$\re(a),\re(b-c+1),\re(c-a)>0$. 
\begin{enumerate}
\item \label{enum:Basis}
The integrals 
\begin{equation}
\label{eq:Basis}
\begin{array}{l}
\ds{f_1(z)=\int_{-\infty}^{0} e^{-\pi\i(a-1)}(-t)^{b-c}(z-t)^{-b}(1-t)^{c-a-1}dt,}
\\[3mm]
\ds{f_2(z)=\int_1^\infty t^{b-c}(t-z)^{-b}(t-1)^{c-a-1}dt,}
\end{array}
\end{equation}
converge and satisfy the hypergeometric differential equation $\cE(a,b,c)$,  
where we assign each branch of the integrands in \eqref{eq:Basis}
over the integral intervals to positive real values 
for real $z$ near to $\dot z =\frac{1}{2}$.
They admit expressions in terms of the hypergeometric series as 
\begin{equation}
\label{eq:Basis-HG}
\begin{array}{l}
f_1(z)=e^{-\pi\i(a-1)}B(a,b-c+1)F(a,b,a+b-c+1;1-z),\\[3mm] 
f_2(z)=B(a,c-a)F(a,b,c;z),
\end{array}
\end{equation}
where $B$ denotes the beta function. 

\item \label{enum:Wronskian}
The functions $f_1(z)$ and $f_2(z)$ span the space 
$\mathcal{F}_{\dot z}(a,b,c)$.   
In particular, we have 
\begin{equation}
\label{eq:Wronskian1}
\begin{array}{ll}
&
\det\begin{pmatrix}
F(a,b,a\!+\!b\!-\!c\!+\!1;1-z)& 
\dfrac{d}{dz}F(a,b,a\!+\!b\!-\!c\!+\!1;1-z)\\
F(a,b,c;z)& \dfrac{d}{dz}F(a,b,c;z)\\ 
\end{pmatrix}\\
=&\dfrac{\G(c)\G(a+b-c+1)}{\G(a)\G(b)}z^{-c}(1-z)^{c-a-b-1}.
\end{array}
\end{equation}
\end{enumerate}
\end{lemma}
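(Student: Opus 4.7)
The plan is to prove part \itref{enum:Basis} by converting the Euler-type integrals in \eqref{eq:Basis} to standard hypergeometric integrals, and then to establish part \itref{enum:Wronskian} by an Abel/asymptotic computation of the Wronskian near $z=1$.

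For part \itref{enum:Basis} I would first check absolute convergence: the integrand of $f_2$ decays as $t^{-\re(a)-1}$ at $\infty$ and behaves as $(t-1)^{\re(c-a)-1}$ at $t=1$, while that of $f_1$ decays as $|t|^{-\re(a)-1}$ at $-\infty$ and behaves as $|t|^{\re(b-c)}$ at $0$, so the three positivity hypotheses are exactly what is needed. Then I would reduce each integral to the Euler form
\[
\int_0^1 r^{\b-1}(1-r)^{\g-\b-1}(1-zr)^{-a}\,dr=B(\b,\g-\b)\,F(a,\b,\g;z).
\]
The substitution $t=1/u$ turns $f_2$ into $\int_0^1 u^{a-1}(1-u)^{c-a-1}(1-zu)^{-b}du$, giving the second line of \eqref{eq:Basis-HG}; the substitution $t=1-1/r$ turns $f_1$ into $e^{-\pi\i(a-1)}\int_0^1 r^{a-1}(1-r)^{b-c}(1-(1-z)r)^{-b}dr$, giving the first line. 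Since $F(a,b,c;z)$ is by construction a solution of $\cE(a,b,c)$ and the change of variable $z\mapsto 1-z$ carries $\cE(a,b,c)$ into $\cE(a,b,a+b-c+1)$, both integrals lie in $\mathcal{F}_{\dot z}(a,b,c)$.

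For part \itref{enum:Wronskian}, put $F_1(z):=F(a,b,a+b-c+1;1-z)$ and $F_2(z):=F(a,b,c;z)$. Abel's identity applied to the normal form $f''+\frac{c-(a+b+1)z}{z(1-z)}f'-\frac{ab}{z(1-z)}f=0$ forces $W(F_1,F_2)(z)=C\,z^{-c}(1-z)^{c-a-b-1}$, so the task reduces to identifying the constant $C$. I would expand near $z=1$ via Kummer's connection formula
\[
F(a,b,c;z)=\tfrac{\G(c)\G(c-a-b)}{\G(c-a)\G(c-b)}F_1(z)+\tfrac{\G(c)\G(a+b-c)}{\G(a)\G(b)}(1-z)^{c-a-b}F(c-a,c-b,c-a-b+1;1-z),
\]
and read off the $(1-z)^{c-a-b-1}$ coefficient of $F_1F_2'-F_1'F_2$ as $z\to 1$: it comes solely from differentiating the factor $(1-z)^{c-a-b}$ inside $F_2$, and the identity $(a+b-c)\G(a+b-c)=\G(a+b-c+1)$ converts the result into $\G(c)\G(a+b-c+1)/(\G(a)\G(b))$, yielding \eqref{eq:Wronskian1}. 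Non-vanishing of this constant under the hypotheses forces $W\neq 0$ on $Z$, proving linear independence and hence that $f_1,f_2$ span $\mathcal{F}_{\dot z}(a,b,c)$.

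The main technical care lies in branch tracking in part \itref{enum:Basis}: the integrands in \eqref{eq:Basis} must be read with the real-positive branch on the integration contour for real $z$ near $\dot z=\tfrac12$, and the phase factor $e^{-\pi\i(a-1)}$ in $f_1$ is precisely what is needed so that after the substitution $t=1-1/r$ the residual factors $r^{a-1},(1-r)^{b-c},(1-(1-z)r)^{-b}$ are read with their real-positive branches on $[0,1]$. A minor auxiliary issue is that Kummer's connection formula used in part \itref{enum:Wronskian} requires $c-a-b\notin\Z$, but the resulting Wronskian identity is analytic in $(a,b,c)$ and extends by analytic continuation to all parameter values where both sides are defined.
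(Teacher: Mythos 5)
Your proof is correct, and part \itref{enum:Basis} follows essentially the paper's route: the paper also reduces the integrals to Euler-type integrals (via $s=1/t$, plus a further change $t'=1-t$ for $f_1$, which composes to your single substitution $t=1-1/r$) and reads off \eqref{eq:Basis-HG}, with the same attention to the branch carried by the phase $e^{-\pi\i(a-1)}$. For part \itref{enum:Wronskian}, however, you take a genuinely different path. The paper simply invokes Varchenko's determinant formula \cite[Theorem 1.1]{Va} to evaluate the Wronskian of the two integrals $f_1,f_2$ directly in their integral form, obtaining $e^{-\pi\i(a-1)}\frac{\G(a)\G(c-a)\G(b-c+1)}{\G(b)}z^{-c}(1-z)^{c-a-b-1}$, and then converts to \eqref{eq:Wronskian1} by dividing out the Beta-function normalizations from \eqref{eq:Basis-HG}; linear independence follows from the non-vanishing of the constant, exactly as in your argument. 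Your route via Abel's identity plus Kummer's connection formula at $z=1$ is more elementary and self-contained (no external determinant theorem), and your constant $B\cdot(a+b-c)=\G(c)\G(a+b-c+1)/(\G(a)\G(b))$ checks out, including the sign; the price is the extra caveat about $c-a-b\in\Z$ (which does occur in the paper's applications, e.g.\ $(a,b,c)=(\frac12,\frac12,1)$), correctly disposed of by analytic continuation in the parameters since both sides of \eqref{eq:Wronskian1} are analytic in $(a,b,c)$ on the hypothesis domain. The paper's citation buys uniformity in the parameters at the cost of an external reference; your argument buys transparency.
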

\begin{proof}
\eqref{enum:Basis} 
It is elementary to show the convergence of these integrals, 
which are given by a variable change $s=1/t$ to usual Euler type integrals 
$$\int_\ell s^{a-1}(1-s)^{c-a-1}(1-sz)^{-b}ds$$
satisfying $\cE(a,b,c)$, 
where $\ell$ is the open interval $(0,1)$ or $(1,\infty)$. We remark that  
the integrand 
$e^{-\pi\i(a-1)}(-t)^{b-c}(z-t)^{-b}(1-t)^{c-a-1}$ 
in $f_1(z)$ is the analytic continuation of 
$u(t)=t^{b-c}(t-z)^{-b}(t-1)^{c-a-1}$ via the upper-half plane of the $t$-space. 
The expression of $f_2(z)$ in \eqref{eq:Basis-HG} is a direct consequence
of the above, and we obtain that of $f_1(z)$ in \eqref{eq:Basis-HG}
by a further variable change $t'=1-t$ to the integral expressing $f_1(z)$. 
Note that 
neither the parameter $c$ nor $a+b-c+1$ in \eqref{eq:Basis-HG}
becomes a non-positive integer
under the conditions $\re(a)$, $\re(b-c+1)$, $\re(c-a)>0$. 

\medskip\noindent
\eqref{enum:Wronskian} By \cite[Theorem 1.1]{Va}, we have  
$$
\det\begin{pmatrix}
f_1(z) & \dfrac{d}{dz} f_1(z)\\
f_2(z) & \dfrac{d}{dz} f_2(z)\\
\end{pmatrix}
=e^{-\pi\i(a-1)}\dfrac{\G(a)\G(c-a)\G(b-c+1)}{\G(b)}z^{-c}(1-z)^{c-a-b-1}.$$
Since the last term never vanishes under the assumption,
$f_1(z)$ and $f_2(z)$ are linearly independent, 
and span the space $\mathcal{F}_{\dot z}(a,b,c)$. 
We can rewrite this equality to \eqref{eq:Wronskian1}
by \eqref{eq:Basis-HG}. 
\end{proof}

On a small disk with a point hole in the center $z_0$ $(z_0=0,1,\infty)$, 
there are (multi-valued) solutions to $\cE(a,b,c)$ of the forms 
$$z^{e_{0,i}}f_{0,i}(z),
\quad 
(1-z)^{e_{1,i}}f_{1,i}(z),
\quad 
(1/z)^{e_{\infty,i}}f_{\infty,i}(z),
$$
where $f_{z_0,i}(z)$ are holomorphic functions around $z=z_0$ with 
$f_{z_0,i}(z_0)=1$ ($z_0=0,1,\infty$), and $e_{z_0,i}$ are given in 
Table \ref{tab:R-scheme} called Riemann's scheme. 
\begin{table}[hbt]
$$
\begin{array}{|c|ccc|}
\hline
z_0 & 0 & 1 & \infty \\
\hline
e_{z_0,1}&  0  &   0   & a\\
e_{z_0,2}& 1-c & c-a-b & b\\
\hline
e_{z_0,2}-e_{z_0,1}&1-c & c-a-b & b-a\\
\hline
  \end{array}
$$
\caption{Riemann's scheme}
\label{tab:R-scheme}
\end{table}

Any element in $\mathcal{F}_{\dot z}(a,b,c)$ admits the analytic continuation 
along any path in $Z$. 
In particular, a loop $\rho$ in $Z$ with the base point $\dot z$ leads to 
a linear transformation of $\mathcal{F}_{\dot z}(a,b,c)$ called the circuit 
transformation along $\rho$. We have 
a homomorphism from the fundamental group $\pi_1(Z,\dot z)$ to 
the general linear group $\mathrm{GL}(\mathcal{F}_{\dot z}(a,b,c))$ of 
$\mathcal{F}_{\dot z}(a,b,c)$, which is called the monodromy representation of 
$\cE(a,b,c)$.
We take loops $\rho_0$ and $\rho_1$ with terminal $\dot z$ as 
positively oriented circles with radius $\frac{1}{2}$ and 
center $0$ and $1$, respectively. 
Since $\pi_1(Z,\dot z)$ is a free group generated by $\rho_0$ and $\rho_1$, 
the monodromy representation of $\cE(a,b,c)$ is uniquely characterized 
by the images of $\rho_0$ and $\rho_1$.
We give their explicit forms 
with respect to the basis 
of $\mathcal{F}_{\dot z}(a,b,c)$ in Lemma \ref{lem:basis}.

\begin{lemma} 
\label{lem:monodromy}
The loops $\rho_0$ and $\rho_1$  
lead to linear transformations sending the 
basis $\mathbf{F}(z)=\tr(f_1(z),f_2(z))$ of $\mathcal{F}_{\dot z}(a,b,c)$ 
to $M_0 \mathbf{F}(z)$ and $M_1\mathbf{F}(z)$, where the circuit matrices 
$M_0$ and $M_1$ are 
\begin{equation}
\label{eq:Cir-Mat}
M_0=
\begin{pmatrix}
e^{-2\pi\i c} & e^{-2\pi\i c}-e^{-2\pi\i a}  \\  
0 & 1
\end{pmatrix},
\quad 
M_1=
\begin{pmatrix}
1   & 0\\
e^{2\pi\i(c- b)}-1 & e^{2\pi\i(c-a-b)}
\end{pmatrix}.
\end{equation}
\end{lemma}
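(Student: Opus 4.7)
The plan is to exploit the local-exponent data from Riemann's scheme to read off the ``trivial'' row of each circuit matrix for free, and then use a standard Kummer connection formula to compute the other row. The whole computation boils down to two analogous one-line rearrangements followed by a Gamma-function simplification.

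For $M_0$: since $f_2(z)=B(a,c-a)F(a,b,c;z)$ is holomorphic at $z=0$ (local exponent $0$), it is invariant under $\rho_0$, so the second row of $M_0$ is $(0,\,1)$. For the first row I expand the other basis element by the connection formula
$$F(a,b,a+b-c+1;1-z)=A_1F(a,b,c;z)+A_2\,z^{1-c}F(a-c+1,b-c+1,2-c;z),$$
with $A_1,A_2$ explicit ratios of Gamma functions. Applying $\rho_0$ fixes the first summand and multiplies the second by $e^{2\pi\i(1-c)}=e^{-2\pi\i c}$; rearranging eliminates the unknown $z^{1-c}$-branch and gives
$$\rho_0 F(a,b,a+b-c+1;1-z)=e^{-2\pi\i c}F(a,b,a+b-c+1;1-z)+A_1(1-e^{-2\pi\i c})F(a,b,c;z).$$
Rescaling by the prefactors $e^{-\pi\i(a-1)}B(a,b-c+1)$ of $f_1$ and $B(a,c-a)^{-1}$ of $f_2$ yields the first row of $M_0$.

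For $M_1$: $f_1(z)$ is a constant multiple of $F(a,b,a+b-c+1;1-z)$, which is holomorphic at $z=1$ (exponent $0$), so $\rho_1$ fixes $f_1$ and the first row of $M_1$ is $(1,\,0)$. Gauss's connection formula
$$F(a,b,c;z)=A_1'F(a,b,a+b-c+1;1-z)+A_2'(1-z)^{c-a-b}F(c-a,c-b,c-a-b+1;1-z),$$
together with the monodromy factor $e^{2\pi\i(c-a-b)}$ on the $(1-z)^{c-a-b}$-branch, determines the second row of $M_1$ by exactly the same subtraction trick.

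The main obstacle is the final algebraic simplification. Each off-diagonal entry I obtain is a product of four Gamma functions times the branch phase $e^{-\pi\i(a-1)}$ times a factor $(1-e^{\pm 2\pi\i(\cdots)})$, and it must collapse to the clean exponential differences $e^{-2\pi\i c}-e^{-2\pi\i a}$ and $e^{2\pi\i(c-b)}-1$ in \eqref{eq:Cir-Mat}. I would handle this by repeated application of Euler's reflection formula $\G(x)\G(1-x)=\pi/\sin(\pi x)$ to turn every Gamma ratio into a ratio of sines, then use $\sin(\pi x)=(e^{\pi\i x}-e^{-\pi\i x})/(2\i)$ so that the $2\i$'s and sines cancel in pairs, leaving only the advertised exponentials once the factor $e^{-\pi\i(a-1)}$ is absorbed. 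Because the connection formulas strictly require $c\notin\Z$, the identities for $M_0$ and $M_1$ then extend to every parameter admissible in Lemma~\ref{lem:basis} by analyticity in $(a,b,c)$.
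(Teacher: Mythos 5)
Your proposal is correct, and I verified that the Gamma-factor bookkeeping does collapse as you claim: for the $(1,2)$-entry of $M_0$ the coefficient is $e^{-\pi\i(a-1)}\frac{B(a,b-c+1)}{B(a,c-a)}\cdot\frac{\G(a+b-c+1)\G(1-c)}{\G(a-c+1)\G(b-c+1)}\,(1-e^{-2\pi\i c})=e^{-\pi\i(a-1)}\frac{\sin\pi(c-a)}{\sin\pi c}(1-e^{-2\pi\i c})=e^{-2\pi\i c}-e^{-2\pi\i a}$, and the $(2,1)$-entry of $M_1$ simplifies to $e^{2\pi\i(c-b)}-1$ in the same way. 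However, your route is genuinely different from the paper's. You use the Kummer--Gauss connection formulas between the basis at $\dot z=\tfrac12$ and the local Frobenius bases at $z=0$ and $z=1$, which forces you to exclude $c\in\Z$ (and, for the $M_1$ computation, $c-a-b\in\Z$) and recover those cases by analyticity in $(a,b,c)$; this is not a cosmetic restriction here, since every application in the paper has $c=1$, and the case $(a,b,c)=(\tfrac12,\tfrac12,1)$ even has $c-a-b=0$, so both connection formulas degenerate exactly at the parameters of interest. The paper deliberately sidesteps connection coefficients: it gets the trivial rows and the diagonal entries exactly as you do (single-valuedness plus the Riemann scheme), but then determines the two off-diagonal entries jointly, first relating them through $\mathrm{tr}\,(M_1M_0)^{-1}=e^{2\pi\i a}+e^{2\pi\i b}$, and then pinning one down by identifying $(1,1)$ as an $e^{2\pi\i b}$-eigenvector of $(M_1M_0)^{-1}$ via Cauchy's theorem applied to the Euler integrals ($\int_{\ell_{01}}u\,dt=-f_1-f_2$). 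Your approach is more mechanical and computes each entry independently at the cost of the extra limiting argument; the paper's works uniformly in the parameters admitted by Lemma~\ref{lem:basis} but leans on the specific integral representations and a correct handling of the orientation of $\rho_0^{-1}\rho_1^{-1}$. Both are valid; if you keep yours, state explicitly that the matrix entries are holomorphic in $(a,b,c)$ on the admissible parameter set (e.g.\ via Cramer's rule and the nonvanishing Wronskian \eqref{eq:Wronskian1}), since that is what licenses the extension across $c\in\Z$ and $c-a-b\in\Z$.
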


\begin{proof}
Since $f_2(z)$ and $f_1(z)$ are single valued 
on $\B_0=\{z\in \C\mid |z|<1\}$ and $\B_1=\{z\in \C\mid |1-z|<1\}$, 
respectively, the circuit matrices $M_0$ and $M_1$ take forms of 
$$M_0=
\begin{pmatrix}
y_0'& y_0  \\  
0 & 1
\end{pmatrix},
\quad 
M_1=
\begin{pmatrix}
1   & 0\\
y_1   & y_1' 
\end{pmatrix}.
$$
Since the eigenvalues of $M_0$ are $1$ and $e^{-2\pi\i c}$ and 
those of $M_1$ are $1$ and $e^{2\pi\i(c-a-b)}$ by Riemann's scheme  
in Table \ref{tab:R-scheme}, we have 
$y_0'=e^{-2\pi\i c}$ and $y_1'=e^{2\pi\i(c-a-b)}$. 
We set 
$$M'_\infty
=(M_1M_0)^{-1}
=\begin{pmatrix}
y_0y_1e^{2\pi\i(a+b)}+e^{2\pi\i c}   & -y_0e^{2\pi\i(a+b)}\\
-y_1e^{2\pi\i(a+b-c)}   & e^{2\pi\i(a+b-c)}
\end{pmatrix},
$$
which is the circuit matrix of the loop 
$(\rho_1\rho_0)^{-1}=\rho_0^{-1}\rho_1^{-1}$ with respect to $\mathbf{F}(z)$.
Since $\rho_0^{-1}\rho_1^{-1}$ is a loop turning $z=\infty$ 
once around positively, the eigenvalues of $M'_\infty$ are  
$e^{2\pi\i a}$, $e^{2\pi\i b}$ by  Riemann's scheme in Table \ref{tab:R-scheme}. 
Thus we have
$$\mathrm{tr}(M'_\infty)=
y_0y_1e^{2\pi\i (a+b)}+e^{2\pi\i c}+e^{2\pi\i(a+b-c)}=e^{2\pi\i a}+e^{2\pi\i b},
$$
which relates $y_0$ and $y_1$ as 
$$
y_1=-\frac{(e^{2\pi\i a}-e^{2\pi\i c})(e^{2\pi\i b}-e^{2\pi\i c})}
{y_0e^{2\pi\i (a+b+c)}},
$$
where $\mathrm{tr}(M'_\infty)$ denotes the trace of $M'_\infty$. 
By eliminating $y_1$ in the expression of $M'_\infty$, we obtain 
an $e^{2\pi\i b}$-eigenvector 
$$(e^{2\pi\i a}-e^{2\pi\i c}, e^{2\pi\i(a+c)} y_0)$$
of $M'_\infty$. 

On the other hand, since $\rho_0^{-1}\rho_1^{-1}$ is
the loop connecting the end of $\rho_0^{-1}$ to the start 
of $\rho_1^{-1}$, it is homotopic to a loop $\rho'_\infty$ 
starting from $\dot z$ approaching $\infty$ via the lower-half plane 
of $Z$, turning $z=\infty$ once around positively, and 
tracing back to $\dot z$, see Figure \ref{fig:path}.
Note the path $\ell_{01}$ from $t=0$ to 
$t=1$ via the upper-half plane of the $t$-space is not involved 
in the movement of $z$ along $\rho'_\infty$, and that 
$\arg(t-z)$ on $\ell_{01}$ decreases $2\pi$ 
by the continuation along $\rho'_\infty$. 
Thus $\ds{\int_{\ell_{01}} u(t)dt}$ 
is an $e^{2\pi\i b}$-eigen function under the analytic continuation 
along $\rho'_\infty$, where the branch of $u(t)$ on $\ell_{01}$ is assigned 
by the continuation of that on the open interval $(1,\infty)$ 
defining $f_2(z)$ via the upper-half plane of the $t$-space. 
By Cauchy's integral theorem,  
this integral is expressed as a linear combination 
$$-f_1(z) -f_2(z)=-(1,1)\mathbf{F}(z).$$

Since each of $(e^{2\pi\i a}-e^{2\pi\i c}, e^{2\pi\i(a+c)} y_0)$ 
and $(1,1)$ is an $e^{2\pi\i b}$-eigenvector of $M'_\infty$, 
they are parallel. Hence we have 
$$e^{2\pi\i (a+c)} y_0=e^{2\pi\i a}-e^{2\pi\i c}
\Leftrightarrow y_0=e^{-2\pi\i c}-e^{-2\pi\i a}. 
$$
This expression of $y_0$ and the relation between $y_0$ and $y_1$ yield 
\eqref{eq:Cir-Mat}.
\begin{figure}[hbt]
\includegraphics[width=12cm]{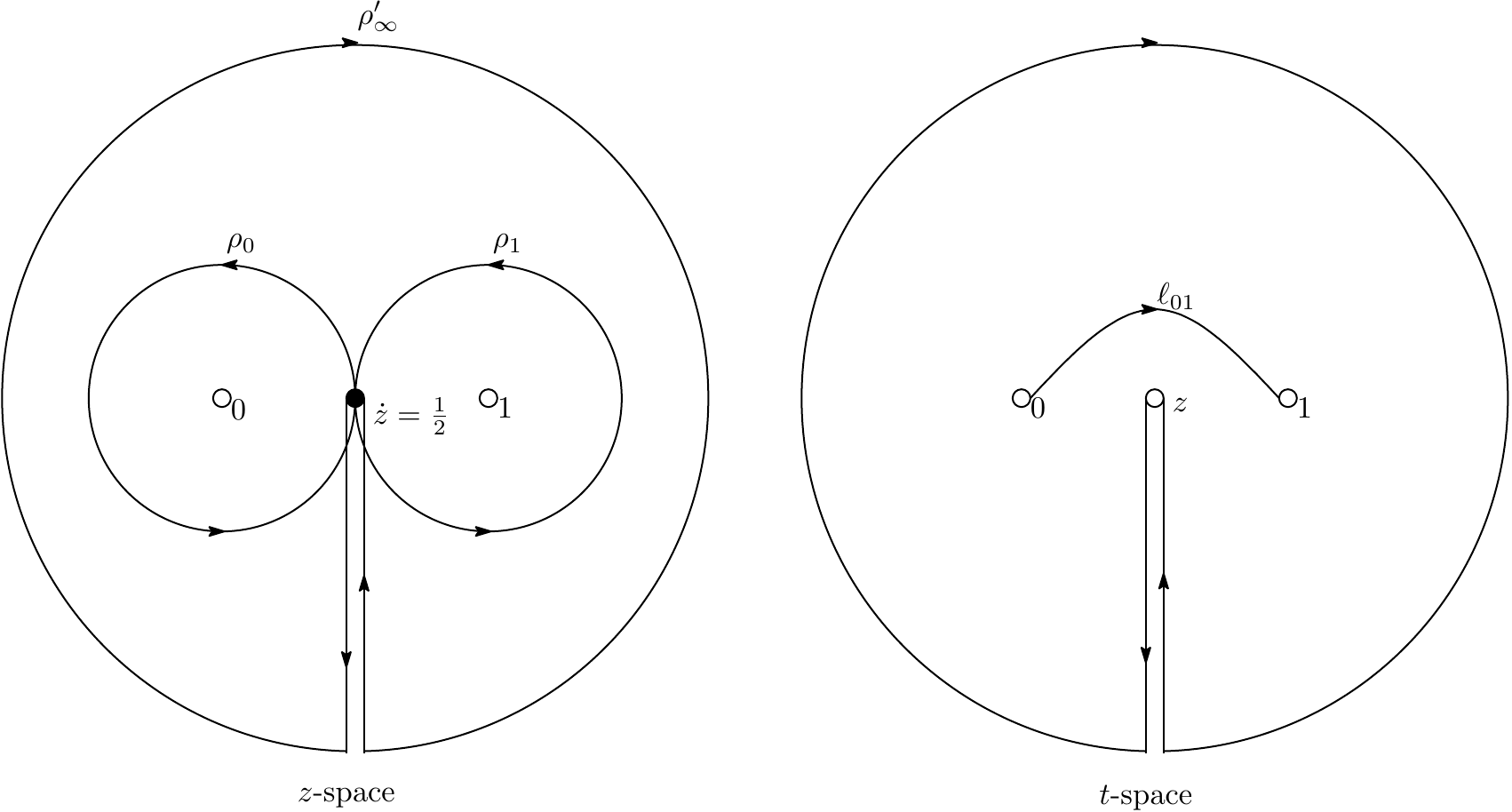} 
\caption{Loops in $Z$ and the path $\ell_{01}$}
\label{fig:path} 
\end{figure}
\end{proof}

\begin{remark}
\label{rem:eigenvects}
The integrals 
$$\int_0^z u(t)dt,\quad \int_z^1 u(t)dt$$
are an eigen function of the circuit transformation along $\rho_0$ 
with eigenvalue $e^{-2\pi\i c}$ and that along $\rho_1$ 
with eigenvalue $e^{2\pi\i(c-a-b)}$, respectively, where 
branches of $u(t)$ on $(0,z)$ and $(z,1)$ are assigned as the 
analytic continuations of that on $(0,1)$ via the upper-half space of 
the $t$-space. 
These are expressed by linear combinations of $f_1(z)$ and $f_2(z)$ as 
\begin{align*}
\int_0^z u(t)dt&=-\frac{e^{2\pi\i c}-1}{e^{2\pi\i b}-1}f_1(z)
+\frac{e^{2\pi\i a}-e^{2\pi\i c}}{e^{2\pi\i(a+b)}-e^{2\pi\i a}}f_2(z)
,\\
\int_z^1 u(t)dt&=\frac{e^{2\pi\i c}-e^{2\pi\i b}}{e^{2\pi\i b}-1}f_1(z)
-\frac{e^{2\pi\i (a+b)}-e^{2\pi\i c}}{e^{2\pi\i (a+b)}-e^{2\pi\i a}}f_2(z)
.
\end{align*}
\end{remark}

\begin{remark}
\label{rem:rho_infty}
In Proof of Lemma \ref{lem:monodromy}, 
we consider the loop $\rho'_\infty$ homotopic to $(\rho_1\rho_0)^{-1}$
and its circuit matrix 
$$M'_{\infty}=(M_1M_0)^{-1}=\begin{pmatrix}
e^{2\pi\i a}+e^{2\pi\i b}-e^{2\pi\i(a+b-c)} & e^{2\pi\i b}-e^{2\pi\i(a+b-c)}\\
e^{2\pi\i(a+b-c)}-e^{2\pi\i a} &e^{2\pi\i(a+b-c)}
\end{pmatrix}$$
with respect to $\mathbf{F}(z)$. 
Its eigen row vectors of eigenvalues $e^{2\pi\i a}$ and $e^{2\pi\i b}$ are 
$$(e^{2\pi\i (a+b)}-e^{2\pi\i (a+c)}, e^{2\pi\i (a+b)}-e^{2\pi\i (b+c)}),\quad 
(1,1),  
$$
respectively.
We set $\rho_\infty=(\rho_0\rho_1)^{-1}$, which is homotopic to a loop  
starting from $\dot z$ approaching $\infty$ via the upper-half plane 
of $Z$, turning $z=\infty$ once around positively, 
and tracing back to $\dot z$.  
Its circuit matrix $M_\infty$ with respect to $\mathbf{F}(z)$ is 
$$M_{\infty}=(M_0M_1)^{-1}=\begin{pmatrix}
e^{2\pi\i c} & e^{2\pi\i (c-a)}-1\\
e^{2\pi\i(a+b)}-e^{2\pi\i(a+c)} &e^{2\pi\i a}+ e^{2\pi\i b}- e^{2\pi\i c}
\end{pmatrix}.$$
Its eigen row vectors of eigenvalues $e^{2\pi\i a}$ and $e^{2\pi\i b}$ are 
$$(e^{2\pi\i (a+b)}-e^{2\pi\i (a+c)}, e^{2\pi\i a}-e^{2\pi\i c}),\quad 
(e^{2\pi\i a},1),  
$$
respectively.
\end{remark}

\begin{remark}
\label{rem:discontinuous}
We can make the analytic continuation of the hypergeometric series $F(a,b,c;z)$ 
to the simply connected domain $\C-[1,\infty)$ as a solution to $\cE(a,b,c)$.
We use the same symbol $F(a,b,c;z)$ for this continuation, 
which is a single-valued holomorphic function on $\C-[1,\infty)$. 
\end{remark}

\begin{remark}
We set $\ds{B(a,c-a)F(a,b,c;z)=\int_1^\infty u(t)dt}$ in the second entry of 
the column vector $\mathbf{F}(z)$ for the compatibility of 
the monodromy representation with the action of linear fractional 
transformations on the upper-half plane $\H$.
\end{remark}

\begin{definition}[Schwarz's map]
\label{def:Schwarz's map}
For a basis $\tr(\f_1(z),\f_2(z))$ of 
$\mathcal{F}_{\dot z}(a,b,c)$,  
we have a map from a neighborhood of $\dot z=\frac{1}{2}$ 
to the complex projective line $\P^1$ by 
\begin{equation}
\label{eq:Schwarz}
\f :z\mapsto \f(z)=\frac{\f_1(z)}{\f_2(z)}
\end{equation}
Schwarz's map is define by its analytic continuation to $Z=\C-\{0,1\}$, 
which is also denoted by $\f$. 
Though Schwarz's map $\f$ is regarded as multi-valued on $Z$,  
its restriction to a simply connected domain $\C-(-\infty,0]-[1,\infty)$ 
is single valued. 
\end{definition}

\begin{fact}
\label{fact:Schwarz}
If the parameters $a,b,c$ are real and satisfy
$$|1-c|+|c-a-b|+|a-b|<1,$$
then there exists a basis $\f_1(z),\f_2(z)$ of $\mathcal{F}_{\dot z}(a,b,c)$
such that the image of Schwarz's map $\f$ is in the upper-half plane 
$\H=\{\tau\in \C\mid \im(\tau)>0\}$.  
For this basis, the image of the upper-half plane in $Z=\C-\{0,1\}$ 
under Schwarz's map $\f$ becomes 
a triangle with angles $|1-c|\pi$, $|c-a-b|\pi$, $|a-b|\pi$ 
with respect to the hyperbolic metric on $\H$, 
which is called Schwarz's triangle and denoted by $\f(\H)$. 
Moreover, $a,b,c$ satisfy 
$$\frac{1}{|1-c|},\frac{1}{|c-a-b|},\frac{1}{|a-b|}\in 
\{2,3,4,\dots\}\cup\{\infty\}$$
then the image of Schwarz's map is an open dense subset in $\H$, 
and its inverse is single valued. 
The projectivization of the monodromy representation of $\cE(a,b,c)$ 
is represented as a discrete subgroup of 
$\mathrm{PSL}_2(\R)=\mathrm{SL}_2(\R)/\{\pm I_2\}$ generated 
by two elements $g_0$ and $g_1$ with 
$$\mathrm{ord}(g_0)=\frac{1}{|1-c|}, \quad 
\mathrm{ord}(g_1)=\frac{1}{|c-a-b|}, \quad 
\mathrm{ord}(g_\infty)=\frac{1}{|a-b|},
$$
where $I_2$ is the unit matrix of size $2$, $g_\infty=(g_0g_1)^{-1}$, 
and $\mathrm{ord}(g_0)$ denotes the order of $g_0$.
\end{fact}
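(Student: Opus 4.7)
The plan is the classical Schwarz reflection construction. I first fix any basis $\f_1,\f_2$ of $\mathcal{F}_{\dot z}(a,b,c)$ and study $\f=\f_1/\f_2$ on the upper-half plane of $Z$, which I temporarily denote by $Z_+$. The Wronskian identity \eqref{eq:Wronskian1} shows that $\f$ is locally univalent on $Z_+$; because $a,b,c$ are real, $\f$ extends continuously and real-analytically across each open real interval $(-\infty,0)$, $(0,1)$, $(1,\infty)$.

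Next I analyse $\f$ near each singular point $z_0\in\{0,1,\infty\}$ using Riemann's scheme (Table \ref{tab:R-scheme}). A local basis of solutions at $z_0$ has the form $(z-z_0)^{e_{z_0,1}}f_{z_0,1}(z)$ and $(z-z_0)^{e_{z_0,2}}f_{z_0,2}(z)$ with $f_{z_0,i}$ holomorphic and non-vanishing at $z_0$ (and $1/z$ in place of $z-z_0$ when $z_0=\infty$). After post-composition with a suitable element of $\PGL_2(\C)$, $\f$ therefore equals a branch of $(z-z_0)^{e_{z_0,2}-e_{z_0,1}}$ near $z_0$, and the image of a small upper-half neighborhood of $z_0$ is a circular sector of opening angle $|e_{z_0,2}-e_{z_0,1}|\pi$, which equals $|1-c|\pi$, $|c-a-b|\pi$, $|a-b|\pi$ according as $z_0=0,1,\infty$. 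Combining these local pictures with real-analyticity of $\f$ on each open real interval shows that $\f(Z_+)$ is a curvilinear triangle in $\P^1$ bounded by three arcs of generalized circles and with vertex angles as stated. The hypothesis $|1-c|+|c-a-b|+|a-b|<1$ forces the angle sum below $\pi$, so there is a hyperbolic triangle in $\H$ realizing these angles, unique up to $\PSL_2(\R)$; the $\PGL_2(\C)$-transformation of the target $\P^1$ that carries the curvilinear triangle onto this hyperbolic triangle determines a basis $\f_1,\f_2$ for which $\f(Z_+)\subset\H$, and this is the Schwarz triangle $\f(\H):=\f(Z_+)$.

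For the monodromy claim, I would invoke the Schwarz reflection principle: analytic continuation of $\f$ across each real interval sends the lower-half $z$-plane to the reflection of $\f(\H)$ across the corresponding geodesic side. Hence the circuit transformation along $\rho_0$ (respectively $\rho_1$) is the composition of two consecutive side reflections, i.e., a hyperbolic rotation about $\f(0)$ (respectively $\f(1)$) by twice the corresponding vertex angle, and the projective monodromy lies in $\PSL_2(\R)$ with generators $g_0,g_1$ of orders $1/|1-c|$, $1/|c-a-b|$. Under the integrality hypothesis on the reciprocal angles, Poincar\'e's polygon theorem implies that the $\PSL_2(\R)$-translates of the double triangle tile an open dense subset of $\H$ without overlap; this yields discreteness of the projective monodromy, density of $\f(\H)$ in $\H$, and single-valuedness of $\f^{-1}$. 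The relation $g_\infty=(g_0g_1)^{-1}$ and the order of $g_\infty$ follow from $\rho_\infty=(\rho_0\rho_1)^{-1}$ in $\pi_1(Z,\dot z)$ together with the local analysis at $\infty$.

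The main obstacle is the global consistency of the three local linearizations: separately at each vertex a distinct element of $\PGL_2(\C)$ puts $\f$ into the form of a power function, but to realize the image as a single hyperbolic triangle one must find one Möbius transformation that simultaneously straightens all three boundary arcs into geodesics of a common copy of $\H$. This is precisely where the angle-sum condition enters: it guarantees the existence and uniqueness up to hyperbolic isometry of a target hyperbolic triangle with the prescribed angles, and this uniqueness is what forces the three local pictures to fit together globally.
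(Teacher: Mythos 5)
This statement is presented in the paper as a \emph{Fact}, i.e.\ a classical result quoted without proof (the surrounding facts in \S4 are attributed to Yoshida's book [Yo]); there is therefore no in-paper argument to compare yours against. Your sketch is the standard classical proof --- local exponent analysis at $0,1,\infty$, the Schwarz reflection principle, and Poincar\'e's polygon theorem --- and it is essentially correct in outline. Two places where a full write-up would need more than you give: first, to conclude that $\f(Z_+)$ \emph{is} the open curvilinear triangle (rather than merely contained in a region bounded by the three arcs) one needs the boundary-correspondence step, namely that $\f$ extends continuously to the closure of $Z_+$ in $\P^1$, maps its boundary homeomorphically onto the Jordan curve formed by the three arcs, and hence maps $Z_+$ conformally onto the interior; second, your ``uniqueness of the hyperbolic triangle forces the local pictures to fit'' is really a stand-in for the precise geometric fact that three circles meeting pairwise at angles with sum less than $\pi$ admit a common orthogonal circle, and the M\"obius map sending the disk it bounds to $\H$ is the normalization that turns all three arcs into geodesics simultaneously. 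With those two points made explicit, the argument is complete, including the parabolic case $|1-c|=0$ (relevant here since $c=1$ throughout the paper), where the vertex is a cusp and $g_0$ has infinite order.
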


\section{Fundamental properties of $\h_{pq}(\tau)$}
We next introduce theta constants and their properties by referring to 
\cite{Mu}.
\begin{definition}
The theta constant $\h_{pq}(\tau)$ with characteristics $p,q$  
is defined by 
\begin{equation}
\label{eq:theta}
\h_{pq}(\tau)=\sum_{n=-\infty}^\infty \exp\big(\pi\i(n+\frac{p}{2})^2\tau
+2\pi\i(n+\frac{p}{2})\frac{q}{2}\big),
\end{equation}
where 
$\tau$ is a variable in the upper-half plane 
$\H$, and $p,q$ are parameters taking the value $0$ or $1$.
It converges absolutely and uniformly on any compact set in 
$\H$ for any fixed $p,q$.
\end{definition}

There are four functions $\h_{00}(\tau)$, $\h_{01}(\tau)$, 
$\h_{10}(\tau)$, $\h_{11}(\tau)$; 
the last one vanishes identically on $\H$, and the rests 
satisfy  Jacobi's identity
\begin{equation}
\label{eq:J-Id}
\h_{00}(\tau)^4=\h_{01}(\tau)^4+\h_{10}(\tau)^4
\end{equation}
for any $\tau\in \H$. 

The group $\SL_2(\Z)$ acts on $\H$ by the linear fractional transformation 
$$g\cdot \tau =\frac{g_{11}\tau+g_{12}}{g_{21}\tau+g_{22}},$$ 
where $g=(g_{ij})\in \SL_2(\Z)$ and $\tau\in \H$.
Note that $(-I_2)\cdot \tau=\tau$ for any $\tau \in \H$ and that 
the projectivization
$\PSL_2(\Z)=\SL_2(\Z)/\{\pm I_2\}$ acts effectively on $\H$. 

The group $\SL_2(\Z)$ is generated by two elements
\begin{equation}
T=\begin{pmatrix} 1 & 1 \\ 0 & 1\end{pmatrix},
\quad 
J=\begin{pmatrix} 0 & 1 \\ -1 & 0\end{pmatrix}.
\end{equation}
We give transformation formulas of $\h_{pq}(\tau)^2$ under the actions 
$T$ and $J$ on $\tau$:
$$T\cdot \tau=\tau+1,\quad J\cdot \tau=\frac{-1}{\tau}.$$ 
\begin{fact}[{\cite[ Table V in p.36]{Mu}}] 
\label{fact:SL2act}
We have
$$\h_{00}(\tau+1)^2=\h_{01}(\tau)^2,\quad 
\h_{01}(\tau+1)^2=\h_{00}(\tau)^2,\quad 
\h_{10}(\tau+1)^2=\i \h_{10}(\tau)^2,
$$
$$\h_{00}(\frac{-1}{\tau})^2=(-\i\tau)\h_{00}(\tau)^2,\quad 
\h_{01}(\frac{-1}{\tau})^2=(-\i\tau)\h_{10}(\tau)^2,\quad 
\h_{10}(\frac{-1}{\tau})^2=(-\i\tau)\h_{01}(\tau)^2.
$$
\end{fact}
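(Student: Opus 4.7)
The plan is to prove the three $T$-identities by elementary manipulation of the defining series \eqref{eq:theta}, and the three $J$-identities by Poisson summation (equivalently, Jacobi's classical transformation formula for the theta function with characteristics); both groups of identities are classical and \cite{Mu} states them without proof.

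For $\tau\mapsto\tau+1$ I would substitute into \eqref{eq:theta} and pull out the extra factor $e^{\pi\i(n+p/2)^2}$ from each summand. When $p=0$ this factor equals $(-1)^{n^2}=(-1)^n$, which combines with the existing phase $e^{\pi\i nq}$ to shift $q\mapsto q+1\pmod 2$, producing $\h_{00}(\tau+1)=\h_{01}(\tau)$ and $\h_{01}(\tau+1)=\h_{00}(\tau)$. When $p=1$, expanding $(n+\tfrac12)^2=n(n+1)+\tfrac14$ and using $n(n+1)\in 2\Z$ makes the factor equal to the $n$-independent constant $e^{\pi\i/4}$, so $\h_{10}(\tau+1)=e^{\pi\i/4}\h_{10}(\tau)$. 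Squaring these three identities delivers the first group of formulas.

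For $\tau\mapsto -1/\tau$ the core input is the identity
$$\h_{pq}(-1/\tau)=\sqrt{-\i\tau}\,\h_{qp}(\tau)\qquad((p,q)\ne(1,1)),$$
which I would prove by applying Poisson summation to the Schwartz function $f(x)=\exp(-\pi\i(x+p/2)^2/\tau+\pi\i(x+p/2)q)$ (convergent because $\re(-\i/\tau)>0$ for $\tau\in\H$). Completing the square in the Gaussian integral yields $\hat f(\xi)=\sqrt{-\i\tau}\,e^{\pi\i p\xi}\exp(\pi\i\tau(q-2\xi)^2/4)$; summing over $\xi=k\in\Z$ and relabeling $k\mapsto -m$ regroups the result as $\sqrt{-\i\tau}\,\h_{qp}(\tau)$, after identifying the factor $(-1)^{pm}$ with the characteristic shift in \eqref{eq:theta}. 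Squaring removes the branch ambiguity of $\sqrt{-\i\tau}$ and produces the factor $-\i\tau$, giving the remaining three formulas for $(p,q)\in\{(0,0),(0,1),(1,0)\}$. The only genuine work is the Poisson-summation identity with correct tracking of the characteristic permutation; the rest of the argument is bookkeeping.
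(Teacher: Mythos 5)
Your proposal is correct. Note that the paper offers no proof of this statement at all: it is labelled a Fact and justified solely by the citation to Table V of Mumford's \emph{Tata Lectures on Theta I}, so there is no internal argument to compare against. What you supply is the standard classical derivation that underlies that table: the $\tau\mapsto\tau+1$ identities by pulling the factor $e^{\pi\i(n+p/2)^2}$ out of the defining series (with $(-1)^{n^2}=(-1)^n$ effecting the swap $q\mapsto q+1$ when $p=0$, and $n(n+1)\in 2\Z$ giving the constant $e^{\pi\i/4}$ when $p=1$, hence $\i$ after squaring), and the $\tau\mapsto -1/\tau$ identities via Poisson summation, where the Fourier transform computation and the relabeling $k\mapsto -m$ correctly produce $\h_{pq}(-1/\tau)=\sqrt{-\i\tau}\,\h_{qp}(\tau)$ for $(p,q)\ne(1,1)$ (the extra phase $e^{\pi\i pq/2}$ appearing in the general transformation is $1$ in all three relevant cases since $pq=0$). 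Squaring then yields exactly the six displayed formulas. Your self-contained argument is sound and, if anything, more informative than the paper's bare citation; the only thing the citation buys is brevity.
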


The theta constant $\h_{00}(\tau)$ is quasi invariant under the 
action of the subgroup 
$$\Gamma_{12}=\{g=(g_{ij})\in \mathrm{SL}_2(\Z)\mid g_{11}g_{12}\equiv 
g_{21}g_{22}\equiv 0 \bmod 2\}
$$
of $\SL_2(\Z)$.

\begin{fact}[{\cite[Theorem 7.1, Table V in p.36]{Mu}}]
\label{fact:trans-theta}
Let $g=(g_{ij})$ be an element in the projectivization $\mathrm{P}\Gamma_{12}$
of 
$\Gamma_{12}$. 
By multiplying $-1$ to $g$ if necessary, 
we may assume that $g$ satisfies either 
$g_{21}>0$, or 
$g_{21}=0$ and $g_{22}>0$. 
Then we have 
$$\h_{00}(g\cdot \tau)^2
=\chi(g)\cdot (g_{21}\tau+g_{22})\h_{00}(\tau)^2
$$
for any $\tau\in \D_{12}$, where 
$$\chi(g)=\left\{
\begin{array}{lcc} \i^{g_{22}-1} &\textrm{if} & g_{21}\in 2\Z,
g_{22}\notin 2\Z,\\
\i^{-g_{21}} &\textrm{if} & g_{21}\notin 2\Z,g_{22}\in 2\Z.
\end{array}
\right.
$$
\end{fact}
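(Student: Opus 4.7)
My plan is to reduce the formula to the two generators of $\mathrm{P}\Gamma_{12}$, verify it there, and extend by a cocycle argument. First I would show $\mathrm{P}\Gamma_{12}=\langle T^2, J\rangle$. A parity analysis of the defining conditions (using $\det g=1$) shows that $\Gamma_{12}$ consists exactly of those $g\in\SL_2(\Z)$ congruent modulo $2$ to either $I_2$ or $\begin{pmatrix}0&1\\1&0\end{pmatrix}$; hence $\Gamma(2)\subset\Gamma_{12}$ with index $2$, and $[\SL_2(\Z):\Gamma_{12}]=3$. Combining the standard presentation $\Gamma(2)=\langle -I_2, T^2, JT^2J^{-1}\rangle$ with $J\in\Gamma_{12}\setminus\Gamma(2)$ yields $\Gamma_{12}=\langle -I_2, T^2, J\rangle$, so $\mathrm{P}\Gamma_{12}=\langle T^2, J\rangle$.

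Next I would verify the formula on the two generators directly from Fact \ref{fact:SL2act}. For $g=T^2$ (already normalized, $g_{21}=0$, $g_{22}=1$), the formula predicts $\chi(T^2)\cdot 1=\i^0=1$, and iterating $\h_{00}(\tau+1)^2=\h_{01}(\tau)^2$ with $\h_{01}(\tau+1)^2=\h_{00}(\tau)^2$ gives $\h_{00}(\tau+2)^2=\h_{00}(\tau)^2$, as desired. For $g=J$, normalization forces us to use $-J$, for which $g_{21}=1$ is odd and $g_{22}=0$ is even; the formula predicts $\chi(-J)\cdot\tau=\i^{-1}\tau=-\i\tau$, exactly matching $\h_{00}(-1/\tau)^2=-\i\tau\,\h_{00}(\tau)^2$ in Fact \ref{fact:SL2act}.

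To extend to all of $\mathrm{P}\Gamma_{12}$, I would write an arbitrary element as a word in $T^2, J, T^{-2}, J^{-1}$ and induct on the word length. The automorphy factor $g_{21}\tau+g_{22}$ satisfies the standard weight-one cocycle identity, so the inductive step reduces to verifying the compatible relation $\chi(gh)=\varepsilon(g,h)\,\chi(g)\chi(h)$, where $\varepsilon(g,h)\in\{\pm 1\}$ is the sign introduced by renormalizing $gh$ to the representative required by the hypothesis ($g_{21}>0$, or $g_{21}=0$ and $g_{22}>0$). This reduces to a finite case analysis over the parity patterns $(g\bmod 2,\, h\bmod 2)\in\{I_2,J\}^2$, combined with the renormalization signs; the constraint that each factor lies in $\Gamma_{12}$ forces at most one odd entry in each column of $g$ and $h$, which keeps the casework short.

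The main obstacle, as is typical for theta transformation laws, is the bookkeeping with fourth roots of unity: the piecewise definition of $\chi$ must be matched consistently with the sign that appears when $-(gh)$, rather than $gh$, becomes the normalized representative, and the two parity clauses for $\chi$ must interact correctly with one another under matrix multiplication. Once this cocycle check is carried out, the formula propagates from the two generators to all of $\mathrm{P}\Gamma_{12}$ by induction on word length.
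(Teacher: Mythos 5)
The paper does not actually prove this statement: it is quoted as a Fact directly from Mumford \cite{Mu} (Theorem 7.1 and Table V), so there is no internal argument to compare against. Your proposal is therefore a genuinely different route --- a self-contained derivation from Fact \ref{fact:SL2act}, which is itself just the Table V data for $T$ and $J$. The skeleton is the standard and correct one: $\mathrm{P}\Gamma_{12}=\langle T^2,J\rangle$ does hold (your index computation $[\SL_2(\Z):\Gamma_{12}]=3$, $[\Gamma_{12}:\Gamma(2)]=2$ via reduction mod $2$ is right, and since $J^2=-I_2$ the extra generator $-I_2$ is even redundant), and your verification of the formula on $T^2$ and on the normalized representative $-J$ is correct. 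What this buys over the paper's citation is a proof of the full transformation law using only the two special cases the paper already quotes.

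The step I would push back on is the inductive cocycle check. You propose to settle $\chi(gh)=\varepsilon(g,h)\,\chi(g)\chi(h)$ by a finite case analysis over the parity patterns $(g\bmod 2,\,h\bmod 2)\in\{I_2,J\}^2$. That is too coarse: $\chi$ takes values in the fourth roots of unity and depends on the bottom row modulo $4$, not modulo $2$ (for instance $\i^{g_{22}-1}$ equals $+1$ or $-1$ according as $g_{22}\equiv 1$ or $3 \bmod 4$), and the renormalization sign $\varepsilon(g,h)$ depends on the actual signs of the entries of $gh$, which are not congruence data at all. A case analysis indexed by residues mod $2$ therefore cannot determine either side of the identity you need. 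The repair is to use the structure your induction already provides: take $h$ to be a generator only. Right multiplication by $T^{\pm2}$ sends the bottom row $(g_{21},g_{22})$ to $(g_{21},g_{22}\pm 2g_{21})$, which preserves the normalization and, because $2g_{21}\equiv 0\bmod 4$ exactly in the case where $\chi$ reads off $g_{22}$, preserves $\chi$; right multiplication by $J^{\pm1}$ sends it to $(\mp g_{22},\pm g_{21})$, and there one must split according to $\mathrm{sgn}(g_{22})$ together with the classes of $g_{21},g_{22}$ mod $4$, using $\det g=1$ to handle the degenerate cases $g_{21}=0$ and $g_{22}=0$. Organized this way the verification does close up and the induction goes through, but as written the reduction to parities mod $2$ is a genuine gap in the key step.
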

Let $\Gamma(2)$ be the principal congruence subgroup 
of $\SL_2(\Z)$ of level $2$, i.e., 
$$\Gamma(2)=\{g=(g_{ij})\in \SL_2(\Z)\mid 
g_{11}-1,g_{22}-1,g_{12},g_{21}\equiv 0\bmod 2\}.
$$ 
The group $\Gamma(2)$ 
is generated by 
$T^2=\begin{pmatrix} 1 & 2 \\ 0 & 1\end{pmatrix}$, 
$J^{-1}T^2J= \begin{pmatrix} 1 & 0 \\ -2 & 1\end{pmatrix}$
and $-I_2$. 
Facts \ref{fact:SL2act} and \ref{fact:trans-theta} imply the following. 
\begin{fact}
\label{fact:Gamma2-inv}
The theta constants 
$\h_{pq}(\tau)$ $((p,q)=(0,0),(0,1),(1,0))$ satisfy 
$$\h_{pq}(g\cdot \tau)^4=(g_{21}\tau+g_{22})^2\h_{pq}(\tau)^4$$ 
for any $g=(g_{ij})\in \Gamma(2)$.  
%

\end{fact}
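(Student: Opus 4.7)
The plan is to reduce each of the three identities to the already-established transformation law for $\h_{00}^2$ on $\Gamma_{12}$ (Fact \ref{fact:trans-theta}), exploiting the $T$- and $J$-action formulas of Fact \ref{fact:SL2act} together with the normality of $\Gamma(2)$ in $\SL_2(\Z)$. First I dispose of the case $(p,q)=(0,0)$ directly. For $g=(g_{ij})\in\Gamma(2)$ one has $g_{21}\equiv 0$ and $g_{22}\equiv 1\pmod 2$, so $g\in\Gamma_{12}$. After replacing $g$ by $-g$ if necessary to meet the sign convention of Fact \ref{fact:trans-theta}, that fact gives
$$\h_{00}(g\cdot\tau)^2=\chi(g)(g_{21}\tau+g_{22})\h_{00}(\tau)^2,\qquad \chi(g)=\i^{g_{22}-1}.$$
Since $g_{22}$ is odd, $g_{22}-1$ is even, so $\chi(g)^2=(-1)^{g_{22}-1}=1$, and squaring the displayed equation yields the claim.

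For $(p,q)=(0,1)$, I would use the relation $\h_{01}(\tau)^2=\h_{00}(\tau+1)^2$ from Fact \ref{fact:SL2act}. Since $\Gamma(2)$ is normal in $\SL_2(\Z)$, the element $TgT^{-1}$ again lies in $\Gamma(2)$, and a direct matrix multiplication gives $(TgT^{-1})_{21}=g_{21}$ and $(TgT^{-1})_{22}=g_{22}-g_{21}$. Writing $g\cdot\tau+1=(TgT^{-1})\cdot(\tau+1)$ and applying the first step to $TgT^{-1}$ at the point $\tau+1$, the automorphy factor reduces to $g_{21}(\tau+1)+(g_{22}-g_{21})=g_{21}\tau+g_{22}$, so squaring
$$\h_{01}(g\cdot\tau)^2=\h_{00}((TgT^{-1})\cdot(\tau+1))^2$$
gives the desired identity for $\h_{01}^4$.

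For $(p,q)=(1,0)$ I would pipe through $\h_{01}$ via the $J$-relation of Fact \ref{fact:SL2act}, which after squaring reads $\h_{10}(\tau)^4=-\frac{1}{\tau^2}\h_{01}(-1/\tau)^4$. Normality again supplies $h:=JgJ^{-1}\in\Gamma(2)$ with $h_{21}=-g_{12}$ and $h_{22}=g_{11}$; moreover $-1/(g\cdot\tau)=h\cdot(-1/\tau)$. Applying the second step to $h$ at $-1/\tau$ and collecting all the prefactors using $g\cdot\tau=(g_{11}\tau+g_{12})/(g_{21}\tau+g_{22})$, the ratio telescopes to $(g_{21}\tau+g_{22})^2$, finishing the proof. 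The delicate point throughout is sign-tracking: the character $\chi(g)\in\{\pm 1\}$ of Fact \ref{fact:trans-theta} on $\Gamma(2)$ and the factors of $\i$ and $-\i\tau$ appearing in Fact \ref{fact:SL2act} only square to $\pm 1$, and wash out entirely only on squaring once more, which is exactly why the statement holds for fourth powers rather than for squares.
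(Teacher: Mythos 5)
Your proof is correct, and it follows exactly the route the paper intends: the paper states this Fact without proof, remarking only that it follows from Facts \ref{fact:SL2act} and \ref{fact:trans-theta}, and your argument is precisely the detailed execution of that implication (the $\Gamma_{12}$ law for $\h_{00}$, then conjugation by $T$ and $J$ to transfer to $\h_{01}$ and $\h_{10}$). The sign bookkeeping, including $\chi(g)^2=1$ on $\Gamma(2)$ and the cancellation of the $(-\i\tau)$ factors upon taking fourth powers, all checks out.
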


The group $\Gamma(2)$ is normal in $\SL_2(\Z)$, and the quotient 
$$\SL_2(\Z)/\Gamma(2)=\{I_2,W,W^2,J,T,J^{-1}TJ\},\quad 
W=(JT)^{-1}=\begin{pmatrix} -1 & -1\\ 1 & 0 \end{pmatrix}$$ 
is isomorphic to the symmetric group $S_3$ of degree $3$. 
There exists a subgroup $G$ of $\SL_2(\Z)$ such that 
the quotient $G/\Gamma(2)$ is isomorphic to 
the alternating group $A_3$ of degree $3$. 
This group is generated by $\Gamma(2)$ and $W$,
and characterized by 
$$\{g\in \SL_2(\Z)\mid g^3\in \Gamma(2)\},$$
which is denoted by $\Gamma(2)^{1/3}$. Note that 
$$[\SL_2(\Z):\Gamma(2)^{1/3}]=2,\quad [\Gamma(2)^{1/3}:\Gamma(2)]=3.$$
We give a fundamental region of $\PGa(2)^{1/3}=\Gamma(2)^{1/3}/\{\pm I_2\}$ 
and that of $\PSL_2(\Z)$ as
\begin{align}
\label{eq:fund-reg-G(1/3)}
\wt{\D}&
=\{\tau\in \H\mid -\dfrac{3}{2}< \re(\tau)\le\dfrac{1}{2},\ 
|\tau|\ge 1, \ |\tau+1|> 1\},
\\
\label{eq:fund-reg-G}
\D&
=\{\tau\in \H\mid -\dfrac{1}{2}< \re(\tau)\le \dfrac{1}{2},\ |\tau|> 1\}
\cup \{\tau\in \H\mid |\tau|=1, 0\le \re(\tau)\le \dfrac{1}{2}\},
\end{align}
see Figure \ref{fig:FD-SL2Z}.
\begin{figure}[htb]
\includegraphics[width=10cm]{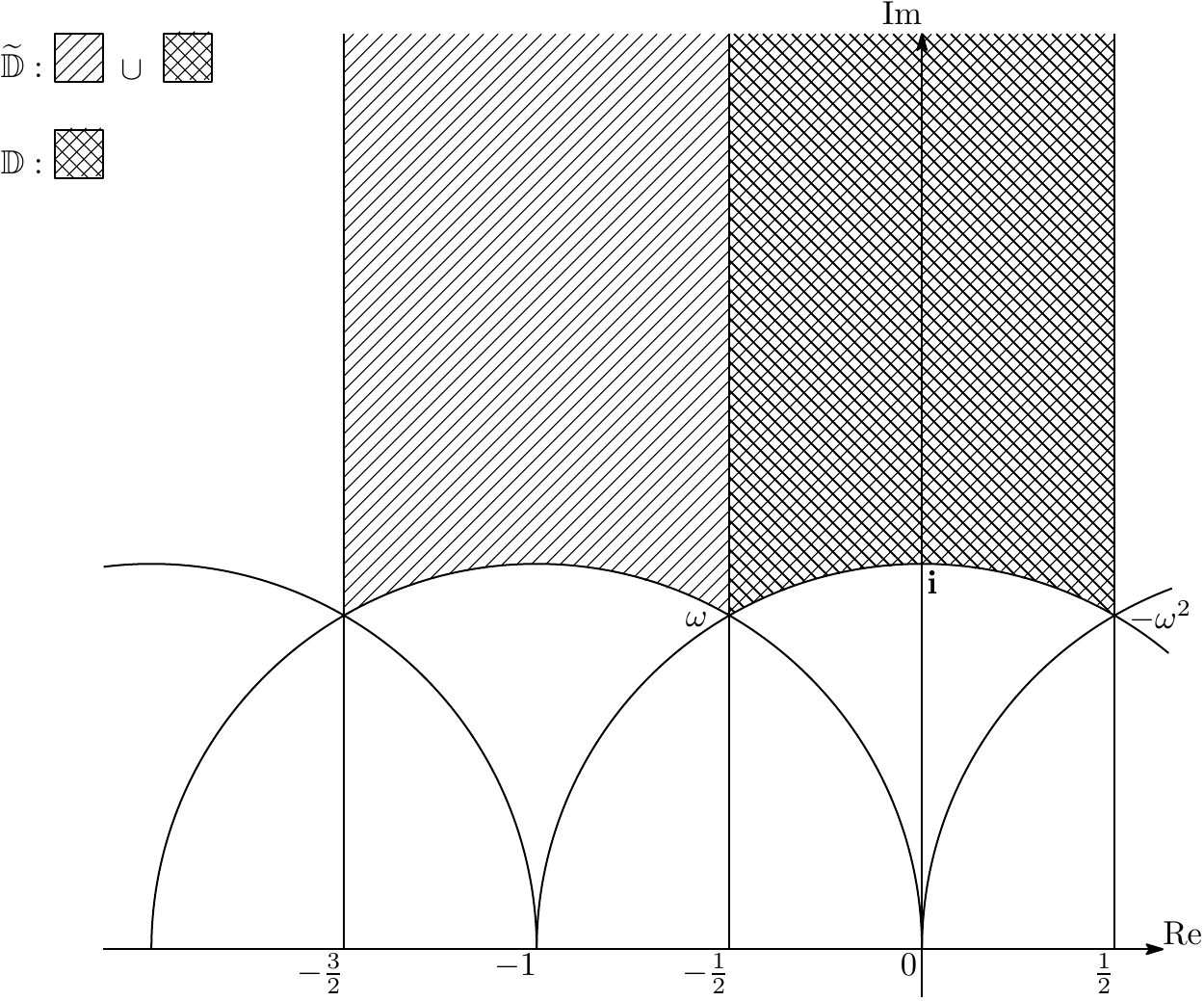}  
\caption{Fundamental regions $\wt\D$ and $\D$ 
for $\PGa(2)^{1/3}$ and $\PSL_2(\Z)$} 
\label{fig:FD-SL2Z}
\end{figure}


\begin{lemma} 
\label{lem:W-action}
By the actions of $W=\begin{pmatrix} -1 & -1\\ 1 & 0 \end{pmatrix}$ and 
$W^2=\begin{pmatrix} 0 & 1\\ -1 & -1 \end{pmatrix}
$ on $\tau\in \H$, $\h_{00}(\tau)^2$, $\h_{01}(\tau)^2$ and $\h_{10}(\tau)^2$ 
are transformed into 
\begin{equation}
\label{eq:W-action}
\begin{array}{ll}
\h_{00}(W\cdot\tau)^2=-\i \tau \cdot\h_{10}(\tau)^2, &
\h_{00}(W^2\cdot\tau)^2=-\i(\tau+1)\cdot\h_{01}(\tau)^2, 
\\
\h_{01}(W\cdot\tau)^2=-\i \tau \cdot\h_{00}(\tau)^2, &
\h_{01}(W^2\cdot\tau)^2=(\tau+1) \cdot\h_{10}(\tau)^2, 
\\
\h_{10}(W\cdot\tau)^2=-\tau \cdot \h_{01}(\tau)^2, &
\h_{10}(W^2\cdot\tau)^2=-\i(\tau+1)\cdot\h_{00}(\tau)^2.
\end{array}
\end{equation}
\end{lemma}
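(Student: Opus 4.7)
The plan is to decompose the actions of $W$ and $W^2$ on $\H$ into successive applications of the generators $T$ and $J$, and then apply Fact~\ref{fact:SL2act} twice. Since $-I_2$ acts trivially by linear fractional transformation, $W$ and $-W=T^{-1}J$ induce the same map on $\H$, so
\[
W\cdot\tau \;=\; T^{-1}\cdot(J\cdot\tau) \;=\; -1-\tfrac{1}{\tau}.
\]
Directly on the matrix level, $W^2 = JT$, hence
\[
W^2\cdot\tau \;=\; J\cdot(T\cdot\tau) \;=\; -\tfrac{1}{\tau+1}.
\]

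Before iterating, I record the transformation law for $T^{-1}\cdot\tau=\tau-1$. Substituting $\tau\mapsto\tau-1$ in the $T$-formulas of Fact~\ref{fact:SL2act} (and solving the single equation that involves a factor of $\i$) yields
\[
\h_{00}(\tau-1)^2=\h_{01}(\tau)^2,\qquad \h_{01}(\tau-1)^2=\h_{00}(\tau)^2,\qquad \h_{10}(\tau-1)^2=-\i\,\h_{10}(\tau)^2.
\]

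For the $W$-action, I apply the $T^{-1}$-rule above to $\h_{pq}(T^{-1}\sigma)^2$ with $\sigma=J\cdot\tau=-1/\tau$, and then apply the $J$-formulas of Fact~\ref{fact:SL2act} to rewrite $\h_{pq}(-1/\tau)^2$ in terms of $\h_{pq}(\tau)^2$. For instance,
\[
\h_{00}(W\cdot\tau)^2 \;=\; \h_{01}(-1/\tau)^2 \;=\; (-\i\tau)\,\h_{10}(\tau)^2,
\]
and the two remaining entries in the left column of \eqref{eq:W-action} are obtained analogously; the factor $-\i$ from the $T^{-1}$-law for $\h_{10}$ combines with the factor $-\i\tau$ from the $J$-law to produce the stated $-\tau$ coefficient. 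For the $W^2$-action, I instead apply the $J$-formulas first (with argument $\sigma=\tau+1$) and then use the original $T$-formulas; for example,
\[
\h_{01}(W^2\cdot\tau)^2 \;=\; \h_{01}\bigl(-1/(\tau+1)\bigr)^2 \;=\; -\i(\tau+1)\,\h_{10}(\tau+1)^2 \;=\; (\tau+1)\,\h_{10}(\tau)^2,
\]
where the stray $\i$ comes from $\h_{10}(\tau+1)^2=\i\,\h_{10}(\tau)^2$. The other two entries in the right column of \eqref{eq:W-action} follow identically.

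The argument is entirely a bookkeeping exercise in combining the six scalar factors supplied by Fact~\ref{fact:SL2act}; the only place demanding any care is tracking the powers of $\i$ arising from the $T$- and $T^{-1}$-laws for $\h_{10}$, which are responsible for the signs and the occurrence of $-\i$ versus $-1$ on the right-hand sides of \eqref{eq:W-action}. No substantive obstacle arises.
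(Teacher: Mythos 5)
Your proposal is correct and follows essentially the same route as the paper: decompose $W\cdot\tau=T^{-1}\cdot(J\cdot\tau)$ and $W^2\cdot\tau=J\cdot(T\cdot\tau)$ (the paper writes $W=T^{-1}J^{-1}$ with $J^{-1}=-J$, which is the same thing projectively) and compose the transformation laws from Fact~\ref{fact:SL2act}, deriving the $T^{-1}$-rules by inverting the $T$-rules. All six scalar factors, including the powers of $\i$ coming from the $\h_{10}$ law, check out against the paper's computation.
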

\begin{proof}
By Fact \ref{fact:SL2act} together with $W=T^{-1}J^{-1}$ and $J^{-1}=-J$, 
we have 
\begin{align*}
\h_{00}(W\cdot \tau)^2&=
\h_{00}(T^{-1}\cdot (J\cdot\tau))^2=\h_{01}(J\cdot\tau)^2
=(-\i \tau)\cdot \h_{10}(\tau)^2,\\
\h_{01}(W\cdot \tau)^2&=
\h_{01}(T^{-1}\cdot(J\cdot\tau))^2=\h_{00}(J\cdot\tau)^2
=(-\i \tau)\cdot \h_{00}(\tau)^2,\\
\h_{10}(W\cdot \tau)^2&=
\h_{10}(T^{-1}\cdot(J\cdot\tau))^2
=-\i\h_{10}(J\tau)^2=-\i\cdot (-\i\tau)\cdot  \h_{01}(\tau)^2. 
\end{align*}
Similarly, we can show the 
transformations 
of $\h_{pq}(\tau)^2$ under the action of $W^2=JT$. 
\end{proof}
\begin{definition}
We define the (normalized) Eisenstein series $E_{2k}(\tau)$ of weight $2k$ 
$(k\in \N)$ by 
$$E_{2k}(\tau)=\frac{1}{2\zeta(2k)}
\sum_{n_1,n_2\in\Z^2-\{(0,0)\}}\frac{1}{(n_1\tau+n_2)^{2k}},$$
where $\tau\in \H$ and $\zeta$ denotes Riemann's zeta function. 
\end{definition}

If $k\ge 2$ then this series converges absolutely, 
and it satisfies 
$$E_{2k}(g\cdot \tau)=(g_{21}\tau+g_{22})^{2k}E_{2k}(\tau)$$
for any $g\in \SL_2(\Z)$. 

\begin{fact}[{\cite[\S15]{Mu}}]
The Eisenstein series $E_4(\tau)$ is expressed by a Fourier expansion 
and by the theta constants as  
\begin{align}
\nonumber
E_4(\tau) &=1+240\sum_{n=1}^\infty \big(\sum_{d\mid n} d^3\big) e^{2\pi\i n\tau}\\
\label{eq:Fourier}
 &=1+240e^{2\pi\i \tau}+2160(e^{2\pi\i \tau})^2+6720(e^{2\pi\i \tau})^3+\cdots\\
\label{eq:fullmodular}
&=
\frac{\h_{00}(\tau)^8+\h_{01}(\tau)^8+\h_{10}(\tau)^8}{2}. 
\end{align}
\end{fact}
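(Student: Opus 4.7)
The plan is to treat the two asserted identities separately, since the Fourier expansion \eqref{eq:Fourier} is a direct computation on $E_4$ alone, while the theta identity \eqref{eq:fullmodular} is a comparison of two weight-$4$ modular forms for $\SL_2(\Z)$.

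For \eqref{eq:Fourier}, I would start from the defining double series and peel off the row $n_1=0$, which contributes $2\z(4)$ and, after division by $2\z(4)$, produces the leading term $1$. For each $n_1\neq 0$, the inner sum over $n_2$ is evaluated by the Lipschitz formula
\[
\sum_{n_2\in\Z}\frac{1}{(n_1\tau+n_2)^4}=\frac{(2\pi)^4}{3!}\sum_{m=1}^\infty m^3\, e^{2\pi\i m n_1\tau}\qquad(n_1>0),
\]
together with the symmetry $n_1\mapsto -n_1$, which doubles the positive-$n_1$ contribution. Reindexing $(m,n_1)\mapsto n=m n_1$ converts the resulting double sum into $\sum_{n\ge 1}\sigma_3(n)\,e^{2\pi\i n\tau}$, and the identity $\z(4)=\pi^4/90$ then forces the coefficient $240$.

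For \eqref{eq:fullmodular}, the strategy is to show that
\[
\hh(\tau):=\h_{00}(\tau)^8+\h_{01}(\tau)^8+\h_{10}(\tau)^8
\]
is a weight-$4$ modular form for $\SL_2(\Z)$ which is holomorphic at the cusp, and then to invoke the one-dimensionality of the corresponding space $M_4(\SL_2(\Z))=\C\cdot E_4$. Using Fact \ref{fact:SL2act}, under $\tau\mapsto\tau+1$ the triple $(\h_{00}^8,\h_{01}^8,\h_{10}^8)$ becomes $(\h_{01}^8,\h_{00}^8,\i^4\h_{10}^8)=(\h_{01}^8,\h_{00}^8,\h_{10}^8)$, so $\hh(\tau+1)=\hh(\tau)$; under $\tau\mapsto -1/\tau$ each $\h_{pq}^2$ acquires the factor $-\i\tau$ and the pair $\h_{01}^2,\h_{10}^2$ is swapped, so raising to the fourth power yields $\hh(-1/\tau)=\tau^4\hh(\tau)$. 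Since $T$ and $J$ generate $\SL_2(\Z)$, the function $\hh/2$ lies in $\C\cdot E_4$. Comparison at the cusp $\tau\to\i\infty$, where $\h_{00},\h_{01}\to 1$ and $\h_{10}\to 0$, pins the constant of proportionality at $1$.

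The only point that demands care is the bookkeeping of the roots of unity in the $T$- and $J$-transformation laws for $\h_{pq}^2$: the key observation is that $\i^4=1$ and $(-\i\tau)^4=\tau^4$, so every phase factor collapses once $\h_{pq}^2$ is raised to the fourth power, leaving the clean weight-$4$ invariance. Otherwise both halves of the argument are essentially routine verifications, and no new input beyond Fact \ref{fact:SL2act} and the dimension formula for $M_4(\SL_2(\Z))$ is required.
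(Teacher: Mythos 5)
Your argument is correct, but note that the paper does not prove this statement at all: it is labelled a Fact and simply cited from Mumford's \emph{Tata Lectures on Theta I}, \S 15, so there is no internal proof to compare against. What you supply is the standard textbook derivation, and both halves check out. For \eqref{eq:Fourier}, splitting off the $n_1=0$ row gives $2\z(4)$, the Lipschitz formula contributes $(2\pi)^4/3!$ per positive $n_1$ (since $(-2\pi\i)^4=(2\pi)^4$), the $n_1\mapsto -n_1$ symmetry doubles this, and $\z(4)=\pi^4/90$ yields $2\cdot\frac{(2\pi)^4}{6}\big/\big(2\cdot\frac{\pi^4}{90}\big)=240$; the numerical coefficients $240\sigma_3(n)$ for $n=1,2,3$ reproduce $240,2160,6720$. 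For \eqref{eq:fullmodular}, your use of Fact \ref{fact:SL2act} is exactly right: the phases $\i$ and $-\i\tau$ become $1$ and $\tau^4$ after raising $\h_{pq}^2$ to the fourth power, and the permutations of the three theta constants under $T$ and $J$ leave the symmetric sum invariant, so $\hh/2\in M_4(\SL_2(\Z))$; one-dimensionality of that space plus the cusp values $\h_{00},\h_{01}\to1$, $\h_{10}\to0$ pin the constant. The only external inputs beyond the paper are the Lipschitz summation formula and $\dim_\C M_4(\SL_2(\Z))=1$, both standard (the latter is in the already-cited [Se]); it would be worth remarking that holomorphy of $\hh$ at the cusp follows from the $q$-expansions of the theta constants (the odd powers of $e^{\pi\i\tau}$ cancel between $\h_{00}^8$ and $\h_{01}^8$), not merely from the existence of the limit you compute.
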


\section{Schwarz's map for $(a,b,c)=(\frac{1}{2},\frac{1}{2},1)$}
To study Schwarz's map $\f_0$ for parameters 
$(a,b,c)=(\frac{1}{2},\frac{1}{2},1)$, 
we select $\mathbf{F}(z)=\tr(f_1,f_2)$ as a basis of 
$\cF_{\dot z}(\frac{1}{2},\frac{1}{2},1)$.  
Schwarz's map $\f_0$ is the analytic continuation of 
\begin{equation}
\label{eq:S-Map-1/2}
\f_0: z\mapsto \tau=\f_0(z)=\frac{f_1(z)}{f_2(z)}=\i\cdot 
\frac{F(\frac{1}{2},\frac{1}{2},1,1-z)}{F(\frac{1}{2},\frac{1}{2},1,z)}
\end{equation} 
defined on $\B_0\cap \B_1=\{z\in \C\mid |z|<1,\ |1-z|<1\}$ to $Z=\C-\{0,1\}$. 
We have 
$\big(\frac{1}{|1-c|},\frac{1}{|c-a-b|},\frac{1}{|a-b|}\big)=
(\infty,\infty,\infty)$, and the circuit matrices $M_0$, $M_1$ and 
$(M_0M_1)^{-1}$ are 
$$
M_0=\begin{pmatrix}
1 & 2 \\
0 & 1
\end{pmatrix},\quad 
M_1
=\begin{pmatrix}
1 & 0 \\
-2 & 1
\end{pmatrix},\quad 
(M_0M_1)^{-1}
=\begin{pmatrix}
 1 & -2 \\
 2 & -3
 \end{pmatrix}.
$$
The group generated by these matrices is not 
the principal congruence subgroup $\Gamma(2)$
but
$$\Gamma(2,4)=\{g=(g_{ij})\in \mathrm{SL}_2(\Z)\mid 
g_{11},g_{22}\equiv 1\bmod 4, g_{12},g_{21}\equiv 0\bmod 2\}.$$
Note that $\Gamma(2,4)$ is a subgroup in $\Gamma(2)$ of index $2$, 
and that 
$\mathrm{P}\Gamma(2)=\Gamma(2)/\{\pm I_2\}$ is isomorphic to $\Gamma(2,4)$.

Since $F(\frac{1}{2},\frac{1}{2},1,1-z)$ and 
$F(\frac{1}{2},\frac{1}{2},1,z)$ take positive real values 
for $z$ in the open interval $(0,1)$, 
$\f_0(z)$ for $z\in (0,1)$ is a pure imaginary number in $\H$.
Since the image of the monodromy representation is in $\mathrm{PSL}_2(\R)$ 
and $f_1(z)/f_2(z)$ is in $\H$ for $z\in (0,1)$, 
the image of the analytic continuation of $f_1(z)/f_2(z)$ to $Z=\C-\{0,1\}$ 
is an open dense subset in $\H$.

\begin{fact}[Schwarz's map for $(a,b,c)=(\frac{1}{2},\frac{1}{2},1)$]
\label{fact:Inv-S-map}
\cite[\S 5.6,5.7,5.8 in Chap. II, Proposition 8.1 in Chap. III]{Yo}
\begin{enumerate}
\item Schwarz's triangle $\f_0(\H)$ for $(a,b,c)=(\frac{1}{2},\frac{1}{2},1)$
is 
$$\{\tau\in \H\mid 0 <\re(\tau) <1,\ |\tau-\frac{1}{2}|>\frac{1}{2}\}$$
and its vertices $\f_0(z_0)=\lim\limits_{z\to z_0,z\in \H} \f_0(z)$
$(z_0=0,1,\infty)$ are 
$$
\f_0(0)=\i\infty, \quad \f_0(1)=0, \quad \f_0(\infty)=1.
$$
The images of the intervals $(0,1)$, $(-\infty,0)$, $(1,\infty)$ 
consisting of the boundary of $\H(\subset Z=\C-\{0,1\})$ under $\f_0$ 
are $\{\tau\in \H\mid \re(\tau)=0\}$, $\{\tau\in \H\mid \re(\tau)=1\}$, 
$\{\tau\in \H\mid |\tau-\frac{1}{2}|=\frac{1}{2}\}$, respectively.
\begin{figure}[hbt]
\begin{center}
\includegraphics[width=8cm]{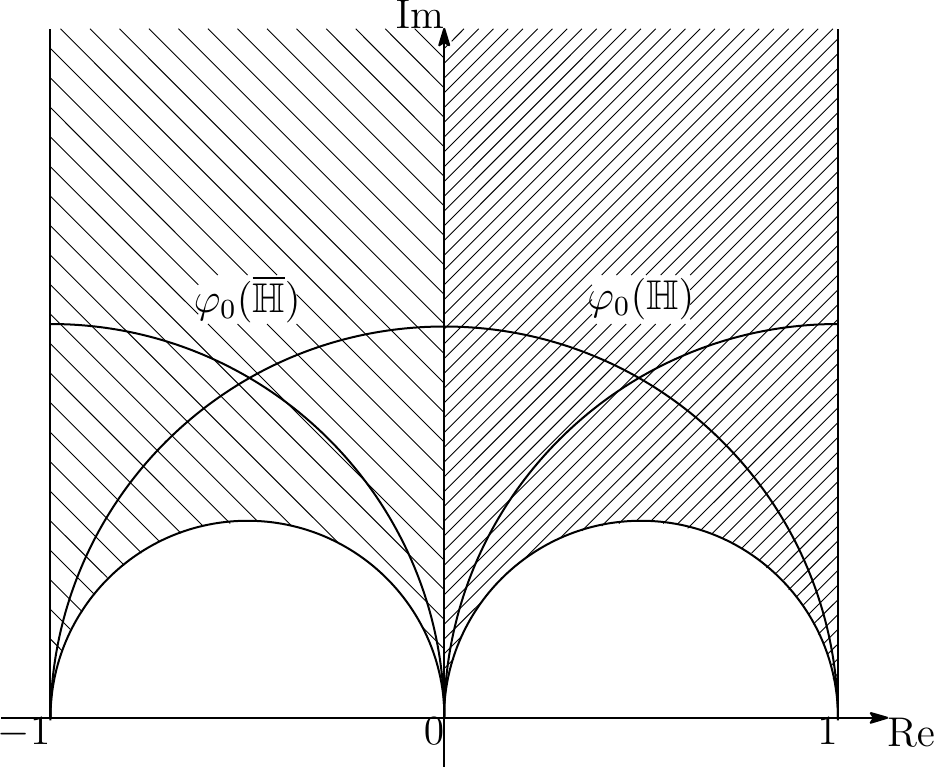}
\end{center}
\caption{Schwarz's triangle for $(a,b,c)=(\frac{1}{2},\frac{1}{2},1)$}
\end{figure}
\item 
The image $\f_0(\overline{\H})$ of the lower-half plane $\overline{\H}$ 
under $\f_0$ is the reflection of Schwarz's triangle $\f_0(\H)$ 
with respect to the imaginary axis. 
The union $\f_0(\H)^c \cup \f_0(\overline{\H})$ forms a fundamental 
region $\D(2)$ of $\PGa(2)$, where $\f_0(\H)^c$ denotes the closure 
of $\f_0(\H)$ in $\H$.

\item 
The inverse of Schwarz's map for $(a,b,c)=(\frac{1}{2},\frac{1}{2},1)$ is 
\begin{equation}
\label{eq:lambda}
\H\ni \tau \mapsto \lambda(\tau)=\frac{\h_{10}(\tau)^4}
{\h_{00}(\tau)^4}\in Z.
\end{equation} 
The function $\lambda(\tau)$ is invariant under the action 
$\tau\mapsto (g_{11}\tau+g_{12})/(g_{21}\tau+g_{22})$ 
of $g=(g_{ij})\in \PGa(2)$ on $\tau \in \H$.

\item Schwarz's map $\f_0$ and its inverse $\l$ are extended to 
isomorphisms between the complex projective line $\P^1(\supset Z=\C-\{0,1\})$ 
and the compactification $\wh{\H}/\PGa(2)$ of $\H/\PGa(2)$, where 
$\wh{\H}=\H\cup\{\i\infty\}\cup\Q$ and $\PGa(2)$ acts naturally on 
$\{\i\infty\}\cup\Q$ and there are three orbits represented by 
the vertices 
$\i\infty=\f_0(0)$, $0=\f_0(1)$, $1=\f_0(\infty)$ of Schwarz's triangle. 
These extended isomorphisms are also denoted by the same symbols $\f_0$ and 
$\l$.  

\end{enumerate}

\end{fact}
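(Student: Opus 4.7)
My plan is to establish the four parts in order, using Facts \ref{fact:Schwarz}, \ref{fact:Gamma2-inv}, and \ref{fact:SL2act} already recorded. For part (1), I would first observe that $(a,b,c)=(\frac{1}{2},\frac{1}{2},1)$ gives $|1-c|=|c-a-b|=|a-b|=0$, so Fact \ref{fact:Schwarz} applies and Schwarz's triangle $\f_0(\H)$ is an ideal hyperbolic triangle with three cusps. Since $F(\frac{1}{2},\frac{1}{2},1;z)$ and $F(\frac{1}{2},\frac{1}{2},1;1-z)$ are both positive real on $z\in(0,1)$, the defining formula \eqref{eq:S-Map-1/2} puts $\f_0((0,1))$ on the positive imaginary axis. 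The vertices I would read off from the asymptotics of $F(\frac{1}{2},\frac{1}{2},1;\cdot)$: at $z\to 0^+$, the standard logarithmic expansion gives $F(\frac{1}{2},\frac{1}{2},1;1-z)\sim -\pi^{-1}\log(z/16)\to+\infty$ while $F(\frac{1}{2},\frac{1}{2},1;z)\to 1$, hence $\f_0(0)=\i\infty$; symmetrically $\f_0(1)=0$; and a Kummer connection formula at $z=\infty$ (equivalently, cusp-elimination after the other two vertices are fixed) identifies $\f_0(\infty)=1$. The three sides being hyperbolic geodesics between these cusps then forces the triangle to be as described, and the boundary correspondence follows from continuity. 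Part (2) is an application of the Schwarz reflection principle: since $\f_0((0,1))$ lies on the imaginary axis, analytic continuation of $\f_0$ across $(0,1)$ into $\overline{\H}$ is reflection across that axis, and the union $\f_0(\H)^c\cup\f_0(\overline{\H})$ --- bounded by $\re(\tau)=\pm 1$ and the half-circles $|\tau\pm\frac{1}{2}|=\frac{1}{2}$ --- is the standard geodesic fundamental region of $\PGa(2)$ whose sides are identified by its generators $T^2$ and $J^{-1}T^2J$.

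Part (3) is the heart of the matter. The candidate inverse $\l(\tau)=\h_{10}(\tau)^4/\h_{00}(\tau)^4$ is $\Gamma(2)$-invariant by Fact \ref{fact:Gamma2-inv}, so it descends to a holomorphic function on $\H/\PGa(2)$. Using the $q$-expansions $\h_{10}(\tau)^4=16q+O(q^2)$ and $\h_{00}(\tau)^4=1+O(q)$ with $q=e^{\pi\i\tau}$, one reads off $\l(\i\infty)=0$; applying Fact \ref{fact:SL2act} to $\tau\mapsto -1/\tau$ and $\tau\mapsto\tau+1$ then yields $\l(0)=1$ and $\l(1)=\infty$. Hence both $\f_0^{-1}$, constructed from part (1), and $\l$ are $\PGa(2)$-invariant meromorphic functions on $\H$ extending continuously to the three inequivalent cusps $\i\infty,0,1$ and taking identical values there. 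Since each descends to a biholomorphism of $\wh{\H}/\PGa(2)$ onto $\P^1$ and a Möbius map is determined by three point values, the two must coincide on all of $\H$. Part (4) is then formal: the cusp matching $0\leftrightarrow\i\infty$, $1\leftrightarrow 0$, $\infty\leftrightarrow 1$ established above extends the mutual inverses $\f_0$ and $\l$ to the desired isomorphisms between $\P^1$ and $\wh{\H}/\PGa(2)$.

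The step I expect to be the main obstacle is the precise asymptotic analysis in part (1): Fact \ref{fact:G-K} does not apply at $z=1$ because $c-a-b=0$, forcing a direct invocation of the standard logarithmic expansion of $F(\frac{1}{2},\frac{1}{2},1;\cdot)$ near $1$, and pinning down $\f_0(\infty)=1$ requires either a non-trivial Kummer connection between the local bases at $0$ and at $\infty$ or a careful parabolic-fixed-point argument using the circuit matrix $M'_\infty$ at $z=\infty$. Once these vertex values are fixed, the remaining steps --- reflection in (2), $\Gamma(2)$-invariance together with cusp matching in (3), and compactification in (4) --- are essentially formal consequences.
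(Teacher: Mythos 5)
The paper does not actually prove this statement: it is imported verbatim as a Fact with a citation to Yoshida's book, so there is no internal proof to compare against. Your sketch reconstructs the standard classical argument and is correct in outline. In particular, the logarithmic asymptotics at the cusps $z=0,1$ (where, as you note, Fact \ref{fact:G-K} is inapplicable because $c-a-b=0$), the identification of $\f_0(\infty)=1$ with the parabolic fixed point of $(M_0M_1)^{-1}=\left(\begin{smallmatrix}1&-2\\2&-3\end{smallmatrix}\right)$, the Schwarz reflection across the imaginary axis for part (2), and the $\Gamma(2)$-invariance of $\l$ via Fact \ref{fact:Gamma2-inv} for part (3) are all sound, and the side pairings you describe do give the standard fundamental region of $\PGa(2)$.

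The one step you assert without justification is that $\l$ \emph{descends to a biholomorphism} of $\wh{\H}/\PGa(2)$ onto $\P^1$. Matching the three cusp values $0,1,\infty$ only determines a map within the class of M\"obius transformations, so unless you know a priori that the descended $\l$ has degree one, the comparison with $\f_0^{-1}$ is not yet forced; a priori $\l$ could be a higher-degree $\PGa(2)$-invariant function taking the right values at the cusps. The standard repair is a valence count: from $\h_{10}(\tau)^4=16q+O(q^2)$ and $\h_{00}(\tau)^4=1+O(q)$ with $q=e^{\pi\i\tau}$, the function $\l$ has a simple zero at the cusp $\i\infty$ and no zero on $\H$ itself (the theta constants $\h_{00},\h_{10}$ are nonvanishing there), so on the compact genus-zero surface $\wh{\H}/\PGa(2)$ its divisor of zeros consists of a single simple point and $\l$ therefore has degree one. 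With that inserted, your three-point argument closes parts (3) and (4), and the remainder of the proposal stands as a faithful, if compressed, version of the classical proof the cited reference supplies.
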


\begin{lemma}
\label{lem:midpt}
\begin{enumerate} 
\item 
The function $\l$ is transformed into 
$\l(T\cdot \tau)=\dfrac{\l(\tau)}{\l(\tau)-1}$
under the action of $T\in \SL_2(\Z)$.
\item 
The function $\l$ sends 
the mid points $\i$, $\dfrac{1+\i}{2}$, $1+\i$  
of the edges of Schwarz's triangle $\f_0(\H)$ 
to 
$$\l(\i)=\frac{1}{2},\quad \l(\frac{1+\i}{2})=2,\quad \l(1+\i)=-1.$$
\item 
The function $\l$ sends
the centers $\-\w^2$  and $\w$ of gravity  
of Schwarz's triangles $\f_0(\H)$ and $\f_0(\overline{\H})$ 
to 
$$\l(-\w^2)=-\w^2,\quad \l(\w)=-\w.$$
\end{enumerate}
\end{lemma}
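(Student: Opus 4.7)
For part (1), the plan is to apply Fact \ref{fact:SL2act} to each factor of $\l(\tau)=\h_{10}(\tau)^4/\h_{00}(\tau)^4$. Squaring the identities $\h_{00}(\tau+1)^2=\h_{01}(\tau)^2$ and $\h_{10}(\tau+1)^2=\i\h_{10}(\tau)^2$ gives $\h_{00}(\tau+1)^4=\h_{01}(\tau)^4$ and $\h_{10}(\tau+1)^4=-\h_{10}(\tau)^4$. Then Jacobi's identity \eqref{eq:J-Id} lets me replace $\h_{01}(\tau)^4=\h_{00}(\tau)^4-\h_{10}(\tau)^4$, after which dividing numerator and denominator by $\h_{00}(\tau)^4$ yields $\l(T\cdot\tau)=-\l(\tau)/(1-\l(\tau))=\l(\tau)/(\l(\tau)-1)$.

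For part (2), I would exploit the fixed point $\tau=\i$ of $J$. Since $J\cdot\i=\i$, the $J$-transformation formulas in Fact \ref{fact:SL2act} applied at $\tau=\i$ (with factor $-\i\cdot\i=1$) give $\h_{01}(\i)^2=\h_{10}(\i)^2$, hence $\h_{01}(\i)^4=\h_{10}(\i)^4$. Plugging this into Jacobi's identity yields $\h_{00}(\i)^4=2\h_{10}(\i)^4$, whence $\l(\i)=\tfrac{1}{2}$. Then $\l(1+\i)=\l(T\cdot\i)=\l(\i)/(\l(\i)-1)=-1$ by part (1). For $\l((1+\i)/2)$, the key observation is that $J\cdot(1+\i)=-1/(1+\i)=(-1+\i)/2$ and $T\cdot((-1+\i)/2)=(1+\i)/2$. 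Applying Fact \ref{fact:SL2act} to $J$ at $\tau=1+\i$ gives
$$\l\!\left(\tfrac{-1+\i}{2}\right)=\frac{\h_{10}(J\cdot(1+\i))^4}{\h_{00}(J\cdot(1+\i))^4}=\frac{\h_{01}(1+\i)^4}{\h_{00}(1+\i)^4}.$$
Using $\l(1+\i)=-1$ we have $\h_{10}(1+\i)^4=-\h_{00}(1+\i)^4$, so Jacobi forces $\h_{01}(1+\i)^4=2\h_{00}(1+\i)^4$, giving $\l((-1+\i)/2)=2$. One more application of part (1) yields $\l((1+\i)/2)=\l(T\cdot(-1+\i)/2)=2/(2-1)=2$.

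For part (3), the idea is to exploit that $W^2$ fixes $\omega$: $W^2\cdot\omega=1/(-\omega-1)=\omega$ since $1+\omega+\omega^2=0$. Evaluating the three identities of Lemma \ref{lem:W-action} at $\tau=\omega$ with factor $\omega+1=-\omega^2$ produces a compatible system of linear relations between $\h_{00}(\omega)^2$, $\h_{01}(\omega)^2$, $\h_{10}(\omega)^2$; eliminating two of them yields $\h_{10}(\omega)^4=-(\omega+1)^2\h_{00}(\omega)^4=-\omega^4\h_{00}(\omega)^4=-\omega\cdot\h_{00}(\omega)^4$, hence $\l(\omega)=-\omega$. Finally, $-\omega^2=\omega+1=T\cdot\omega$, so part (1) gives
$$\l(-\omega^2)=\frac{\l(\omega)}{\l(\omega)-1}=\frac{-\omega}{-\omega-1}=\frac{-\omega}{\omega^2}=-\omega^2,$$
using $-\omega-1=\omega^2$ and $\omega^3=1$.

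The only delicate point I foresee is the book-keeping of the fourth-power signs and the $\omega$-arithmetic in part (3); once one checks that $(\omega+1)^3=-1$ (equivalently, $-\omega^2=\omega+1$), the three relations from Lemma \ref{lem:W-action} are mutually consistent and the value of $\l(\omega)$ drops out cleanly. Part (2) for $(1+\i)/2$ is essentially a routine reduction via $TJ$ to the already-computed value at $1+\i$.
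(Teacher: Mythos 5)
Your proof is correct, and parts (2) and (3) take a genuinely different route from the paper's. Part (1) is the same computation. For part (2), the paper attaches to each of the three midpoints its own fixing element of $\SL_2(\Z)$ ($J$, $J_1=J^{-1}T^{-1}JTJ$, $J_2=TJT^{-1}$), derives the corresponding functional equation of $\l$ ($1-\l$, $\l/(\l-1)$, $1/\l$), and solves each fixed-point equation, discarding the excluded values $0,1$; you instead compute only $\l(\i)=\tfrac12$ at the fixed point of $J$ and then transport that single value to $1+\i=T\cdot\i$ and to $(1+\i)/2=TJ\cdot(1+\i)$ via the explicit $T$- and $J$-transformation laws of $\l$. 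For part (3) the difference is more substantive: the paper solves $\l(\w)=1-1/\l(\w)$, obtains the quadratic $\l^2-\l+1=0$ with the two roots $-\w$ and $-\w^2$, and must invoke the geometric description of $\f_0(\overline{\H})$ in Fact \ref{fact:Inv-S-map}(2) to select the correct root, afterwards getting $\l(-\w^2)$ by a reflection argument; you instead read the value off at the level of squares, since the single identity $\h_{10}(W^2\cdot\tau)^2=-\i(\tau+1)\h_{00}(\tau)^2$ of Lemma \ref{lem:W-action} at the fixed point $\tau=\w$ gives $\h_{10}(\w)^4=-(\w+1)^2\h_{00}(\w)^4=-\w\,\h_{00}(\w)^4$ directly (no elimination among the three relations is even needed, contrary to your phrasing --- the third identity alone suffices), and then $\l(-\w^2)=\l(T\cdot\w)$ follows from part (1). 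What your route buys is the complete avoidance of the root-discrimination step and of Fact \ref{fact:Inv-S-map}(2); what the paper's route buys is uniformity (every value arises as the solution of a fixed-point equation for $\l$ alone, without descending to the theta constants at specific points).
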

\begin{proof}
(1) By Fact \ref{fact:SL2act} and \eqref{eq:J-Id}, we have
$$\l(T\cdot \tau)=\frac{\h_{10}(T\cdot \tau)^4}{\h_{00}(T\cdot \tau)^4}
=\frac{-\h_{10}(\tau)^4}{\h_{01}(\tau)^4}
=\frac{-\h_{10}(\tau)^4}{\h_{00}(\tau)^4-\h_{10}(\tau)^4}
=\frac{\l(\tau)}{\l(\tau)-1}.
$$

\smallskip\noindent
(2) Note that $\i\in \H$ is a fix point under the action of $J\in SL_2(\Z)$.
By Fact \ref{fact:SL2act} and \eqref{eq:J-Id}, 
the function $\l(\tau)$ is transformed into 
$$\l(J\cdot \tau)=\frac{\h_{10}(J\cdot \tau)^4}
{\h_{00}(J\cdot \tau)^4}=\frac{\h_{01}(\tau)^4}{\h_{00}(\tau)^4}=
\frac{\h_{00}(\tau)^4-\h_{10}(\tau)^4}{\h_{00}(\tau)^4}=1-\l(\tau)
$$
under the action of $J$. Thus we have 
$$\l(\i)=\l(J\cdot \i)=1-\l(\i),$$
which yields $\l(\i)=\dfrac{1}{2}$.
The points $\dfrac{1+\i}{2}$ and $1+\i$ in $\H$ are fixed by 
$J_1=\begin{pmatrix} -1 & 1 \\ -2 & 1\end{pmatrix}$ and 
$J_2=\begin{pmatrix} -1 & 2 \\ -1 & 1\end{pmatrix}$, which transform 
$\l(\tau)$ into 
$\dfrac{\l(\tau)}{\l(\tau)-1}$ and $\dfrac{1}{\l(\tau)}$, respectively
by Fact \ref{fact:SL2act} and \eqref{eq:J-Id} 
together with $J_1=J^{-1}T^{-1}JTJ$, $J_2=TJT^{-1}$. 
Thus we have $\l(\dfrac{1+\i}{2})=2$ and $\l(1+\i)=-1$ since 
$\l(\tau)\ne 0,1$ for any $\tau\in \H$.

\smallskip\noindent
(3)
The point $\w\in \H$ is fixed by 
$W=(JT)^{-1}$, which transform $\l(\tau)$ into 
$1-\dfrac{1}{\l(\tau)}$ 
by Lemma \ref{lem:W-action}. 
Thus we have $\l(\w)=1-\dfrac{1}{\l(\w)}$, which yields that 
$\l(\w)$ is $-\w$ or $-\w^2$.
Since $\w$ is in the left-half plane of $\C$, 
$\l(\w)$ should be $-\w$ in the lower-half plane of $\C$  
by Fact \ref{fact:Inv-S-map} (2). 
Note that $\l(\w)=-\w$ is equivalent to $\f_0(-\w)=\w$. 
Since $\overline{-\w}=-\w^2$ and the reflection image of $\w$ 
with respect to the imaginary axis is $-\w^2$, 
Fact \ref{fact:Inv-S-map} (2) also implies $\f_0(-\w^2)=-\w^2$, 
which is equivalent to $\l(-\w^2)=-\w^2$.
\end{proof}

As given in \cite[Theorem 2.1]{BB1}, 
the hypergeometric series $F(\frac{1}{2},\frac{1}{2},1;z)$ and 
the theta constants are related by Jacobi's formula:
\begin{fact}[Jacobi's formula]
\label{fact:Jacobi}
Jacobi's formula is 
\begin{equation}
\label{eq:Jacobi}
\h_{00}(\tau)^2 = F(\frac{1}{2},\frac{1}{2},1;\lambda(\tau))
\end{equation}
for any $\tau$ in the interior $\D(2)^\circ$ of 
the fundamental region $\D(2)$ of $\PGa(2)$, 
where $F(\frac{1}{2},\frac{1}{2},1;z)$ denotes  
the single-valued function on $\C-[1,\infty)$ given 
in Remark \ref{rem:discontinuous}. By acting $T$ on $\tau$ in 
\eqref{eq:Jacobi}, we have 
\begin{equation}
\label{eq:JacobiT}
\h_{01}(\tau)^2=
F(\frac{1}{2},\frac{1}{2},1;\frac{\lambda(\tau)}{\lambda(\tau)-1})
\end{equation}
for any $\tau\in \D(2)^\circ$. 
\end{fact}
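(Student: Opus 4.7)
The strategy is to pull $\h_{00}(\tau)^2$ back to the $z$-plane through Schwarz's map $\f_0$ and show it transforms under the monodromy of $\cE(\tfrac12,\tfrac12,1)$ in exactly the same way as the hypergeometric solution $f_2(z)=\pi F(\tfrac12,\tfrac12,1;z)$, so that their ratio is single-valued on $Z=\C-\{0,1\}$; a local analysis at the three regular singular points then forces the ratio to be constant, and evaluation at the cusp $z=0$ fixes the constant to $1/\pi$.

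Set $\tilde h(z):=\h_{00}(\f_0(z))^2$, using the univalent branch of $\f_0$ identifying $\H$ with $\f_0(\H)$ from Fact \ref{fact:Inv-S-map}. By Lemma \ref{lem:monodromy}, the circuit matrices of $\rho_0,\rho_1$ acting on $\mathbf F(z)=\tr(f_1,f_2)$ are $M_0=\bigl(\begin{smallmatrix}1&2\\0&1\end{smallmatrix}\bigr)$ and $M_1=\bigl(\begin{smallmatrix}1&0\\-2&1\end{smallmatrix}\bigr)$, which yield $\f_0\to\f_0+2$, $f_2\to f_2$ under $\rho_0$, and $\f_0\to\f_0/(1-2\f_0)$, $f_2\to(1-2\f_0)f_2$ under $\rho_1$. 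On the theta side, Fact \ref{fact:SL2act} gives $\h_{00}(\tau+2)^2=\h_{00}(\tau)^2$, and applying Fact \ref{fact:trans-theta} to $-M_1$ (since $M_1$ has $g_{21}<0$) yields $\h_{00}\bigl(\tau/(1-2\tau)\bigr)^2=(1-2\tau)\h_{00}(\tau)^2$. Thus $\tilde h\to\tilde h$ along $\rho_0$ and $\tilde h\to(1-2\f_0)\tilde h$ along $\rho_1$, so $\tilde h$ and $f_2$ share the same $\pi_1(Z,\tfrac12)$-cocycle, and the ratio $\tilde h/f_2$ is single-valued on $Z$.

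Since $F(\tfrac12,\tfrac12,1;z)=\tfrac{2}{\pi}K(\sqrt z)$ has no zeros on $\C-[1,\infty)$, $\tilde h/f_2$ is holomorphic on $\C-[1,\infty)$ and extends by single-valuedness to all of $Z$. A local Frobenius analysis at $z=0,1,\infty$ using Riemann's scheme (Table \ref{tab:R-scheme}) and the vertex data of $\f_0$ from Fact \ref{fact:Inv-S-map}(1) shows that $\tilde h/f_2$ has removable singularities at each, hence extends to a constant on $\P^1$. Letting $z\to 0^+$ along $(0,1)$: $\f_0(z)\to\i\infty$ by Fact \ref{fact:Inv-S-map}(1), so the theta Fourier expansion gives $\tilde h(z)\to\h_{00}(\i\infty)^2=1$, while $f_2(z)\to\pi F(\tfrac12,\tfrac12,1;0)=\pi$; thus $\tilde h=f_2/\pi=F(\tfrac12,\tfrac12,1;z)$, and substituting $z=\l(\tau)$ gives \eqref{eq:Jacobi}. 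Formula \eqref{eq:JacobiT} follows by replacing $\tau$ with $\tau+1$ and invoking $\h_{00}(\tau+1)^2=\h_{01}(\tau)^2$ (Fact \ref{fact:SL2act}) together with $\l(\tau+1)=\l(\tau)/(\l(\tau)-1)$ (Lemma \ref{lem:midpt}(1)).

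The main obstacle is the numerical coincidence in the second paragraph: the scalar $(1-2\tau)$ by which $f_2$ is multiplied under the circuit $\rho_1$ (via $M_1$) must exactly equal the $\Gamma(2)$-automorphy factor picked up by $\h_{00}^2$ (via Fact \ref{fact:trans-theta}). Tracking the sign via $-M_1$ and the character $\chi$ requires care; once this match is pinned down, the constancy step is routine local ODE theory at regular singular points, and the cusp evaluation is immediate.
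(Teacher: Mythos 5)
The paper does not prove this statement at all: it is quoted as a known \textsc{Fact} from \cite[Theorem 2.1]{BB1}, with only the one-line derivation of \eqref{eq:JacobiT} from \eqref{eq:Jacobi} via the action of $T$. You instead supply a self-contained proof, and your method — match the automorphy/monodromy cocycles of $\h_{00}(\f_0(z))^2$ and of $f_2(z)=\pi F(\frac12,\frac12,1;z)$ under $\rho_0,\rho_1$, conclude the ratio is single-valued on $Z$, kill the singularities at $0,1,\infty$, and evaluate the resulting constant at the cusp — is exactly the strategy the paper itself uses later for its new results (the proofs of Theorems \ref{th:Ana1-Jacobi} and \ref{th:Ana2-Jacobi}, where the ratio is shown to descend to a holomorphic function on the compactified quotient and hence to be constant). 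Your key computation checks out: with $M_1=\bigl(\begin{smallmatrix}1&0\\-2&1\end{smallmatrix}\bigr)$ one gets $f_2\mapsto(1-2\f_0)f_2$, while Fact \ref{fact:trans-theta} applied to $-M_1$ gives $\chi(-M_1)=\i^{g_{22}-1}=\i^{-2}=-1$ and automorphy factor $(-1)(2\tau-1)=1-2\tau$, so the two cocycles agree. The evaluation at $z\to0$ ($\f_0\to\i\infty$, $\h_{00}^2\to1$, $f_2\to\pi$) correctly pins the constant, and the deduction of \eqref{eq:JacobiT} matches the paper's, using Lemma \ref{lem:midpt}(1).

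Two places are thinner than they should be. First, "holomorphic on $\C-[1,\infty)$ and extends by single-valuedness to all of $Z$" does not by itself rule out poles of $\tilde h/f_2$ at points of $(1,\infty)$: there you are dividing by an analytically continued branch of $f_2$, and you need that \emph{no} branch of $f_2$ vanishes on $Z$. This follows from the fact that $\f_0=f_1/f_2$ has image in $\H$ (so a zero of $f_2$ would force $\f_0=\infty$ unless $f_1$ vanishes simultaneously, which the nonvanishing Wronskian \eqref{eq:Wronskian1} forbids) — precisely the argument the paper spells out in the proof of Theorem \ref{th:Ana1-Jacobi}; you should invoke it explicitly rather than the nonvanishing of the principal branch of $K$. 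Second, the "local Frobenius analysis" at $z=0,1,\infty$ is asserted rather than carried out; at $z=1$ and $z=\infty$ the exponents are resonant (differences $c-a-b=0$ and $b-a=0$), so the logarithmic behaviour of $f_2$ versus the cuspidal expansion of $\h_{00}(\tau)^2$ near $\tau=0$ and $\tau=1$ has to be compared, which works but is not automatic. With those two points filled in, the proof is complete.
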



\begin{remark}
\label{rem:conti-H}
Since $\H$ is simply connected,  
we can extend \eqref{eq:Jacobi} and \eqref{eq:JacobiT}
to equalities holding for any $\tau$ in the whole space $\H$. 
For these extensions, we should carefully trace the continuations of 
the right hand sides of \eqref{eq:Jacobi} and \eqref{eq:JacobiT}, 
for details refer to \cite[Corollary 4.7]{CM}.
By Schwarz's map 
$$\f_0:Z\ni z\mapsto \tau=
\i\frac{F(\frac{1}{2},\frac{1}{2},1;1-z)}{F(\frac{1}{2},\frac{1}{2},1;z)}
\in \H,$$ 
the equalities \eqref{eq:Jacobi} and \eqref{eq:JacobiT} are pulled back to  
\begin{equation}
\label{eq:Jacobi-z}
F(\frac{1}{2},\frac{1}{2},1;z)=
\h_{00}(\f_0(z))^2, \quad 
F(\frac{1}{2},\frac{1}{2},1;\frac{z}{z-1})=
\h_{01}(\f_0(z))^2,
\end{equation}
which are initially  equalities on 
$\B_0\cap\B_1$ 
and extended to those on 
the whole space $Z$ by the analytic continuation. 
\end{remark}

\section{Schwarz's map for $(a,b,c)=(\frac{1}{6},\frac{1}{2},1)$
}

We consider the hypergeometric differential equation 
$\cE(\frac{1}{6},\frac{1}{2},1)$. 
Its Riemann's scheme becomes as in Table \ref{Tab:RS621}.
\begin{table}[htb]
$$\begin{array}{|l|ccc|}
\hline
z_0 & 0 & 1 & \infty \\
\hline
e_{z_0,1}& 0 & 0 & {1}/{6}\\
e_{z_0,2}& 0 & {1}/{3} & {1}/{2}\\
\hline
e_{z_0,2}-e_{z_0,1} & {1}/{\infty} & {1}/{3} & {1}/{3}\\
\hline
  \end{array}
$$
\caption{Riemann's scheme for $(a,b,c)=(\frac{1}{6},\frac{1}{2},1)$}
\label{Tab:RS621}
\end{table}
The basis of the space of its local solutions around $\dot z=\frac{1}{2}$ 
in Lemma \ref{lem:basis} becomes 
$$\begin{pmatrix}
\ds{\exp(\frac{5\pi\i}{6})\int_{-\infty}^0(-t)^{-1/2}(z-t)^{-1/2}(1-t)^{-1/6}dt}\\
\ds{\int^{\infty}_1(t)^{-1/2}(t-z)^{-1/2}(t-1)^{-1/6}dt}\\
\end{pmatrix}=
\begin{pmatrix}
\ds{\exp(\frac{5\pi\i}{6})
B(\frac{1}{6},\frac{1}{2})F(\frac{1}{6},\frac{1}{2},\frac{2}{3},1-z)}\\[2mm]
\ds{B(\frac{1}{6},\frac{5}{6})F(\frac{1}{6},\frac{1}{2},1,z)}
\end{pmatrix}.$$
The circuit matrices with respect to this basis are 
$$M_0=\begin{pmatrix} 1 & -\w^2 \\ 0 & 1\end{pmatrix},\quad 
M_1=\begin{pmatrix} 1 & 0 \\ -2 & \w\end{pmatrix}.
$$

Since these matrices are not in $\PSL_2(\R)$, 
the analytic continuation of $f_1(z)/f_2(z)$ does not stay in $\H$. 
We find a matrix $R\in \GL_2(\C)$ such that 
$$RM_0R^{-1}, RM_1R^{-1}\in \PSL_2(\Z).$$
We can diagonalize $M_1$ by $P_x=\begin{pmatrix}
\frac{-1+\w}{2} & 0 \\ 1 & x
\end{pmatrix}$ with a parameter $x$, i.e., 
$$P_x^{-1} M_1P_x=\begin{pmatrix}
1 & 0 \\  0 & \w
\end{pmatrix}$$
holds for any $x\in \C$. 
On the other hand, since 
$$\begin{pmatrix} \w & \w^2\\ 1 &  1\end{pmatrix}^{-1} 
\begin{pmatrix} -1 & -1\\ 1 & 0 \end{pmatrix} 
\begin{pmatrix} \w & \w^2\\ 1 &  1\end{pmatrix}
=\begin{pmatrix}
\w & 0 \\  0 & \w^2
\end{pmatrix},$$
 we have 
$$
R_x M_1 R_x^{-1}=\w^2\begin{pmatrix} -1 & -1\\ 1 &  0\end{pmatrix}=N_1
$$
by setting 
$$R_x=\begin{pmatrix} \w & \w^2\\ 1 &  1\end{pmatrix}P_x^{-1}.$$
The matrix $R_x M_0R_x^{-1}$ becomes 
$$\frac{1}{3x}
\left(\begin {array}{cc}  \left( -1+\sqrt{3}\i  \right) {x}^{2}+5 x-1-\sqrt{3}\i & \left( 1+\sqrt{3}\i  \right) {x}^{2}+4 x+1-\sqrt{3}\i 
\\\noalign{\medskip}2 {x}^{2} -4 x+2& \left( 1-\sqrt{3}\i  \right) {x
}^{2}+x+1+\sqrt{3}\i \end {array}\right).
$$
Since this matrix is not diagonalizable and has an eigenvalue $1$, 
if the $(2,1)$-entry of this matrix vanishes then the action 
of this matrix becomes a translation on $\C$.  
By solving $2x^2-4x+2=0$ equivalent to the vanishing of $(2,1)$-entry of 
$R_x M_0R_x^{-1}$, we have $x=1$ and the above matrix becomes 
$N_0=\begin{pmatrix} 1 & 2 \\ 0 &  1\end{pmatrix}$ and 
$R_x$ for $x=1$ is 
$$R=R_1=\begin{pmatrix} \w & \w^2\\ 1 &  1\end{pmatrix}P_1^{-1}
=\begin{pmatrix} -2\w & \w^2 \\ 0 &  1\end{pmatrix}.
$$
Thus the circuit matrices $M_0$ and $M_1$ are transformed into 
$$RM_0R^{-1}=N_0=\begin{pmatrix} 1 & 2 \\ 0 &1\end{pmatrix},\quad 
RM_1R^{-1}=N_1=\w^2\begin{pmatrix} -1 & -1 \\ 1 &0 \end{pmatrix}.$$
The circuit matrix $N_\infty$ around $z=\infty$ is 
$$N_\infty=(N_0N_1)^{-1}=(-\w)\begin{pmatrix}
0 & -1 \\ 1 &-1
\end{pmatrix},
$$
which satisfy $N_\infty^3=-I_2$. 
Thus $N_\infty$ is of order $6$, and of order $3$ as a projective element.
Since 
$$N_1^{-1}N_0N_1=\begin{pmatrix} 1 & 0\\ -2 & 1
\end{pmatrix},$$
the group generated by $N_0$ and $N_1$ are 
$$\Gamma(2)\cup (\w^2 W\cdot \Gamma(2))\cup (\w W^2\cdot \Gamma(2)),$$
where $W=\begin{pmatrix}
-1 & -1 \\
1 & 0
\end{pmatrix}$, $W^2=\begin{pmatrix}
 0 & 1 \\
-1 & -1
\end{pmatrix}$, $W^3=I_2$ and 
\begin{align*}
\w^2 W\cdot \Gamma(2)&=\{\w^2 W g\in \GL_2(\Z[\w])\mid g\in \Gamma(2)\}, \\
\w W^2\cdot \Gamma(2)&=  \{\w W^2 g\in \GL_2(\Z[\w])\mid g\in \Gamma(2)\}. 
\end{align*}
The group $\Gamma(2)$ is normal in this group, and we have 
$$[\Gamma(2)\cup \w^2 W\cdot \Gamma(2)\cup \w W^2\cdot \Gamma(2)]/
\Gamma(2)\simeq \{I_2,W,W^2\},
$$
$$[\Gamma(2)\cup (\w^2 W\cdot \Gamma(2))\cup (\w W^2\cdot \Gamma(2))]/
\{\pm I_2,\pm\w I_2,\pm\w^2I_2\} \simeq \PGa(2)^{1/3}.$$

We change the basis $\mathbf{F}(z)$ into $R\cdot\mathbf{F}(z)$, 
then $R\cdot\mathbf{F}(z)$ takes the form 
$$\begin{pmatrix}
{-2\w f_1(z)+\w^2f_2(z) }\\{f_2(z)}
\end{pmatrix}
=\begin{pmatrix}
{2\i B(\frac{1}{6},\frac{1}{2})F(\frac{1}{6},\frac{1}{2},\frac{2}{3};1-z)
+2\w^2\pi F(\frac{1}{6},\frac{1}{2},1;z)}\\
{2 \pi F(\frac{1}{6},\frac{1}{2},1;z)}
\end{pmatrix}.
$$
\begin{definition}[Schwarz's map $\f_1$ for 
$(a,b,c)=(\frac{1}{6},\frac{1}{2},1)$]      
We define Schwarz's map $\f_1$ for $(a,b,c)=(\frac{1}{6},\frac{1}{2},1)$ 
by the analytic continuation of the map 
\begin{align*}
\f_1:z\mapsto \tau&
=\frac{2\i 
B(\frac{1}{6},\frac{1}{2})F(\frac{1}{6},\frac{1}{2},\frac{2}{3};1-z)
+2\w^2\pi F(\frac{1}{6},\frac{1}{2},1;z)}
{2 \pi F(\frac{1}{6},\frac{1}{2},1;z)}\\
& =\w^2+\i\cdot  \frac{B(\frac{1}{6},\frac{1}{2})}{\pi} \cdot 
\frac{F(\frac{1}{6},\frac{1}{2},\frac{2}{3};1-z)}
{F(\frac{1}{6},\frac{1}{2},1;z)}
\end{align*}
defined on $\B_0\cap \B_1$ to $\C-\{0,1\}$. 
Its projective monodromy representation is isomorphic to $\PGa(2)^{1/3}$.
\end{definition}

\begin{proposition}
\label{prop:ST33inf}
Schwarz's triangle $\f_1(\H)$ for $(a,b,c)=(\frac{1}{6},\frac{1}{2},1)$ is 
$$\{\tau\in \H\mid -\frac{1}{2} <\re(\tau) <\frac{1}{2},\ 
|\tau|>1\},$$
which is the interior of the fundamental domain $\D$ for $\PSL_2(\Z)$,  
and its vertices 
$\f_1(z_0)=\lim\limits_{z\to z_0,z\in \H} \f_1(z)$
$(z_0=0,1,\infty)$ are 
$$
\f_1(0)=\i\infty, \quad \f_1(1)=\w, \quad \f_1(\infty)=-\w^2.
$$
The images of the intervals $(0,1)$, $(-\infty,0)$, $(1,\infty)$ 
consisting of the boundary of $\H(\subset Z=\C-\{0,1\})$ under $\f_1$ 
are 
\begin{align*}
&\{\tau\in \H\mid \re(\tau)=-\frac{1}{2},\  
\im(\tau)>\frac{\sqrt{3}}{2}\},\\ 
&\{\tau\in \H\mid \re(\tau)=\frac{1}{2},\  
\im(\tau)> \frac{\sqrt{3}}{2}\},\\ 
&\{\tau\in \H\mid |\tau|=1,\  |\re(\tau)|<\frac{1}{2}\},
\end{align*}
respectively. See Figure \ref{fig:FD-SL2Z} for their shape. 
\end{proposition}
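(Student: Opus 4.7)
\noindent\emph{Proof plan.} The plan is to apply Fact \ref{fact:Schwarz} to determine the shape of Schwarz's triangle, identify each of its three vertices as the unique fixed point in $\H$ of the corresponding projective circuit transformation, and then pin down the three edges as hyperbolic geodesics between these vertices in correspondence with the intervals $(0,1)$, $(1,\infty)$, $(-\infty,0)$.

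First, I will verify the hypotheses of Fact \ref{fact:Schwarz} for $(a,b,c)=(\frac{1}{6},\frac{1}{2},1)$. The angles are $|1-c|\pi=0$, $|c-a-b|\pi=\frac{\pi}{3}$, $|a-b|\pi=\frac{\pi}{3}$, whose sum is $\frac{2\pi}{3}<\pi$, so by Fact \ref{fact:Schwarz} the image $\f_1(\H)$ is a hyperbolic triangle in $\H$ with these angles. The reciprocals $\infty,3,3$ match the orders in $\PSL_2(\Z)$ of the projectivizations of the circuit matrices $N_0=T^2$, $N_1=\w^2 W$, $N_\infty=(N_0N_1)^{-1}$ computed just before the statement.

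Next, I will identify each vertex as the unique fixed point in $\H$ of the corresponding projective circuit transformation. The matrix $T^2$ acts as the parabolic map $\tau\mapsto\tau+2$ with unique fixed point $\i\infty$, yielding $\f_1(0)=\i\infty$. The projective image of $N_1$ is $W:\tau\mapsto-\frac{\tau+1}{\tau}$, whose fixed points satisfy $\tau^2+\tau+1=0$, i.e.\ $\tau=\w,\w^2$; the one in $\H$ is $\w$, so $\f_1(1)=\w$. The projective image of $N_\infty$ acts as $\tau\mapsto-\frac{1}{\tau-1}$, whose fixed points satisfy $\tau^2-\tau+1=0$, i.e.\ $\tau=-\w,-\w^2$; the one in $\H$ is $-\w^2$, so $\f_1(\infty)=-\w^2$. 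The three hyperbolic geodesics in $\H$ connecting consecutive vertices are then uniquely determined to be the vertical rays $\re(\tau)=\pm\frac{1}{2}$ (with $\im(\tau)>\frac{\sqrt{3}}{2}$) and the unit arc $|\tau|=1$ with $|\re(\tau)|<\frac{1}{2}$, meeting at the prescribed angles.

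Finally, to match each edge with the corresponding interval on $\R$, I will use positivity: for $z\in(0,1)$, both $F(\frac{1}{6},\frac{1}{2},\frac{2}{3};1-z)$ and $F(\frac{1}{6},\frac{1}{2},1;z)$ take positive real values, so from the definition of $\f_1$ we obtain $\f_1(z)-\w^2\in\i\cdot\R_{>0}$, which forces $\re(\f_1(z))=-\frac{1}{2}$ and $\im(\f_1(z))>\frac{\sqrt{3}}{2}$. This identifies $\f_1((0,1))$ with the left vertical ray; continuity together with orientation-preservation of the holomorphic map $\f_1$ then forces $\f_1((1,\infty))$ to be the unit arc and $\f_1((-\infty,0))$ to be the right vertical ray. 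I expect the main technical obstacle to be precisely this orientation argument, namely confirming that $\f_1(\H)$ is the bounded triangular region $\{-\frac{1}{2}<\re(\tau)<\frac{1}{2},\ |\tau|>1\}$ and not one of its reflections across the edges; the positivity computation above pins down the correct side of the left edge, after which the bounded region is the unique connected component on the correct side of all three geodesics with the prescribed angles.
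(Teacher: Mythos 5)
Your proposal is correct, but it identifies the vertices by a genuinely different mechanism than the paper. The paper computes $\f_1(0)$ and $\f_1(1)$ as explicit limits along $(0,1)$ using Gauss's identity \eqref{eq:G-K} and the reflection formula for the Gamma function (this is how $\w^2+\sqrt3\i=\w$ appears), and it obtains $\f_1(\infty)$ by decomposing $R\cdot\mathbf{F}(z)$ into the eigen-solutions at $z=\infty$ with exponents $1/6$ and $1/2$ from Riemann's scheme, so that the dominant term determines the limit. You instead get all three vertices uniformly as the fixed points of the projectivized circuit matrices $T^2$, $W$, and $(N_0N_1)^{-1}$, which is shorter and avoids the special-function identities entirely; the cost is that you must invoke the standard facts that $\lim_{z\to z_0}\f_1(z)$ exists, is fixed by the local projective monodromy, and lies in $\H$ (rather than on $\partial\H$) exactly when the exponent difference is positive --- these are folklore but are not proved in the paper, whereas the paper's limit computations are self-contained. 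Both arguments use the same positivity observation on $(0,1)$ to anchor the picture, and both finish by noting that the remaining edges are arcs of circles or lines perpendicular to $\R$ joining the appropriate vertices. One small imprecision: positivity of the two hypergeometric values gives only $\f_1(z)\in\w^2+\i\R_{>0}$, i.e.\ $\re(\f_1(z))=-\tfrac12$ and $\im(\f_1(z))>-\tfrac{\sqrt3}{2}$; the stronger bound $\im(\f_1(z))>\tfrac{\sqrt3}{2}$ is not forced by positivity alone but follows once you know the image of $(0,1)$ is the edge joining the vertices $\w$ and $\i\infty$. This does not affect the validity of the argument.
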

\begin{proof}
We firstly consider the image of $(0,1)$ under $\f_1$. 
Note that $F(\frac{1}{6},\frac{1}{2},\frac{2}{3};1-z)$ and 
$F(\frac{1}{6},\frac{1}{2},1;z)$ take positive real values. 
Since 
$$\lim_{z\to 0,z\in (0,1)}F(\frac{1}{6},\frac{1}{2},\frac{2}{3};1-z)=\infty 
, \quad 
\lim_{z\to 0,z\in (0,1)}F(\frac{1}{6},\frac{1}{2},1;z)=1
$$
by Fact \ref{fact:G-K} for $(a,b,c)=(\frac{1}{6},\frac{1}{2},\frac{2}{3})$
together with $c-a-b=0$,  we have $\f_1(0)=\i\infty$. 
Since 
$$\lim_{z\to 1,z\in (0,1)}F(\frac{1}{6},\frac{1}{2},\frac{2}{3};1-z)=1,  \quad 
\lim_{z\to 1,z\in (0,1)}F(\frac{1}{6},\frac{1}{2},1;z)
=\frac{\G(1)\G(1-\frac{1}{6}-\frac{1}{2})}{\G(1-\frac{1}{6})\G(1-\frac{1}{2})}
=\frac{\G(1)\G(\frac{1}{3})}{\G(\frac{5}{6})\G(\frac{1}{2})}
$$
by Fact \ref{fact:G-K} for $(a,b,c)=(\frac{1}{6},\frac{1}{2},1)$
together with $c-a-b=\frac{1}{3}>0$, 
we have
\begin{align*} 
\f_1(1)&=\w^2+\i\cdot \frac{B(\frac{1}{6},\frac{1}{2})}{\pi}
\cdot  \frac{\G(\frac{5}{6})\G(\frac{1}{2})}{\G(1)\G(\frac{1}{3})}
=\w^2+\i\cdot\frac{\G(\frac{1}{6})\G(\frac{1}{2})\G(\frac{5}{6})\G(\frac{1}{2})}
{\pi\G(\frac{2}{3})\G(\frac{1}{3})}\\
&=\w^2+\i\cdot\frac{\sin\frac{\pi}{3}}{\sin\frac{\pi}{6}}=\w^2+\sqrt{3}\i=\w,
\end{align*}
where we use the reflection formula 
$\G(a)\G(1-a)=\dfrac{\pi}{\sin (\pi a)}$ for the Gamma function. 

We secondly consider the image $\f_1(\infty)$. 
Note that the eigenvalues of $N_\infty$ are $-\w^2$ and $-1$ and that 
their eigen row vectors are $(\w^2,1)$ and $(\w,1)$, respectively. 
Thus $(\w^2,1)\cdot R\cdot\mathbf{F}(z)$ and $(\w,1)\cdot R\cdot\mathbf{F}(z)$ 
take forms of 
$$
\a(1/z)^{1/6}f_{\infty,1}(z), 
\quad 
\b(1/z)^{1/2}f_{\infty,2}(z)
$$
by Table \ref{Tab:RS621},  
where $\a$ and $\b$ are non-zero complex numbers, and 
$f_{\infty,i}(z)$ $(i=1,2)$ are holomorphic functions around $z=\infty$. 
Thus $R\cdot \mathbf{F}(z)$ 
admits an expression 
\begin{equation}
\label{eq:conn}
\begin{array}{ll}
R\cdot \mathbf{F}(z)&=
\begin{pmatrix} \w^2 & 1\\ \w &1 \end{pmatrix}^{-1}
\begin{pmatrix}
\a(1/z)^{1/6}f_{\infty,1}(z) \\ \b(1/z)^{1/2}f_{\infty,2}(z)
\end{pmatrix}\\
&=
\dfrac{1}{\w^2-\w} 
\begin{pmatrix}
\a(1/z)^{1/6}f_{\infty,1}(z)-\b(1/z)^{1/2}f_{\infty,2}(z)\\
-\w\a(1/z)^{1/6}f_{\infty,1}(z)+\w^2\b(1/z)^{1/2}f_{\infty,2}(z)
\end{pmatrix},
\end{array}
\end{equation}
which implies 
\begin{align*}\f_1(\infty)&=\lim_{z\to \infty} 
\frac{\a(1/z)^{1/6}f_{\infty,1}(z)-\b(1/z)^{1/2}f_{\infty,2}(z)}
{-\w \a(1/z)^{1/6}f_{\infty,1}(z) + \w^2 \b(1/z)^{1/2}f_{\infty,2}(z)}\\
&=\lim_{z\to \infty} 
\frac{\a(1/z)^{1/6}f_{\infty,1}(z)}
{-\w\a(1/z)^{1/6}f_{\infty,1}(z)}=-\w^2.
\end{align*}

We finally determine the images of $(-\infty,0)$ and $(1,\infty)$ under 
$\f_1$. 
Since each of them is a part of a circle or line 
perpendicular to the real axis,  
$\f_1(-\infty,0)$ is the line from $-\w^2$ to $\i\infty$ 
parallel to the imaginary axis, and 
$\f_1(1,\infty)$ is the arc from $\w$ to $-\w^2$ with radius $1$ 
and center at the origin. 
\end{proof}

\begin{remark}
The image of $Z=\C-\{0,1\}$ under $\f_1$ is 
$$\H_1=\H-\{g\cdot\w,\ g\cdot(-\w^2)\in \H\mid g\in \PGa(2)^{1/3}\}.$$
We can extend Schwarz's map $\f_1$ to a map from $\P^1-\{0\}$ to 
$\H$ by corresponding $1$ and $\infty$ to 
the vertices $\w=\f_1(1)$ and $-\w^2=\f_1(\infty)$ of Schwarz's triangles 
or their equivalent points under the action of $\PGa(2)^{1/3}$. 
Moreover, we can extend it to a map from $\P^1$ to 
$\wh{\H}=\H\cup\{\i\infty\}\cup \Q$. 
Note that any point in  $\Q$ is equivalent to $\i\infty=\f_1(0)$ 
under the action of $\PGa(2)^{1/3}$. In fact, for any irreducible fraction
$r/s$, there exists $r',s'\in \Z$ such that $rr'+ss'=1$. 
Thus $g=\begin{pmatrix} r & -s'\\ s & r'
\end{pmatrix}$ belongs to $\SL_2(\Z)$ and sends $\i\infty$ to $r/s$. 
If this matrix does not belong to $\Gamma(2)^{1/3}$ then 
$g\cdot T$ belongs to $\Gamma(2)^{1/3}$ and sends $\i\infty$ to $r/s$, 
since $T\notin \Gamma(2)^{1/3}$ fixes  $\i\infty$ and 
$[\SL_2(\Z):\Gamma(2)^{1/3}]=2$.
%
These extensions of Schwarz's map $\f_1$ are denoted by 
the same symbol $\f_1$.
\end{remark}
\begin{remark}
To obtain $\f_1(\infty)$ in Proof of Proposition \ref{prop:ST33inf}, 
we cannot use 
$N_\infty'=(N_1N_0)^{-1}=N_0^{-1}N_1^{-1}$ which is a circuit matrix 
with respect to $R\cdot \mathbf{F}(z)$ along the loop connecting 
the end of $\rho_0^{-1}$ and the start of $\rho_1^{-1}$. 
This matrix is useful to study the reflection of Schwarz's triangle
with respect to the image of $(0,1)$ under $\f_1$. 
\end{remark}

\begin{theorem}
\label{th:inv-Sch}
The inverse of Schwarz's map $\f_1$ is given by 
\begin{equation}
\label{eq:nu}
\nu(\tau)=3\sqrt{3}\i \frac{\h_{00}(\tau)^4\h_{01}(\tau)^4\h_{10}(\tau)^4}
{(\h_{00}(\tau)^4+\w\h_{10}(\tau)^4)^3}
.
\end{equation}
\end{theorem}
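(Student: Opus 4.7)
The plan is to show that $\nu(\tau)$ descends to a Hauptmodul on $\H/\PGa(2)^{1/3}$ and then to pin down the equality $\nu = \f_1^{-1}$ by matching values at three distinguished points. First I would rewrite \eqref{eq:nu} in terms of $\l$: using Jacobi's identity $\h_{00}^4 = \h_{01}^4 + \h_{10}^4$ and the definition $\l = \h_{10}^4/\h_{00}^4$, the expression becomes
\begin{equation*}
\nu(\tau) = \frac{3\sqrt{3}\,\i\,\l(\tau)(1-\l(\tau))}{(\l(\tau)+\w^2)^3}.
\end{equation*}
This presents $\nu$ as a rational function of $\l$ of degree $3$ (numerator of degree $2$, denominator of degree $3$), and in particular $\nu$ is $\PGa(2)$-invariant.

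Second, I would verify invariance under $W=(JT)^{-1}$, which together with $\Gamma(2)$ generates $\Gamma(2)^{1/3}$. From the proof of Lemma \ref{lem:midpt}(3) (extracted from Lemma \ref{lem:W-action}), the action of $W$ on $\l$ is $\l(W\cdot\tau) = 1 - 1/\l(\tau)$. Substituting this into the displayed formula and simplifying using $\w\l + 1 = \w(\l+\w^2)$ and $\w^3 = 1$ yields $\nu(W\cdot\tau) = \nu(\tau)$. Combined with $\PGa(2)$-invariance, this shows that $\nu$ is a modular function for $\PGa(2)^{1/3}$. Since $\nu$ is a degree-$3$ rational function of the $\PGa(2)$-Hauptmodul $\l$ and $[\PGa(2)^{1/3}:\PGa(2)] = 3$, the three $\PGa(2)$-orbits over each generic $\nu$-value are precisely the three $\l$-preimages, permuted by $A_3 \simeq \PGa(2)^{1/3}/\PGa(2)$; hence $\nu$ is a Hauptmodul for $\PGa(2)^{1/3}$.

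Third, the composition $\nu \circ \f_1$ is a single-valued meromorphic function on $Z = \C \setminus \{0,1\}$ that extends to a rational function $\P^1 \to \P^1$ of degree one (since both $\f_1^{-1}$ and $\nu$ induce degree-one maps from $\H/\PGa(2)^{1/3}$ to $\P^1$), so it is a M\"obius transformation. I would then verify that it fixes $0, 1, \infty$: (i) $\nu(\f_1(0)) = \nu(\i\infty) = 0$ from $\h_{10}(\i\infty)=0$ and $\h_{00}(\i\infty)=\h_{01}(\i\infty)=1$; (ii) $\nu(\f_1(\infty)) = \nu(-\w^2) = \infty$, since Lemma \ref{lem:midpt}(3) gives $\l(-\w^2)=-\w^2$, making the denominator vanish while the numerator remains nonzero; (iii) $\nu(\f_1(1)) = \nu(\w) = 1$, since $\l(\w) = -\w$ gives $\l(1-\l) = -\w(1+\w) = \w^3 = 1$ and $(\l+\w^2)^3 = (\w^2-\w)^3 = (-\sqrt 3\,\i)^3 = 3\sqrt 3\,\i$. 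A M\"obius transformation fixing three distinct points is the identity, so $\nu \circ \f_1 = \mathrm{id}_Z$, which is the claimed $\nu = \f_1^{-1}$.

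The main obstacle I expect is the invariance check under $W$: the calculation is quite sensitive to signs and to powers of $\w$, and it hinges on the cancellation $(\w\l+1)^3 = \w^3(\l+\w^2)^3 = (\l+\w^2)^3$ to clean up the cube roots of unity. Everything else is structurally routine once this invariance and the degree count are cleanly established.
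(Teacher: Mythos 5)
Your proposal is correct, and all of the computations you flag as delicate do go through: $(1+\w\l)=\w(\l+\w^2)$ gives the identification of your $\l$-expression with \eqref{eq:nu}, the substitution $\l\mapsto 1-1/\l$ sends $(\l+\w^2)$ to $-\w(\l+\w^2)/\l$ so that $\nu(W\cdot\tau)=\nu(\tau)$, and the three value checks $\nu(\i\infty)=0$, $\nu(\w)=1$, $\nu(-\w^2)=\infty$ are exactly right. The difference from the paper is one of direction rather than of ingredients. The paper \emph{derives} the formula: it forms the elementary symmetric invariant $\nu^\sigma(\l)=\tfrac{1}{3}(\l+\sigma(\l)+\sigma^2(\l))=(\l^3-3\l+1)/(3\l(\l-1))$ of the covering involution $\sigma(\l)=1-1/\l$, computes where this sends the distinguished points $\l=0,-\w,-\w^2$, and then post-composes with the unique M\"obius transformation $\psi(z)=(\w^2-\w)/(z+\w^2)$ carrying $\infty,-\w,-\w^2$ to $0,1,\infty$; the closed form \eqref{eq:nu} falls out of simplifying $\psi\circ\nu^\sigma$. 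You instead \emph{verify} the stated formula: invariance under $W$ plus the degree count $3=[\PGa(2)^{1/3}:\PGa(2)]$ shows $\nu$ descends to a degree-one map on $\wh\H/\PGa(2)^{1/3}$, whence $\nu\circ\f_1$ is a M\"obius transformation fixing $0,1,\infty$ and so is the identity. Your route is shorter once the formula is in hand and uses the same three-point matching and the same quotient structure ($A_3$ acting on the $\l$-line through $\l\mapsto 1-1/\l$), but it would not have produced the constant $3\sqrt3\i$ or the factor $(\l+\w^2)^3$ ab initio; the paper's construction explains their provenance. Both arguments rest on the same external inputs: the extension of $\f_1$ to an isomorphism $\P^1\to\wh\H/\PGa(2)^{1/3}$, the values $\f_1(0)=\i\infty$, $\f_1(1)=\w$, $\f_1(\infty)=-\w^2$ from Proposition \ref{prop:ST33inf}, and Lemma \ref{lem:midpt}(3).
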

\begin{proof} Schwarz's map $\f_1$ leads to an isomorphism between 
$Z$ and $\H_1/\PGa(2)^{1/3}$. 
Since $\PGa(2)^{1/3}/\PGa(2)\simeq \{I_2,W,W^2\}\simeq A_3$, 
the space $\H_1/\PGa(2)^{1/3}$ is regarded as the quotient of $\H/\PGa(2)$ 
by the cyclic group $\{I_2,W,W^2\}$. Here 
$\H/\PGa(2)$ is isomorphic to $Z$ by 
$$\l :\H/\PGa(2)\ni \tau \mapsto \l(\tau)
=\frac{\h_{10}(\tau)^4}{\h_{00}(\tau)^4}\in Z.
$$
To distinguish the space $\C-\{0,1\}$ isomorphic to 
$\H_1/\PGa(2)^{1/3}$ and that to $\H/\PGa(2)$, 
the former is denoted by $Z_\nu$ and the latter by $Z_\l$.
Note that $Z_\l$ is a cyclic triple covering of $Z_\nu$. 
By tracing the action of $W$ via the map $\l$, 
we characterize the covering transformation group for the projection 
$\pr_{\l\nu}:Z_\l\to Z_\nu$. 
The function $\l(\tau)$ is transformed into 
\begin{equation}
\label{eq:W-act-lambda}
\l(W\cdot \tau)=1-\dfrac{1}{\l(\tau)},\quad 
\l(W^2\cdot \tau)=1-\dfrac{1}{\l(W\cdot \tau)}=\dfrac{1}{1-\l(\tau)}
\end{equation}
by the actions of $W$ and $W^2$ on $\tau$ as 
shown in Proof of Lemma \ref{lem:midpt} (3).  
Thus the covering transformation
group is generated by a map 
$$\sigma:Z_\l\ni \l\mapsto 1-\frac{1}{\l}\in Z_\l$$ 
satisfying 
$$\sigma^2(\l)=\frac{1}{1-\l},\quad \sigma^3(\l)=\l.$$  
By an invariant function 
$$\nu^\sigma(\l)=\frac{\l+\sigma(\l)+\sigma^2(\l)}{3}=
\frac{1}{3}\left(\l+(1-\frac{1}{\l})+\frac{1}{1-\l}\right)=
\frac{\l^3-3\l+1}{3\l(\l-1)}$$
under $\sigma$, we have a projection 
$$
\pr^\sigma_{\nu} :Z_\l\ni \l \mapsto \nu^\sigma(\l)\in Z_\l/\la \sigma\ra,$$
where  $Z_\l/\la \sigma\ra$ is isomorphic to $Z_\nu$.
We find an isomorphism $\psi:Z_\l/\la \sigma\ra\to Z_\nu$ 
such that $\psi(\nu^\sigma(\l(\tau)))$ becomes the inverse of $\f_1$.
The fixed points in $Z_\l$ of $\sigma$ are $-\w^2$ and $-\w$,   
which are solutions to $\sigma(\l)=\l$ equivalent to $\l^2-\l+1=0$. 
Since these points are fixed by $\sigma^2$, these points are 
ramification points of index $3$ of $\pr_{\nu}:Z_\l\to Z_\nu$, 
and the function $\nu^\sigma$ sends these points to 
$$\nu^\sigma(-\w^2)=-\w^2,\quad \nu^\sigma (-\w)=-\w.$$
On the other hand, the preimages of these points under the map 
$\l(\tau):\H/\PGa(2)\to Z_\l$ are $-\w^2$ and $\w$ by 
Fact \ref{fact:Inv-S-map}, i.e. 
$$\l(-\w^2)=-\w^2,\quad \l(\w)=-\w.$$ 
Hence the map $\nu^\sigma(\l(\tau))$ sends $\tau=\i\infty,\w,-\w^2$ as  
$$\nu^\sigma(\l(\i\infty))=\infty,\quad \nu^\sigma(\l(\w))=-\w,
\quad \nu^\sigma(\l(-\w^2))=-\w^2.$$
Here recall that 
$$\f_1(0)=\i\infty,\quad \f_1(1)=\w,\quad \f_1(\infty)=-\w^2.$$
Since $Z_\l/\la \sigma\ra$ and $Z_\nu$ are 
the complex projective line $\P^1$ minus $3$ points, 
the inverse of $\f_1$ is give as $\psi(\nu(\l(\tau)))$ 
by the linear fractional transformation $\psi$ sending 
$\infty$, $-\w$, $-\w^2$ to $0$, $1$, $\infty$. 
In fact, $\psi$ is given by 
$$\psi(z)=\frac{\w^2-\w}{z+\w^2}$$
and 
$$\psi(\nu^\sigma(\l))
=-\sqrt{3}\i\frac{3\l(\l-1)}{\l^3-3\l+1+3\w^2\l(\l-1)}
=\frac{3\sqrt{3}\i \l(1-\l)}{(\l+\w^2)^3}.
$$
By substituting $\l=\dfrac{\h_{10}(\tau)^4}{\h_{00}(\tau)^4}$  into 
the last term and using Jacobi's identity \eqref{eq:J-Id}, 
we have the expression $\nu(\tau)$ of $\f_1^{-1}$. 
\end{proof}

\begin{theorem}
\label{th:Ana1-Jacobi}
We have 
\begin{equation}
\label{eq:J621}
\h_{00}(\tau)^4+\w\h_{10}(\tau)^4=
F(\frac{1}{6},\frac{1}{2},1;\nu(\tau))^2
\end{equation}
for any $\tau$ in the interior $\wt\D^\circ$ of $\wt\D$, 
where $\nu(\tau)$ is given in \eqref{eq:nu}.
It is extended to an equality on the whole space $\H$.
\end{theorem}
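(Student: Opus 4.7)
The plan is to express $F(\frac{1}{6},\frac{1}{2},1;\nu(\tau))^2$ in terms of $\nu'(\tau)$ via the Wronskian identity, compute $\nu'(\tau)$ through the intermediate variable $\l=\l(\tau)$ using Jacobi's formula, and then match the two sides directly. Since
$\f_1(z)=\w^2+\frac{\i B(1/6,1/2)}{\pi}F(\frac{1}{6},\frac{1}{2},\frac{2}{3};1-z)/F(\frac{1}{6},\frac{1}{2},1;z)$, the Wronskian formula \eqref{eq:Wronskian1} for the parameters $(\frac{1}{6},\frac{1}{2},1)$ yields
$$\f_1'(z)=\frac{-\i}{\pi\, z(1-z)^{2/3}\,F(\frac{1}{6},\frac{1}{2},1;z)^2}.$$
Combining with the identity $\tau=\f_1(\nu(\tau))$, differentiation gives the key formula
$$F(\tfrac{1}{6},\tfrac{1}{2},1;\nu(\tau))^2=\frac{\nu'(\tau)}{\pi\i\,\nu(\tau)\,(1-\nu(\tau))^{2/3}}.$$

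Next I compute $\nu'(\tau)$ via the chain rule. Jacobi's formula (Fact \ref{fact:Jacobi}) together with the analogous Wronskian computation for $(\frac{1}{2},\frac{1}{2},1)$ gives $\l'(\tau)=\pi\i\,\l(1-\l)\h_{00}(\tau)^4$. From $\nu=3\sqrt{3}\i\,\l(1-\l)/(\l+\w^2)^3$ (Theorem~\ref{th:inv-Sch}), a short logarithmic differentiation, in which $1+\w+\w^2=0$ collapses the numerator to $(\l+\w)^2$, yields $d\nu/d\l=3\sqrt{3}\i\,(\l+\w)^2/(\l+\w^2)^4$. A central algebraic identity is
$$1-\nu(\tau)=\frac{(\l+\w)^3}{(\l+\w^2)^3},$$
proved by expanding $(\l+\w^2)^3-3\sqrt{3}\i\,\l(1-\l)$ and recognizing the result as $(\l+\w)^3$ via repeated use of $1+\w+\w^2=0$.

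Choosing the $2/3$-power branch so that $(1-\nu(\tau))^{2/3}\to 1$ as $\tau\to\i\infty$ (so $\l,\nu\to 0$) fixes $(1-\nu)^{2/3}=\w^2(\l+\w)^2/(\l+\w^2)^2$; substituting into the key formula and simplifying produces
$$F(\tfrac{1}{6},\tfrac{1}{2},1;\nu(\tau))^2=\w\,\h_{00}(\tau)^4\,(\l(\tau)+\w^2)=\h_{00}(\tau)^4+\w\,\h_{10}(\tau)^4$$
on $\wt\D^\circ$. The extension to all of $\H$ follows because $\nu$ is single-valued on $\H$, $\H$ is simply connected, and so $F(\frac{1}{6},\frac{1}{2},1;\nu(\tau))^2$ admits a well-defined analytic continuation on $\H$, which must agree with the meromorphic function $\h_{00}(\tau)^4+\w\h_{10}(\tau)^4$ by the identity theorem.

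The principal obstacle is the branch choice for $(1-\nu)^{2/3}$: the naive selection $(\l+\w)^2/(\l+\w^2)^2$ (which equals $\w\neq 1$ at $\l=0$) produces an answer off by a factor of $\w$, namely $\h_{10}^4+\w^2\h_{00}^4$, whereas only the corrected branch $\w^2(\l+\w)^2/(\l+\w^2)^2$ realigns the result to the target $\h_{00}^4+\w\h_{10}^4$. This is precisely where the factor $\w^2$ in the circuit matrix $\w^2W$ along $\rho_1$, emphasized in the introduction, enters the proof.
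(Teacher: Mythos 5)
Your proof is correct, but it takes a genuinely different route from the paper. The paper argues structurally: it shows that both sides of \eqref{eq:J621} are single-valued holomorphic functions on $\H$ transforming identically under $N_0=T^2$ and $N_1=\w^2 W$ (both pick up the factor $\w\tau^2$ under $W$, which is where the scalar $\w^2$ in the circuit matrix enters), that both have a simple zero exactly on the $\PGa(2)^{1/3}$-orbit of $-\w^2$ and no other zeros, and hence that their ratio descends to a holomorphic function on the compactified quotient $\P^1$, i.e.\ a constant, equal to $1$ by evaluating at $\i\infty$. You instead differentiate: the Wronskian \eqref{eq:Wronskian1} gives $\f_1'(z)=-\i/\bigl(\pi z(1-z)^{2/3}F(\frac16,\frac12,1;z)^2\bigr)$, hence $F(\frac16,\frac12,1;\nu(\tau))^2=\nu'(\tau)/\bigl(\pi\i\,\nu(1-\nu)^{2/3}\bigr)$, and the identities $\l'=\pi\i\,\l(1-\l)\h_{00}^4$, $d\nu/d\l=3\sqrt3\i\,(\l+\w)^2/(\l+\w^2)^4$ and $1-\nu=(\l+\w)^3/(\l+\w^2)^3$ (all of which I checked) reduce everything to $\w(\l+\w^2)\h_{00}^4=\h_{00}^4+\w\h_{10}^4$. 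The trade-offs: the paper's argument is independent of Jacobi's formula, is reused verbatim for Theorem \ref{th:Ana2-Jacobi}, and yields as a by-product the divisor information recorded in Remark \ref{rem:multi-valued-1}; yours consumes Jacobi's formula as an input (via $\l'=\pi\i\l(1-\l)\h_{00}^4$), so it really derives the analogue \emph{from} Jacobi's formula, but in exchange it avoids the compactification and zero-counting entirely and makes the normalizing constant completely explicit. The one delicate step is your cube-root branch: pinning $(1-\nu)^{2/3}$ at $\tau=\i\infty$ is legitimate, but you should say why that normalization agrees with the principal branch implicit in the Wronskian near $\dot z=\tfrac12$ --- namely, the path $\f_1((0,\tfrac12])$ from $\f_1(\tfrac12)$ to $\i\infty$ corresponds under $\nu$ to the real segment $(0,\tfrac12]$, along which $1-\nu$ stays in $(0,1)$ and the principal branch persists; with that sentence added, the constant $\w^2$ (which, as you note, plays the role of the scalar in $\w^2W$) is forced, and the extension to all of $\H$ by the identity theorem is sound.
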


\begin{proof}
We first show that the both sides of \eqref{eq:J621} 
behave in the same manner as holomorphic functions on $\H$  
under the action of $N_0$ and $N_1$.
It is easy to see that the left hand side $\h_{00}(\tau)^4+\w\h_{10}(\tau)^4$ of \eqref{eq:J621} is invariant under the action of $N_0$ by 
Fact \ref{fact:SL2act} together with $N_0=T^2$. 
Since $N_1=\w^2W$ acts on it projectively, we may use $W$ instead of $N_1$. 
By \eqref{eq:W-action} and Jacobi's identity \eqref{eq:J-Id}, we have 
$$\h_{00}(W\cdot \tau)^4+\w\h_{10}(W\cdot \tau)^4=
-\tau^2\h_{10}(\tau)^4+\w \tau^2\h_{01}(\tau)^4
=\w\tau^2(\h_{00}(\tau)^4+\w\h_{10}(\tau)^4).
$$
We study the right hand side $F(\frac{1}{6},\frac{1}{2},1;\nu(\tau))^2$ 
of \eqref{eq:J621} under the actions of $N_0$ and $N_1$. 
Though the function $\nu(\tau)$ is invariant under the actions of 
$N_0$ and $N_1$, we should study the analytic continuation of 
$F(\frac{1}{6},\frac{1}{2},1;\nu(\tau))^2$ along paths $\g_0$ and $\g_1$ 
from $\tau$ to $N_0\cdot \tau=\tau+2$ and to $N_1\cdot \tau=1-\frac{1}{\tau}$ 
since $F(\frac{1}{6},\frac{1}{2},1;z)$ is not single valued.  
Note that the function $\nu$ sends these paths to loops homotopic to $\rho_0$ 
and $\rho_1$ in $Z=\C-\{0,1\}$. Thus 
$2\pi F(\frac{1}{6},\frac{1}{2},1;\nu(\tau))$ is invariant 
under the analytic continuation along $\g_0$,  
and transformed into  $\w^2$ times the first entry of 
$R\cdot \mathbf{F}(\nu(\tau))$
by the analytic continuation along $\g_1$, 
since $2\pi F(\frac{1}{6},\frac{1}{2},1;z)$ is the second entry of 
$R\cdot \mathbf{F}(\nu(\tau))$, and 
the second rows of $N_0$ and $N_1$ are $(0,1)$ and $(\w^2,0)$. 
Recall that $\tau$ is the quotient of the first entry of 
$R\cdot \mathbf{F}(\nu(\tau))$ by the second. 
Thus $F(\frac{1}{6},\frac{1}{2},1;\nu(\tau))^2$ is transformed into 
$$\big(\w^2\tau\cdot 
F(\frac{1}{6},\frac{1}{2},1;\nu(\tau))\big)^2
=\w\tau^2\cdot 
F(\frac{1}{6},\frac{1}{2},1;\nu(\tau))^2
$$
by the analytic continuation along $\g_1$.

We next study the zeros of the both sides of \eqref{eq:J621}. 
The function 
$\h_{00}(\tau)^4+\w\h_{10}(\tau)^4$
appears in the denominator of $\nu(\tau)$. 
Since $\nu(\tau)$ has a simple zero at $\tau=\i\infty$ and 
a simple pole at $\tau=-\w^2$ as a function on $\wh \H/\PGa(2)^{1/3}$, 
and the factor $\h_{00}(\tau)\h_{01}(\tau)\h_{10}(\tau)$ in its numerator 
never vanishes on $\H$, 
$\h_{00}(\tau)^4+\w\h_{10}(\tau)^4$ has a simple zero at 
$\tau=-\w^2$ as a function on $\H$, and never vanishes at any point 
not in the $\PGa(2)^{1/3}$-orbit of $-\w^2$.  
Since $\tau\in \H$ is the quotient of the first entry of 
$R\cdot \mathbf{F}(z)$ by the second, if its denominator 
$2\pi F(\frac{1}{6},\frac{1}{2},1;z)$ vanishes at $z_0\in Z$, 
then its numerator also vanishes at this point. By the multi-linearity 
of the determinant, the Wronskian of  $\mathbf{F}(z)$ should has 
a factor $z-z_0$. However, this contradicts \eqref{eq:Wronskian1}, 
which admits the analytic continuation along any path in $Z$.
Thus if $\tau\in \H$ does not belong to the $\PGa(2)^{1/3}$-orbit of $-\w^2$ or 
the $\PGa(2)^{1/3}$-orbit of $\w$, 
then $F(\frac{1}{6},\frac{1}{2},1;\nu(\tau))^2$ does not vanish.
Since $\nu(\w)=1$ and  
$$F(\frac{1}{6},\frac{1}{2},1;1)=
\frac{\G(\frac{1}{3})}{\G(\frac{5}{6})\G(\frac{1}{2})}\ne 0$$ 
by \eqref{eq:G-K}, 
$F(\frac{1}{6},\frac{1}{2},1;\nu(\w))^2$
does not vanish at $\tau=\w$. 
Since $\nu(\tau)$ has a pole at $\tau=-\w^2$ with order $3$ as 
a function on $\H$, and the behavior of 
$F(\frac{1}{6},\frac{1}{2},1;z)$ around $z=\infty$ is 
expressed by Landau's symbol $O((1/z)^{1/6})$ $(z=\infty)$ by \eqref{eq:conn},
$F(\frac{1}{6},\frac{1}{2},1;\nu(\w))^2$ has a simple zero at $\tau=-\w^2$ 
as a function on $\H$. 

We finally show \eqref{eq:J621}. As shown until now, 
the ratio 
$$
\frac{F(\frac{1}{6},\frac{1}{2},1;\nu(\tau))^2}
{\h_{00}(\tau)^4+\w\h_{10}(\tau)^4}
$$
is regarded as a holomorphic function on the quotient space 
$\H/\PGa(2)^{1/3}$, 
whose one-point compactification by $\i\infty$ is isomorphic to $\P^1$.  
Since 
$$\lim_{\tau\to \i\infty}(\h_{00}(\tau)^4+\w\h_{10}(\tau)^4)=1,
\quad 
\lim_{\tau\to \i\infty}F(\frac{1}{6},\frac{1}{2},1;\nu(\tau))^2
=F(\frac{1}{6},\frac{1}{2},1;0)^2=1,
$$
the ratio becomes a holomorphic function on $\P^1$. 
It should be a constant, and this constant is $1$.
\end{proof}

\begin{remark}
\label{rem:multi-valued-1}
Though $F(\frac{1}{6},\frac{1}{2},1;\nu(\tau))^2$ is a single-valued 
holomorphic function on $\H$, 
$F(\frac{1}{6},\frac{1}{2},1;\nu(\tau))$ is not. 
As shown in Proof of Theorem \ref{th:Ana1-Jacobi}, 
any element of the $\PGa(2)^{1/3}$-orbit 
$$\{g\cdot (-\w^2)\in \H\mid g\in \PGa(2)^{1/3}\}$$ 
of $-\w^2$ is a simple zero of 
$F(\frac{1}{6},\frac{1}{2},1;\nu(\tau))^2
=\h_{00}(\tau)^4+\w\h_{10}(\tau)^4$. 
Note also that the monodromy representation for 
$\cE(\frac{1}{6},\frac{1}{2},1)$ includes the scalar multiple of $-1$. 
\end{remark}

\begin{cor}
\label{cor:Ana1-Jacobi}
We have 
\begin{equation}
\label{eq:J621-inv}
\begin{array}{ll}
\h_{01}(\tau)^4-\w\h_{10}(\tau)^4&=
F(\dfrac{1}{6},\dfrac{1}{2},1;-3\sqrt{3}\i 
\dfrac{\h_{00}(\tau)^4\h_{01}(\tau)^4\h_{10}(\tau)^4}
{(\h_{01}(\tau)^4-\w\h_{10}(\tau)^4)^3})^2\\[2mm]
&=F(\dfrac{1}{6},\dfrac{1}{2},1;\dfrac{\nu(\tau)}{\nu(\tau)-1})^2
\end{array}
\end{equation}
for any $\tau\in \wt\D^\circ$, where $\nu(\tau)$ is given in \eqref{eq:nu}.
They are extended to equalities on the whole space $\H$.
\end{cor}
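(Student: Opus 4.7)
The plan is to deduce the corollary from Theorem~\ref{th:Ana1-Jacobi} by substituting $\tau \mapsto T\cdot \tau = \tau+1$ in the identity \eqref{eq:J621} and then matching the resulting expressions. Since Theorem~\ref{th:Ana1-Jacobi} is already extended to all of $\H$, the substitution produces an identity on all of $\H$ (hence on $\wt\D^\circ$), so the subtle point is purely algebraic: identify what $\nu(\tau+1)$ becomes.

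First I would transform the left-hand side using Fact~\ref{fact:SL2act}. Raising the formulas there to the fourth power gives $\h_{00}(\tau+1)^4=\h_{01}(\tau)^4$, $\h_{01}(\tau+1)^4=\h_{00}(\tau)^4$, and $\h_{10}(\tau+1)^4=-\h_{10}(\tau)^4$, so that
$$\h_{00}(\tau+1)^4+\w\h_{10}(\tau+1)^4=\h_{01}(\tau)^4-\w\h_{10}(\tau)^4,$$
which is the left-hand side of \eqref{eq:J621-inv}. Plugging these same relations into the defining formula \eqref{eq:nu} for $\nu(\tau+1)$ gives
$$\nu(\tau+1)=3\sqrt{3}\i\cdot\frac{\h_{01}(\tau)^4\,\h_{00}(\tau)^4\,(-\h_{10}(\tau)^4)}{(\h_{01}(\tau)^4-\w\h_{10}(\tau)^4)^3}=-3\sqrt{3}\i\cdot\frac{\h_{00}(\tau)^4\h_{01}(\tau)^4\h_{10}(\tau)^4}{(\h_{01}(\tau)^4-\w\h_{10}(\tau)^4)^3},$$
which matches the first expression appearing in the argument of $F$ in \eqref{eq:J621-inv}.

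The remaining task is to verify the second equality, namely $\nu(\tau+1)=\nu(\tau)/(\nu(\tau)-1)$. Abbreviating $A=\h_{00}(\tau)^4$, $B=\h_{01}(\tau)^4$, $C=\h_{10}(\tau)^4$, Jacobi's identity \eqref{eq:J-Id} gives $A=B+C$, and the claim reduces to the algebraic identity
$$(A+\w C)^3-(B-\w C)^3=3\sqrt{3}\i\,ABC.$$
Using $A=B+C$ one has $A+\w C=B-\w^2 C$, so applying $x^3-y^3=(x-y)(x^2+xy+y^2)$ with $x=B-\w^2 C$, $y=B-\w C$ yields $x-y=(\w-\w^2)C=\sqrt{3}\i\,C$ and, using $1+\w+\w^2=0$ and $\w^3=1$, $x^2+xy+y^2=3B(B+C)=3AB$. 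Multiplying gives $3\sqrt{3}\i\,ABC$, as required.

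The main obstacle is the bookkeeping in this last cubic identity; everything else is a direct substitution using Fact~\ref{fact:SL2act} together with the extension of \eqref{eq:J621} to $\H$ asserted in Theorem~\ref{th:Ana1-Jacobi}. Since $F(\tfrac{1}{6},\tfrac{1}{2},1;\nu(\tau))^2$ is single-valued on $\H$ (as argued in the proof of Theorem~\ref{th:Ana1-Jacobi}), squaring avoids any branch-cut issue for $F$, so the resulting identity is unambiguous on all of $\H$.
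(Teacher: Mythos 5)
Your proof is correct and follows essentially the same route as the paper: act $T$ on \eqref{eq:J621}, use Fact \ref{fact:SL2act} to transform the theta constants, and identify $\nu(T\cdot\tau)$ with $\nu(\tau)/(\nu(\tau)-1)$. The only (cosmetic) difference is that the paper checks this last identity by passing through the $\lambda$-function and the relation $\l(T\cdot\tau)=\l(\tau)/(\l(\tau)-1)$, whereas you verify the equivalent cubic identity $(A+\w C)^3-(B-\w C)^3=3\sqrt{3}\i\,ABC$ directly in the theta constants via Jacobi's identity; both computations are valid.
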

\begin{proof}
We have only to act $T$ on the equality \eqref{eq:J621}. 
By Fact \ref{fact:SL2act}, the left hand side of \eqref{eq:J621} 
is transformed into 
$$\h_{00}(T\cdot \tau)^4+\w\h_{10}(T\cdot \tau)^4
=\h_{01}(\tau)^4-\w\h_{10}(T\cdot \tau)^4. 
$$
The function $\nu(\tau)$ in the right hand side of \eqref{eq:J621} 
is transformed into 
$$
\nu(T\cdot \tau)=
3\sqrt{3}\i \frac{\h_{00}(T\cdot \tau)^4\h_{01}(T\cdot \tau)^4
\h_{10}(T\cdot \tau)^4}{(\h_{00}(T\cdot \tau)^4+\w\h_{10}(T\cdot \tau)^4)^3}
=3\sqrt{3}\i \frac{-\h_{00}(\tau)^4\h_{01}(\tau)^4\h_{10}(\tau)^4}
{(\h_{01}(\tau)^4-\w\h_{10}(\tau)^4)^3}.
$$

Since $\l(T\cdot \tau)=\dfrac{\l(\tau)}{\l(\tau)-1}$, 
\begin{align*}
\nu(T\cdot \tau)&=\frac{3\sqrt{3}\i \l(T\cdot \tau)(1-\l(T\cdot \tau))}
{(\l(T\cdot \tau)+\w^2)^3}=
\frac{3\sqrt{3}\i \frac{\l(\tau)}{\l(\tau)-1}(1-\frac{\l(\tau)}{\l(\tau)-1})}
{(\frac{\l(\tau)}{\l(\tau)-1}+\w^2)^3}
=
\frac{3\sqrt{3}\i \l(\tau)(1-\l(\tau))}
{(-\w\l(\tau)-1)^3},
\\
\frac{\nu(\tau)}{\nu(\tau)-1}&=
\frac{3\sqrt{3}\i \l(\tau)(1-\l(\tau))}
{(\l(\tau)+\w^2)^3}
\cdot \frac{(\l(\tau)+\w^2)^3}
{3\sqrt{3}\i \l(\tau)(1-\l(\tau))-(\l(\tau)+\w^2)^3}\\
&=\frac{3\sqrt{3}\i \l(\tau)(1-\l(\tau))}
{-\l(\tau)^3-3(\w^2+\sqrt{3}\i)\l(\tau)^2-3(\w-\sqrt{3}\i)\l(\tau)-1}
=\frac{3\sqrt{3}\i \l(\tau)(1-\l(\tau))}{-(\l(\tau)+\w)^3},
\end{align*}
we have 
$\nu(T\cdot \tau)=\dfrac{\nu(\tau)}{\nu(\tau)-1}.$
\end{proof}

\begin{remark}
\label{rem:rel-theta's}
By Jacobi's identity \eqref{eq:J-Id}, we have 
\begin{align*}
\h_{00}(\tau)^4+\w\h_{10}(\tau)^4&=
\h_{01}(\tau)^4-\w^2\h_{10}(\tau)^4=
-\w^2\h_{00}(\tau)^4-\w\h_{01}(\tau)^4\\
&=\frac{1-\w^2}{3}\left(\h_{00}(\tau)^4-\w\h_{01}(\tau)^4-\w^2\h_{10}(\tau)^4 
\right),\\
\h_{00}(\tau)^4+\w^2\h_{10}(\tau)^4&=
\h_{01}(\tau)^4-\w\h_{10}(\tau)^4=
-\w\h_{00}(\tau)^4-\w^2\h_{01}(\tau)^4\\
&=\frac{1-\w}{3}\left(\h_{00}(\tau)^4-\w^2\h_{01}(\tau)^4-\w\h_{10}(\tau)^4 
\right), 
\end{align*}
\begin{align*}
E_4(\tau)&=
\frac{\h_{00}(\tau)^8+\h_{10}(\tau)^8+\h_{01}(\tau)^8}{2}=
\h_{00}(\tau)^8-\h_{00}(\tau)^4\h_{10}(\tau)^4+\h_{10}(\tau)^8\\
&=\h_{00}(\tau)^8-\h_{00}(\tau)^4\h_{01}(\tau)^4+\h_{01}(\tau)^8
=\h_{01}(\tau)^8+\h_{01}(\tau)^4\h_{10}(\tau)^4+\h_{10}(\tau)^8.
\end{align*}
\end{remark}

\begin{cor}
\label{cor:8jyouwa}
For any $\tau\in \wt\D^\circ$, we have 
\begin{equation}
\label{eq:JF-prod-type}
E_4(\tau)=\frac{\h_{00}(\tau)^8+\h_{01}(\tau)^8+\h_{10}(\tau)^8}{2}=
F(\frac{1}{6},\frac{1}{2},1;\nu(\tau))^2
F(\frac{1}{6},\frac{1}{2},1;\frac{\nu(\tau)}{\nu(\tau)-1})^2,
\end{equation}
which is extended to an equality on the whole space $\H$.
\end{cor}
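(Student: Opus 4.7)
The plan is to derive \eqref{eq:JF-prod-type} by simply multiplying together the two formulas provided by Theorem \ref{th:Ana1-Jacobi} and Corollary \ref{cor:Ana1-Jacobi}, and then matching the resulting product of theta expressions with the expression for $E_4(\tau)$ collected in Remark \ref{rem:rel-theta's}. No new analytic input should be required, because the extension of each factor from $\wt\D^\circ$ to all of $\H$ has already been established; pointwise multiplication of two identities that are already known on $\H$ automatically yields an identity on $\H$.

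Concretely, I would first record from Theorem \ref{th:Ana1-Jacobi} and Corollary \ref{cor:Ana1-Jacobi} the two equalities
\begin{align*}
F(\tfrac{1}{6},\tfrac{1}{2},1;\nu(\tau))^{2}&=\h_{00}(\tau)^{4}+\w\,\h_{10}(\tau)^{4},\\
F(\tfrac{1}{6},\tfrac{1}{2},1;\tfrac{\nu(\tau)}{\nu(\tau)-1})^{2}&=\h_{01}(\tau)^{4}-\w\,\h_{10}(\tau)^{4},
\end{align*}
multiply them, and expand. The product is
\[
\h_{00}^{4}\h_{01}^{4}-\w\,\h_{00}^{4}\h_{10}^{4}+\w\,\h_{01}^{4}\h_{10}^{4}-\w^{2}\h_{10}^{8}.
\]
Using Jacobi's identity \eqref{eq:J-Id} in the form $\h_{00}^{4}=\h_{01}^{4}+\h_{10}^{4}$ (so that $\h_{00}^{4}\h_{01}^{4}=\h_{01}^{8}+\h_{01}^{4}\h_{10}^{4}$ and $\h_{00}^{4}\h_{10}^{4}=\h_{01}^{4}\h_{10}^{4}+\h_{10}^{8}$) and the relation $1+\w+\w^{2}=0$, everything involving $\w$ collapses, leaving
\[
\h_{01}(\tau)^{8}+\h_{01}(\tau)^{4}\h_{10}(\tau)^{4}+\h_{10}(\tau)^{8}.
\]

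By the last line of Remark \ref{rem:rel-theta's}, this quantity is exactly $E_4(\tau)$, which by \eqref{eq:fullmodular} equals $\tfrac12(\h_{00}^{8}+\h_{01}^{8}+\h_{10}^{8})$. This finishes the identity on $\wt\D^\circ$; the extension to all of $\H$ is automatic since both sides are already known to agree with well-defined holomorphic objects on $\H$ from Theorem \ref{th:Ana1-Jacobi} and Corollary \ref{cor:Ana1-Jacobi}.

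There is no real obstacle here: the argument is a purely algebraic manipulation built on top of the two preceding analogues of Jacobi's formula. The only point to be careful about is the bookkeeping of powers of $\w$ in the expansion, making sure the cancellations $1+\w+\w^{2}=0$ are applied in the correct places; a direct substitution check against the explicit expression $\h_{01}^{8}+\h_{01}^{4}\h_{10}^{4}+\h_{10}^{8}$ for $E_{4}(\tau)$ from Remark \ref{rem:rel-theta's} serves as a safeguard.
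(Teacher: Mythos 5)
Your proposal is correct and takes essentially the same route as the paper: multiply the identities of Theorem \ref{th:Ana1-Jacobi} and Corollary \ref{cor:Ana1-Jacobi} and reduce the resulting theta expression to $E_4(\tau)$ via Jacobi's identity and Remark \ref{rem:rel-theta's}. The only cosmetic difference is that the paper first rewrites $\h_{01}(\tau)^4-\w\h_{10}(\tau)^4$ as $\h_{00}(\tau)^4+\w^2\h_{10}(\tau)^4$ before multiplying, landing on $\h_{00}^8-\h_{00}^4\h_{10}^4+\h_{10}^8$ instead of your $\h_{01}^8+\h_{01}^4\h_{10}^4+\h_{10}^8$; both equal $E_4(\tau)$ by the same remark.
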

\begin{proof}
We have 
\begin{align*}
&F(\frac{1}{6},\frac{1}{2},1;\nu(\tau))^2
F(\frac{1}{6},\frac{1}{2},1;\frac{\nu(\tau)}{\nu(\tau)-1})^2
=
(\h_{00}(\tau)^4+\w\h_{10}(\tau)^4)(\h_{00}(\tau)^4+\w^2\h_{10}(\tau)^4)\\
=&\h_{00}(\tau)^8-\h_{00}(\tau)^4\h_{10}(\tau)^4+\h_{10}(\tau)^8
=\frac{\h_{00}(\tau)^8+\h_{10}(\tau)^8+\h_{01}(\tau)^8}{2}=E_4(\tau)
\end{align*}
by Theorem \ref{th:Ana1-Jacobi}, 
Corollary \ref{cor:Ana1-Jacobi}  and  Remark \ref{rem:rel-theta's}. 
\end{proof}

By pulling back identities in 
Theorem \ref{th:Ana1-Jacobi} and Corollaries 
\ref{cor:Ana1-Jacobi} and \ref{cor:8jyouwa} under Schwarz's  map 
$\f_1:Z\to \H_1$, we have the following.
\begin{cor}
\label{cor:Ana1-Jacobi-B}
We have 
\begin{align}
\nonumber
F(\frac{1}{6},\frac{1}{2},1;z)^2&=
 \h_{00}(\f_1(z))^4+\w\h_{10}(\f_1(z))^4,\\
%
\label{eq1:Jocobi-1-z}
F(\frac{1}{6},\frac{1}{2},1;\frac{z}{z-1})^2&=
 \h_{00}(\f_1(z))^4+\w^2\h_{10}(\f_1(z))^4,\\
%
\nonumber
F(\frac{1}{6},\frac{1}{2},1;z)^2F(\frac{1}{6},\frac{1}{2},1;\frac{z}{z-1})^2
&=\frac{\h_{00}(\f_1(z))^8+\h_{01}(\f_1(z))^8+\h_{10}(\f_1(z))^8}{2}=E_4(\f_1(z)),
\end{align}
which are initially equalities on 
$\B_0\cap\B_1$ 
and extended to those on 
the whole space $Z$ by the analytic continuation. 
\end{cor}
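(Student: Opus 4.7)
The plan is to pull back the three identities from Theorem \ref{th:Ana1-Jacobi}, Corollary \ref{cor:Ana1-Jacobi} and Corollary \ref{cor:8jyouwa} under Schwarz's map $\f_1$. The key substitution is $\tau=\f_1(z)$: by Theorem \ref{th:inv-Sch}, $\nu$ is the inverse of $\f_1$, so $\nu(\f_1(z))=z$ whenever $z\in \B_0\cap \B_1$ (where $\f_1$ is directly defined and takes values in $\wt\D^\circ$). This makes the substitution possible in the domain on which the three identities have been established.

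First, substituting $\tau=\f_1(z)$ in \eqref{eq:J621} and using $\nu(\f_1(z))=z$ gives the first identity
$F(\frac{1}{6},\frac{1}{2},1;z)^2 = \h_{00}(\f_1(z))^4+\w\h_{10}(\f_1(z))^4$ on $\B_0\cap\B_1$. For the second identity, I would substitute into the second line of \eqref{eq:J621-inv}; the right-hand side becomes $F(\frac{1}{6},\frac{1}{2},1;\frac{z}{z-1})^2$, while on the left Jacobi's identity \eqref{eq:J-Id} (as recorded in Remark \ref{rem:rel-theta's}) rewrites $\h_{01}(\tau)^4-\w\h_{10}(\tau)^4$ as $\h_{00}(\tau)^4+\w^2\h_{10}(\tau)^4$, yielding the stated form. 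The third identity is then obtained either by multiplying the first two and using Remark \ref{rem:rel-theta's} together with \eqref{eq:fullmodular}, or directly as the pullback of \eqref{eq:JF-prod-type}.

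It remains to extend each identity from $\B_0\cap\B_1$ to all of $Z$ by analytic continuation. On $\B_0\cap\B_1$ the hypergeometric factors on the left are given by the convergent series from Definition \ref{def:HGS} and Lemma \ref{lem:basis}, and $\f_1(z)$ is the holomorphic map of Proposition \ref{prop:ST33inf}, so both sides are holomorphic there and the identities are honest equalities between holomorphic functions. Analytic continuation of both sides along any path in $Z$ preserves each equality, provided we continue the branches of $F(\frac{1}{6},\frac{1}{2},1;\cdot)$ and of $\f_1$ coherently along the same path.

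The main obstacle will be keeping track of multi-valuedness: neither $F(\frac{1}{6},\frac{1}{2},1;z)^2$ nor $\f_1(z)$ is single-valued on $Z$, and one must check that the monodromies on the two sides match along $\rho_0$ and $\rho_1$. But this matching was essentially already verified inside the proof of Theorem \ref{th:Ana1-Jacobi}: both sides of \eqref{eq:J621} transform in the same way under the circuit matrices $N_0=T^2$ and $N_1=\w^2W$. Pulling this transformation behavior through $\f_1$ shows that the corresponding continuations on the $z$-side of each identity in Corollary \ref{cor:Ana1-Jacobi-B} remain compatible, so the three equalities indeed propagate unambiguously from $\B_0\cap\B_1$ to all of $Z$.
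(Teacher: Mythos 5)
Your proposal is correct and follows exactly the route the paper intends: the paper gives no separate proof for this corollary beyond the one-line remark that it is the pullback of Theorem \ref{th:Ana1-Jacobi} and Corollaries \ref{cor:Ana1-Jacobi} and \ref{cor:8jyouwa} under $\f_1$, using $\nu(\f_1(z))=z$. Your additional bookkeeping (rewriting $\h_{01}^4-\w\h_{10}^4$ as $\h_{00}^4+\w^2\h_{10}^4$ via Remark \ref{rem:rel-theta's}, and checking that the monodromy matching from the proof of Theorem \ref{th:Ana1-Jacobi} propagates the equalities from $\B_0\cap\B_1$ to all of $Z$) correctly fills in the details the paper leaves implicit.
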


\section{Schwarz's map for $(a,b,c)=(\frac{1}{12},\frac{5}{12},1)$
}

We consider the hypergeometric differential equation 
$\cE(\frac{1}{12},\frac{5}{12},1)$. 
Its Riemann's scheme becomes as in Table \ref{Tab:RS1512}.
\begin{table}[htb]
$$\begin{array}{|l|ccc|}
\hline
z_0 & 0 & 1 & \infty \\
\hline
e_{z_0,1}& 0 & 0 & {1}/{12}\\
e_{z_0,2}& 0 & {1}/{2} & {5}/{12}\\
\hline
e_{z_0,2}-e_{z_0,1} & {1}/{\infty} & {1}/{2} & {1}/{3}\\
\hline
  \end{array}
$$
\caption{Riemann's scheme for $(a,b,c)=(\frac{1}{12},\frac{5}{12},1)$}
\label{Tab:RS1512}
\end{table}
The basis of the space of its local solutions around $\dot z=\frac{1}{2}$ in 
Lemma \ref{lem:basis} becomes 
\begin{align*}
&\begin{pmatrix}
\ds{\exp(\frac{11\pi\i}{12})\int_{-\infty}^0(-t)^{-7/12}(z-t)^{-5/12}(1-t)^{-1/12}dt}\\
\ds{\int^{\infty}_1(t)^{-7/12}(t-z)^{-5/12}(t-1)^{-1/12}dt}\\
\end{pmatrix}\\
=&
\begin{pmatrix}
\ds{\exp(\frac{11\pi\i}{12})
B(\frac{1}{12},\frac{5}{12})F(\frac{1}{12},\frac{5}{12},\frac{1}{2},1-z)}\\[2mm]
\ds{B(\frac{1}{12},\frac{11}{12})F(\frac{1}{12},\frac{5}{12},1,z)}
\end{pmatrix}.
\end{align*}
The circuit matrices with respect to this basis are 
$$M_0=\begin{pmatrix} 1 & 1-e^{-\pi\i/6}\\ 0 & 1\end{pmatrix},\quad 
M_1=\begin{pmatrix} 1 & 0 \\ e^{7\pi\i/6}-1 & -1\end{pmatrix}.
$$

As in the previous section, we can find a matrix 
$R=\begin{pmatrix} -\i-\w& -\i \\ 0 & 1\end{pmatrix}$, which satisfies 
$$N_0=RM_0R^{-1}=\begin{pmatrix} 1& 1 \\ 0 & 1\end{pmatrix}=T, \quad 
N_1=RM_0R^{-1}=\i \begin{pmatrix} 0& 1 \\-1 & 0\end{pmatrix}=\i J,$$
$$
N_\infty=(N_0N_1)^{-1}=\i \begin{pmatrix} 0& 1 \\ -1 & 1\end{pmatrix}.
$$
Since $N_\infty^3=\i I_2$ and $\SL_2(\Z)$ is generated by $T$ and $J$,   
the group generated by $N_0$ and $N_1$ is 
$$\SL_2(\Z)\la \i I_2\ra=\{\i^k g\in \GL_2(\Z[\i])\mid 
g\in \SL_2(\Z), k\in \Z\},$$
whose projectivization is isomorphic to $\PSL_2(\Z)$. 

\begin{definition}[Schwarz's map $\f_2$ for 
$(a,b,c)=(\frac{1}{12},\frac{5}{12},1)$]   
\label{def:Schwarz2}   
We define Schwarz's map $\f_2$ for $(a,b,c)=(\frac{1}{12},\frac{5}{12},1)$ 
by the analytic continuation of the map 
\begin{equation}
\label{eq:Schwarz2}
\f_2:z\mapsto \tau
=\tau(z)=\i\cdot \frac{
  {B} \left(\frac{1}{12},{\frac{5}{12}}\right)}
{2\pi}\cdot 
\frac{
{F(\frac{1}{12},{\frac{5}{12}},\frac{1}{2};\,1-z)}}
{F(\frac{1}{12},{\frac{5}{12}},1;\,z)}-\i.
\end{equation}
defined on $\B_0\cap \B_1$ to $\C-\{0,1\}$. 
Its projective monodromy representation is isomorphic to $\PSL_2(\Z)$.
\end{definition}

\begin{remark}
Schwarz's map for $(a,b,c)=(\frac{1}{12},{\frac{5}{12}},1)$ 
is studied in \cite[Proposition 6.4.1]{Sh} by a different basis 
of the space of local solutions from ours.
\end{remark}

\begin{proposition}
\label{prop:ST23inf}
Schwarz's triangle $\f_2(\H)$ for $(a,b,c)=(\frac{1}{12},\frac{5}{12},1)$ is 
$$\{\tau\in \H\mid 0 <\re(\tau) <\frac{1}{2},\ |\tau|>1\},$$
which is the interior of the right half of the fundamental region 
$\D$ for $\PSL_2(\Z)$, 
and its vertices $\f_2(z_0)=\lim\limits_{z\to z_0,z\in \H} \f_2(z)$
$(z_0=0,1,\infty)$ are 
\begin{equation}
\label{eq:f2-vertices}
\f_2(0)=\i\infty, \quad \f_2(1)=\i, \quad \f_2(\infty)=-\w^2.
\end{equation}
See Figure \ref{fig:FD-SL2Z} for their shape. 
\end{proposition}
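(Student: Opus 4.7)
The plan is to mirror the proof of Proposition \ref{prop:ST33inf}: determine each of the three vertices in turn, then identify the three edges as hyperbolic geodesics joining them. By Fact \ref{fact:Schwarz} the angles at the vertices are $|1-c|\pi=0$, $|c-a-b|\pi=\frac{\pi}{2}$, $|a-b|\pi=\frac{\pi}{3}$, which already match those of the right half of the fundamental region $\D$ of $\PSL_2(\Z)$, so locating the vertices will pin down the triangle.

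For $\f_2(0)$, as $z\to 0$ along $(0,1)$ one has $F(\frac{1}{12},\frac{5}{12},1;z)\to 1$, whereas $F(\frac{1}{12},\frac{5}{12},\frac{1}{2};1-z)$ diverges because the parameters satisfy the boundary case $c-a-b=\frac{1}{2}-\frac{1}{12}-\frac{5}{12}=0$ excluded from Fact \ref{fact:G-K}. Hence $\f_2(0)=\i\infty$. For $\f_2(1)$, Gauss's identity gives $F(\frac{1}{12},\frac{5}{12},\frac{1}{2};1-z)\to 1$ and $F(\frac{1}{12},\frac{5}{12},1;z)\to\frac{\G(1)\G(\frac{1}{2})}{\G(\frac{11}{12})\G(\frac{7}{12})}$ as $z\to 1$. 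Substituting into \eqref{eq:Schwarz2}, together with $B(\frac{1}{12},\frac{5}{12})=\G(\frac{1}{12})\G(\frac{5}{12})/\G(\frac{1}{2})$, the reflection formula $\G(a)\G(1-a)=\pi/\sin(\pi a)$, and the trigonometric identity $\sin\frac{\pi}{12}\sin\frac{5\pi}{12}=\sin\frac{\pi}{12}\cos\frac{\pi}{12}=\frac{1}{2}\sin\frac{\pi}{6}=\frac{1}{4}$, a short calculation yields $\f_2(1)=2\i-\i=\i$.

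For $\f_2(\infty)$ I would exploit the circuit matrix $N_\infty=(N_0N_1)^{-1}=\i\begin{pmatrix}0&1\\-1&1\end{pmatrix}$; its eigenvalues $e^{\pi\i/6}$ and $e^{5\pi\i/6}$ reproduce the local exponents $\frac{1}{12}$ and $\frac{5}{12}$ at $z=\infty$ from Table \ref{Tab:RS1512}, with eigen row vectors proportional to $(1,\w)$ and $(1,\w^2)$. Inverting this pair produces a connection formula, analogous to \eqref{eq:conn}, expressing $R\cdot\mathbf{F}(z)$ as a linear combination of local solutions $\a(1/z)^{1/12}f_{\infty,1}(z)$ and $\b(1/z)^{5/12}f_{\infty,2}(z)$ around $z=\infty$. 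Letting $z\to\infty$ and retaining only the dominant $(1/z)^{1/12}$ term gives $\f_2(\infty)=\w^2\a/(-\a)=-\w^2$.

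Finally I would verify the three edges. On $(0,1)$ both hypergeometric factors in \eqref{eq:Schwarz2} are positive real, so $\re(\f_2(z))=0$ there and $\f_2((0,1))$ is the vertical ray from $\i$ to $\i\infty$. The images $\f_2((1,\infty))$ and $\f_2((-\infty,0))$ are hyperbolic geodesics in $\H$ joining $\i$ to $-\w^2$ and $-\w^2$ to $\i\infty$; since $|\i|=|{-\w^2}|=1$ the first lies on the unit circle $|\tau|=1$, and since $\re(-\w^2)=\frac{1}{2}$ the second is the vertical segment $\re(\tau)=\frac{1}{2}$. These three arcs bound exactly the claimed region. The main obstacle is the connection-formula calculation producing $\f_2(\infty)$; the other steps run parallel to those in Proposition \ref{prop:ST33inf}, the only genuinely new arithmetic input being the sine identity used above.
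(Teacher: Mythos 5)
Your proposal is correct and follows essentially the same route as the paper: Gauss's identity with the degenerate case $c-a-b=0$ for $\f_2(0)$, the reflection formula and the sine identity $2\sin\frac{\pi}{12}\sin\frac{5\pi}{12}=\frac{1}{2}$ for $\f_2(1)=\i$, the eigen row vectors $(1,\w)$, $(1,\w^2)$ of $N_\infty$ and the dominant $(1/z)^{1/12}$ term for $\f_2(\infty)=-\w^2$, and the geodesic argument for the edges. The only difference is that you write out the connection-formula computation at $z=\infty$ explicitly (correctly), where the paper merely refers back to the analogous step in Proposition \ref{prop:ST33inf}.
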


\begin{proof}
Fact \ref{fact:G-K} together with $c-a-b=0$ for $(a,b,c)=
(\frac{1}{12},\frac{5}{12},\frac{1}{2})$ yields $\f_2(0)=\i\infty$. 
It also yields that
\begin{align*}\f_2(1)&=\lim_{z\to 1,z\in(0,1)}
\Big(\i\cdot \frac{
  {B} \left(\frac{1}{12},{\frac{5}{12}}\right)}
{2\pi}\cdot 
\frac{
{F(\frac{1}{12},{\frac{5}{12}};\,\frac{1}{2};\,1-z)}}
{F(\frac{1}{12},{\frac{5}{12}};\,1;\,z)}-\i\Big)\\
&=
\i\cdot \frac{
  \G\left(\frac{1}{12}\right)\G\left({\frac{5}{12}}\right)}
{2\pi\G\left(\frac{1}{2}\right)}\cdot 
\frac{
{\G\left(\frac{11}{12}\right)\G\left(\frac{7}{12}\right)}}
{\G(1)\G\left(\frac{1}{2}\right)}-\i= \frac{\i\cdot \pi^2}
{2\pi^2 \cdot \sin\left(\frac{\pi}{12}\right)
\sin\left(\frac{5\pi}{12}\right)}-\i
=\i.
\end{align*}
Note that the eigenvalues of $N_\infty$ are $e^{\pi\i/6}=\w/\i$ and 
$e^{5\pi\i/6}=\w^2/\i$ and that their eigen row vectors are $(1,\w)$ and 
$(1,\w^2)$, respectively.
We can show $\f_2(\infty)=-\w^2$ quite similarly to $\f_1(\infty)=-\w^2$ 
in Proof of Proposition \ref{prop:ST33inf}.
Since each edge of Schwarz's triangle $\f_2(\H)$ is 
parallel to the imaginary axis or in a circle with center in the real axis, 
and bounded by two of $\f_2(0)=\i\infty$, $\f_2(1)=\i$, $\f_2(\infty)=-\w^2$, 
the edges are contained in the lines $\re(\tau)=0$ or $\re(\tau)=\frac{1}{2}$, 
or the circle $|\tau|=1$. 
\end{proof}

\begin{remark}
The image of $Z=\C-\{0,1\}$ under $\f_2$ is 
$$\H_2=\H-\{g\cdot\i,\ g\cdot(-\w^2)\in \H\mid g\in \PSL_2(\Z)\}.$$
We can extend Schwarz's map $\f_2$ to a map from $\P^1-\{0\}$ to 
$\H$ by corresponding $1$ and $\infty$ to 
the vertices $\i=\f_2(1)$ and $-\w^2=\f_2(\infty)$ of Schwarz's triangles 
or their equivalent points under the action of $\PSL_2(\Z)$. 
Moreover, we can extend it to a map from $\P^1$ to 
$\wh{\H}=\H\cup\{\i\infty\}\cup \Q$. 
Note that any point in  $\Q$ is equivalent to $\i\infty=\f_2(0)$ 
under the action of $\PSL_2(\Z)$. 
These extensions of Schwarz's map $\f_2$ are denoted by 
the same symbol $\f_2$.
\end{remark}

\begin{theorem}
\label{th:inv-f2}
The inverse of Schwarz's map $\f_2$ is given by 
\begin{equation}
\label{eq:inv-f2}
\H_2\ni \tau \mapsto z=\frac{1}{j(\tau)}\in \C-\{0,1\}, 
\end{equation}
where the $j$-invariant is defined by
\begin{equation}
\label{eq:j-inv}
j(\tau)=\frac{4}{27}\frac{(\l(\tau)^2-\l(\tau)+1)^3}{\l(\tau)^2(1-\l(\tau))^2}
=
\frac{1}{54}
\frac{(\h_{00}(\tau)^8+\h_{01}(\tau)^8+\h_{10}(\tau)^8)^3}
{\h_{00}(\tau)^8\h_{01}(\tau)^8\h_{10}(\tau)^8}.
\end{equation}
\end{theorem}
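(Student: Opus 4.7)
The plan is to verify that both $\f_2$ and $1/j$ give isomorphisms between $\P^1$ and (a compactification of) $\H_2/\PSL_2(\Z)$ and then pin down the composition by three values.

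First I would check that $j(\tau)$ is well-defined as a function on $\H/\PSL_2(\Z)$, i.e.\ that the rational expression in $\lambda$ is invariant under the $S_3=\SL_2(\Z)/\Gamma(2)$-action. Since $\lambda$ is $\Gamma(2)$-invariant (Fact~\ref{fact:Inv-S-map}(3)), it suffices to verify that $\frac{4}{27}(\lambda^2-\lambda+1)^3/(\lambda^2(1-\lambda)^2)$ is unchanged by the six substitutions $\lambda\mapsto 1-\lambda$ and $\lambda\mapsto 1/\lambda$ coming from $J$ and $TJT^{-1}$; both are straightforward algebraic checks. The equality with the theta-constant expression then follows from Jacobi's identity~\eqref{eq:J-Id}: writing $\lambda=\h_{10}^4/\h_{00}^4$ and $1-\lambda=\h_{01}^4/\h_{00}^4$, a direct computation gives $\lambda^2-\lambda+1=(\h_{00}^8+\h_{01}^8+\h_{10}^8)/(2\h_{00}^8)$ and $\lambda^2(1-\lambda)^2=\h_{10}^8\h_{01}^8/\h_{00}^{16}$, which upon substitution yields the second formula in~\eqref{eq:j-inv}.

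Next I would evaluate $j$ at the three distinguished points $\i\infty$, $\i$, $-\w^2$ using Lemma~\ref{lem:midpt}: at $\tau=\i\infty$ one has $\lambda\to 0$, giving $j\to\infty$; at $\tau=\i$ one has $\lambda(\i)=\frac{1}{2}$, so $\lambda^2-\lambda+1=\frac{3}{4}$ and $\lambda^2(1-\lambda)^2=\frac{1}{16}$, giving $j(\i)=1$; at $\tau=-\w^2$ one has $\lambda(-\w^2)=-\w^2$, so $\lambda^2-\lambda+1=\w+\w^2+1=0$, giving $j(-\w^2)=0$. Hence $1/j$ sends the $\PSL_2(\Z)$-orbits of $\i\infty,\i,-\w^2$ to $0,1,\infty$ respectively, in exact agreement with the vertices $\f_2(0)=\i\infty$, $\f_2(1)=\i$, $\f_2(\infty)=-\w^2$ computed in Proposition~\ref{prop:ST23inf}.

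The main step is then a degree-one argument. By Proposition~\ref{prop:ST23inf} and the identification of $\f_2(\H)$ with half the standard fundamental domain~\eqref{eq:fund-reg-G}, the extended Schwarz map $\f_2:\P^1\to\wh{\H}/\PSL_2(\Z)$ is a biholomorphism of orbifolds (the order-2 point $\i$, the order-3 point $-\w^2$ and the cusp $\i\infty$ correspond under $\f_2$ to the branch points $1,\infty,0$ of Riemann's scheme in Table~\ref{Tab:RS1512}). On the other hand $j$, being $\PSL_2(\Z)$-invariant, induces a holomorphic map $\wh{\H}/\PSL_2(\Z)\to\P^1$ which, by the evaluations above and a local order count at $\i,-\w^2,\i\infty$, has degree one and is therefore an isomorphism. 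Consequently the composition $(1/j)\circ\f_2:\P^1\to\P^1$ is a biholomorphism, hence a linear fractional transformation; since it fixes the three points $0,1,\infty$, it is the identity, and $\f_2^{-1}(\tau)=1/j(\tau)$ follows.

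The point that requires the most care is justifying that $j$ gives a degree-one map on the compactified quotient. If one wants to avoid invoking the classical hauptmodul property of $j$, the cleanest substitute is the local order count: show that $j$ has a simple pole at $\i\infty$, a simple zero at $-\w^2$ (as an orbifold point of order $3$, the function $\lambda^2-\lambda+1$ vanishes to order $1$ on $\H$, which becomes order $1$ on the quotient after dividing by the ramification index), and takes the value $1$ with multiplicity $1$ at $\i$, so the total degree is $1$. All the rest is bookkeeping with Lemma~\ref{lem:midpt} and Proposition~\ref{prop:ST23inf}.
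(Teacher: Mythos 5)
Your proposal is correct and follows essentially the same route as the paper's proof: descend $1/j$ to $\wh{\H}/\PSL_2(\Z)$, match the three vertices $\f_2(0)=\i\infty$, $\f_2(1)=\i$, $\f_2(\infty)=-\w^2$ against $1/j=0,1,\infty$ using Lemma \ref{lem:midpt} and Proposition \ref{prop:ST23inf}, and finish with a degree-one argument. The only real divergence is the degree count: the paper obtains it by writing $1/j$ as the degree-$6$ rational map $\l\mapsto\frac{27}{4}\,\l^2(1-\l)^2/(\l^2-\l+1)^3$ composed with $\l$ and dividing by the degree $6$ of the covering $\wh{\H}/\PGa(2)\to\wh{\H}/\PSL_2(\Z)$, which is a little cleaner than your local order count at $\i\infty,\i,-\w^2$ (where you would additionally have to note that the cusp is the \emph{only} pole, and that the zero order at $-\w^2$ comes from the cube $(\l^2-\l+1)^3$ divided by the ramification index $3$).
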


\begin{remark}
We refer to \cite[p.42]{Yo} for our definition of $j(\tau)$.
As in \cite[(23) in p. 90]{Se}, $1728j(\tau)$ admits a Fourier expansion 
with integral coefficients: 
$$1728j(\tau)=\frac{1}{q}+744+196884q+\cdots,$$
where $q=\exp(2\pi\i\tau)$.
\end{remark}

\begin{proof} Schwarz's map $\f_2$ leads to an isomorphism between 
$Z$ and $\H_2/\PSL_2(\Z)$, which is extended to that 
between  $\P^1$ and $\wh{\H_2}/\PSL_2(\Z)$. 
Since $\PSL_2(\Z)/\PGa(2)$ is isomorphic to $S_3$, 
the space $\H_2/\PSL_2(\Z)$ is regarded as the quotient of $\H/\PGa(2)$ 
by the symmetric group $S_3$. 
To distinguish the space $\C-\{0,1\}$ isomorphic to $\H_2/\PSL_2(\Z)$ and 
that to $\H/\PGa(2)$, the former is denoted by $Z_{1/j}$ and 
the latter by $Z_\l$. The space $Z_\l$ is regarded as a 
$6$-fold covering of $Z_{1/j}$. Thus we have a diagram
$$\begin{array}{ccc} 
\H/\PGa(2) &\overset{\lambda}{\longrightarrow} &Z_\l
\\[3mm]
\downarrow & &\downarrow\frac{1}{\jmath}
\\[3mm]
\H/\PSL_2(\Z) &
\overset{\sim}{\longrightarrow}
&Z_{1/j}.
  \end{array}
  $$

Since the right hand side of \eqref{eq:j-inv} is invariant under 
the action of $SL_2(\Z)$ by Fact \ref{fact:SL2act}, 
the function $\dfrac{1}{j(\tau)}$ on $\H/\PGa(2)$ descends to  
a function on $\H/\PSL_2(\Z)$.   
We decompose the function $\dfrac{1}{j(\tau)}$ into 
the $\l$-function and a rational function 
$$\frac{1}{\jmath}:Z_\l \ni \l \mapsto \frac{27}{4}
\frac{\l^2(1-\l)^2}{(\l^2-\l+1)^3}\in Z_{1/j}$$
of degree $6$. 
This function is regarded as  
a branched covering map from $\P^1$ to $\P^1$ of degree $6$.
Since the natural projection from $\H/\PGa(2)$ to $\H/\PSL_2(\Z)$ 
is of degree $6$, it turns out that 
the function $\dfrac{1}{j(\tau)}$ on $\H/\PSL_2(\Z)$ is of degree $1$. 
We can easily see that 
the function $\dfrac{1}{\jmath}$ maps $\l=0,1,\infty$ to $0$ and 
$\l=-\w,-\w^2$ to $\infty$. 
By solving an equation 
$$\frac{27}{4}
\frac{\l^2(1-\l)^2}{(\l^2-\l+1)^3}=1,$$
we also see that $\dfrac{1}{\jmath}$ maps $\l=-1,\dfrac{1}{2},2$ to $1$.
Recall that the function $\l$ sends 
$\tau=\i\infty,\i,-\w^2$ to $0,\dfrac{1}{2},-\w^2$  
as in Fact \ref{fact:Inv-S-map}(1) and Lemma \ref{lem:midpt} (2). 
Since $\dfrac{1}{j(\tau)}$ is extended to 
a map $\wh{\H}/\PSL_2(\Z)\to \P^1$ of degree $1$ with sending 
$\tau=\i\infty, \i,-\w^2$ to $0, 1,\infty$, 
it coincides with the inverse of $\f_2$ by \eqref{eq:f2-vertices}.        
\end{proof}

\begin{theorem}
\label{th:Ana2-Jacobi}
We have 
\begin{equation}
\label{eq:Jacobi-2}
E_4(\tau)=\frac{\h_{00}(\tau)^8+\h_{01}(\tau)^8+\h_{10}(\tau)^8}{2}=
F(\frac{1}{12},\frac{5}{12},1;\frac{1}{j(\tau)})^4
\end{equation}
for any $\tau$ in the interior $\D^\circ$ of $\D$, where 
the $j$-invariant $j(\tau)$ is given in  \eqref{eq:j-inv}. 
It is  extended to an equality on the whole space $\H$. 
\end{theorem}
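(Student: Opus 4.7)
The plan follows the strategy of Theorem \ref{th:Ana1-Jacobi}, with $\PSL_2(\Z)$ in place of $\PGa(2)^{1/3}$: verify that both sides of \eqref{eq:Jacobi-2} are modular forms of weight $4$ for $SL_2(\Z)$, show they have matching divisors on $\wh{\H}/\PSL_2(\Z)\simeq\P^1$, and pin down the constant by evaluation at the cusp $\i\infty$.

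For the weight-$4$ transformation law of the right-hand side I would exploit that $j(\tau)$ is $SL_2(\Z)$-invariant, so the only effect of $\tau\mapsto g\cdot\tau$ on $F(\tfrac{1}{12},\tfrac{5}{12},1;1/j(\tau))$ comes from analytic continuation. Since $1/j=\f_2^{-1}$ by Theorem \ref{th:inv-f2}, and the circuit matrices of $\f_2$ with respect to the basis $R\cdot\mathbf{F}(z)$ are $N_0=T$ and $N_1=\i J$, paths $\tau\to T\cdot\tau$ and $\tau\to J\cdot\tau$ in $\H$ project via $1/j$ to loops homotopic to $\rho_0$ and $\rho_1$ in $Z$. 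Writing $g(\tau)=F(\tfrac{1}{12},\tfrac{5}{12},1;1/j(\tau))$, so that $2\pi g(\tau)$ is the second entry of $R\cdot\mathbf{F}(1/j(\tau))$ and $\tau$ is the ratio of the first to second entries, reading off the second rows of $N_0=T$ and $N_1=\i J$ yields
$$g(T\cdot\tau)=g(\tau),\qquad g(J\cdot\tau)=-\i\tau\,g(\tau).$$
Raising to the fourth power kills the $\i^k$-factors and produces the automorphy factors $1$ and $\tau^4$, matching those of $E_4$; as $T$ and $J$ generate $SL_2(\Z)$, both sides are modular forms of weight $4$.

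Next I would match divisors on the compactified quotient. By the valence formula (or by Corollary \ref{cor:8jyouwa} combined with the divisor of $\nu$), $E_4$ has exactly a simple zero at the order-$3$ elliptic point $\tau=-\w^2$ and no other zeros in $\D$. For the right-hand side, $\f_2(\infty)=-\w^2$ forces $1/j(\tau)\to\infty$ as $\tau\to-\w^2$; since the stabilizer of $-\w^2$ in $\PGa(2)$ is trivial while the one in $\PSL_2(\Z)$ has order $3$, we obtain $j(\tau)\sim c(\tau+\w^2)^3$, so $1/j$ has an order-$3$ pole at $-\w^2$ in the local $\H$-coordinate. Combined with the leading $z^{-1/12}$ behavior of the second entry of $R\cdot\mathbf{F}(z)$ at $z=\infty$ (from Table \ref{Tab:RS1512} and the connection formula of the type \eqref{eq:conn}), this gives $g(\tau)=O((\tau+\w^2)^{1/4})$, so $g(\tau)^4$ has a simple zero at $-\w^2$. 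At the other elliptic point $\tau=\i$, $1/j(\i)=1$ and Fact \ref{fact:G-K} yields $F(\tfrac{1}{12},\tfrac{5}{12},1;1)=\G(1)\G(\tfrac{1}{2})/(\G(\tfrac{11}{12})\G(\tfrac{7}{12}))\ne 0$, and elsewhere in $\D^\circ$ nonvanishing follows from the Wronskian formula \eqref{eq:Wronskian1} as in the corresponding step of Theorem \ref{th:Ana1-Jacobi}.

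Therefore $g^4/E_4$ extends to a nowhere-vanishing holomorphic function on $\wh{\H}/\PSL_2(\Z)\simeq\P^1$, hence is a nonzero constant. Evaluating at $\tau=\i\infty$ using $E_4(\i\infty)=1$ from \eqref{eq:Fourier} and $F(\tfrac{1}{12},\tfrac{5}{12},1;0)=1$, the constant is $1$, which proves \eqref{eq:Jacobi-2} on $\D^\circ$; the extension to all of $\H$ is immediate by analytic continuation since both sides are already single-valued there. The main technical obstacle is the precise local analysis at $\tau=-\w^2$: the fractional exponent $-\tfrac{1}{12}$ at $z=\infty$ combined with the order-$3$ ramification of $1/j$ must be checked to produce, after the fourth power, a genuine simple zero (and not a branch singularity) matching that of $E_4$, so that the ratio descends to a holomorphic, nowhere-vanishing section of the trivial bundle on $\P^1$.
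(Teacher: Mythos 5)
Your proposal is correct and follows essentially the same route as the paper, whose proof of Theorem \ref{th:Ana2-Jacobi} is simply a reference to the argument of Theorem \ref{th:Ana1-Jacobi}: you instantiate that argument with $N_0=T$, $N_1=\i J$, the order-$3$ pole of $1/j$ at $-\w^2$ against the exponent $1/12$, and normalization at the cusp. The only quibble is cosmetic: the second entry of $R\cdot\mathbf{F}(z)$ here is $B(\frac{1}{12},\frac{11}{12})F(\frac{1}{12},\frac{5}{12},1;z)$ rather than $2\pi F(\frac{1}{12},\frac{5}{12},1;z)$, a constant that is irrelevant to the transformation and divisor arguments.
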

\begin{proof}
We can show this theorem quite similarly to Proof of Theorem 
\ref{th:Ana1-Jacobi}.
\end{proof}

\begin{remark}
\label{rem:multi-valued-2}
Though $F(\frac{1}{12},\frac{5}{12},1;\frac{1}{j(\tau)})^4$
is a single-valued holomorphic function on $\H$, 
neither $F(\frac{1}{12},\frac{5}{12},1;\frac{1}{j(\tau)})$ 
nor $F(\frac{1}{12},\frac{5}{12},1;\frac{1}{j(\tau)})^2$ is so. 
We can see that any element of the $\PSL_2(\Z)$-orbit 
$$\{g\cdot (-\w^2)\in \H\mid g\in \PSL_2(\Z)\}$$ 
of $-\w^2$ is a simple zero of 
$F(\frac{1}{12},\frac{5}{12},1;\nu(\tau))^4=E_4(\tau)$. 
Note also that the monodromy 
representation for $\cE(\frac{1}{12},\frac{5}{12},1)$ includes 
the scalar multiple of $\i$. 
Since their restrictions to $\{\tau\in \H\mid \im(\tau)>1\}$ 
are single valued and have a period $1$,  
each of them admits a Fourier expansion: 
\begin{align*}
F(\frac{1}{12},\frac{5}{12},1;\frac{1}{j(\tau)})
&=1+60e^{2\pi\i\tau}-4860(e^{2\pi\i\tau})^2+
660480(e^{2\pi\i\tau})^3
-\cdots,\\
F(\frac{1}{12},\frac{5}{12},1;\frac{1}{j(\tau)})^2
&=1+120e^{2\pi\i\tau}-6120(e^{2\pi\i\tau})^2+737760(e^{2\pi\i\tau})^3
-\cdots;    
\end{align*}
see \eqref{eq:Fourier}     
for the Fourier expansion of 
$F(\frac{1}{12},\frac{5}{12},1;\frac{1}{j(\tau)})^4=E_4(\tau)$.  
\end{remark}

By pulling back equalities in Theorem \ref{th:Ana2-Jacobi} under 
Schwarz's map $\f_2$ 
in Definition \ref{def:Schwarz2}, we have the following. 
\begin{cor}
\label{cor:Ana2-Jacobi}
We have 
\begin{equation}
\label{eq:Jacobi-2-z}
F(\frac{1}{12},\frac{5}{12},1;z)^{4}=
\frac{\h_{00}(\f_2(z))^8+\h_{01}(\f_2(z))^8+\h_{10}(\f_2(z))^8}{2}=E_4(\f_2(z)),
\end{equation}
which is initially an equality on 
$\B_0\cap\B_1$ 
and extended to that on the whole space $Z$ by the analytic continuation. 
\end{cor}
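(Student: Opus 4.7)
The plan is to pull back the identity of Theorem~\ref{th:Ana2-Jacobi} under Schwarz's map $\f_2$. By Theorem~\ref{th:inv-f2}, the inverse of $\f_2$ is $\tau\mapsto 1/j(\tau)$, so setting $\tau=\f_2(z)$ in the identity $E_4(\tau)=F(\frac{1}{12},\frac{5}{12},1;1/j(\tau))^4$ and using $1/j(\f_2(z))=z$ yields
$$E_4(\f_2(z))=F\Big(\frac{1}{12},\frac{5}{12},1;z\Big)^{4},$$
which is the outer equality of \eqref{eq:Jacobi-2-z}. On $\B_0\cap\B_1$, $\f_2$ is given by the explicit single-valued formula of Definition~\ref{def:Schwarz2} and takes values inside the Schwarz triangle $\f_2(\H)\subset\D^\circ\subset\H$, so the substitution is unambiguous and the identity holds there. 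The middle equality of \eqref{eq:Jacobi-2-z} is just \eqref{eq:fullmodular} evaluated at $\f_2(z)$.

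To extend from $\B_0\cap\B_1$ to the whole $Z=\C-\{0,1\}$, I would verify that both sides transform in the same way under the generators $\rho_0$ and $\rho_1$ of $\pi_1(Z,\dot z)$. Writing $R\cdot\mathbf{F}(z)=\tr(u_1,u_2)$ with $\tau=\f_2(z)=u_1/u_2$ and $u_2$ proportional to $F(\frac{1}{12},\frac{5}{12},1;z)$, under $\rho_0$ the basis is multiplied by $N_0=T$, so $u_2$ is preserved and $F^4$ is unchanged, while $\f_2(z)\mapsto\tau+1$ and $E_4(\tau+1)=E_4(\tau)$. Under $\rho_1$ the basis is multiplied by $N_1=\i J$, so $u_2\mapsto -\i u_1=-\i\tau\, u_2$ and $F^4$ is multiplied by $(-\i\tau)^4=\tau^4$, while $\f_2(z)\mapsto J\cdot\tau=-1/\tau$ and weight-$4$ modularity of $E_4$ gives $E_4(-1/\tau)=\tau^4 E_4(\tau)$. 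The two transformation factors agree, so the equality is preserved by analytic continuation along every loop and therefore extends to the whole $Z$.

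The main obstacle, and the reason the fourth power appears on the left, is precisely the scalar $\i$ in $N_1=\i J$: one needs $(-\i)^4=1$ so that this extra $\i$-factor does not contaminate the monodromy calculation, and simultaneously weight $4$ for $E_4$ so that its modular factor $\tau^4$ under $J$ matches the factor $\tau^4$ coming from $F^4$. This is the monodromy manifestation of Remark~\ref{rem:multi-valued-2}, which observes that neither $F(\frac{1}{12},\frac{5}{12},1;\frac{1}{j(\tau)})$ nor its square is single-valued on $\H$, only the fourth power.
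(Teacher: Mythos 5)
Your proposal is correct and follows essentially the same route as the paper, which simply states that the corollary is obtained by pulling back Theorem \ref{th:Ana2-Jacobi} under $\f_2$ using $1/j(\f_2(z))=z$. Your explicit monodromy check (that both sides pick up the same factor $\tau^4$ under $\rho_1$ via $N_1=\i J$ and are unchanged under $\rho_0$) is a correct elaboration of the analytic-continuation step that the paper leaves implicit.
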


\section{Functional equations for $F(a,b,c;z)$}
\begin{cor} 
\label{cor:FE-HGS}
The hypergeometric series $F(a,b,c;z)$ satisfies 
functional equations:
\begin{align}
\nonumber
F(\frac{1}{6},\frac{1}{2},1;\frac{3\sqrt{3}\i z(1-z)}{(1+\w z)^3})^2
=&-\w^2F(\frac{1}{2},\frac{1}{2},1;z)^2
-\w F(\frac{1}{2},\frac{1}{2},1;\frac{z}{z-1})^2,\\
\nonumber
%
F(\frac{1}{6},\frac{1}{2},1;-\frac{3\sqrt{3}\i z(1-z)}{(z+\w)^3})^2
=&-\w F(\frac{1}{2},\frac{1}{2},1;z)^2
-\w^2F(\frac{1}{2},\frac{1}{2},1;\frac{z}{z-1})^2,\\
\label{eq:FE-HGS}
F(\frac{1}{12},\frac{5}{12},1;\frac{z^2}{4(z-1)})^2
=&F(\frac{1}{6},\frac{1}{2},1;z)F(\frac{1}{6},\frac{1}{2},1;\frac{z}{z-1}),\\
\nonumber
F(\frac{1}{12},\frac{5}{12},1;\frac{27z^2(1-z)^2}{4(z^2-z+1)^3})^4
=&F(\frac{1}{2},\frac{1}{2},1;z)^4
+F(\frac{1}{2},\frac{1}{2},1;\frac{z}{z-1})^4\\
\nonumber
&-F(\frac{1}{2},\frac{1}{2},1;z)^2F(\frac{1}{2},\frac{1}{2},1;\frac{z}{z-1})^2,
\end{align}
where $z$ is in a small neighborhood of $z=0$. 
\end{cor}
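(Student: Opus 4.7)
The plan is to eliminate the theta constants from the three families of Jacobi-type formulas already established — equations \eqref{eq:Jacobi-z}, Theorems \ref{th:Ana1-Jacobi} and \ref{th:Ana2-Jacobi}, and Corollaries \ref{cor:Ana1-Jacobi} and \ref{cor:8jyouwa} — by evaluating each at $\tau = \f_0(z)$ (so that $\lambda(\f_0(z)) = z$) or at $\tau = \f_1(z)$ (so that $\nu(\f_1(z)) = z$), and matching the resulting hypergeometric arguments via the rational identities relating $\lambda(\tau)$, $\nu(\tau)$, and $1/j(\tau)$.

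For the first identity in \eqref{eq:FE-HGS}, I evaluate Theorem \ref{th:Ana1-Jacobi} at $\tau=\f_0(z)$. Since $\lambda(\f_0(z)) = z$ and $1+\w z = \w(z+\w^2)$, the definition of $\nu$ gives $\nu(\f_0(z)) = 3\sqrt{3}\i z(1-z)/(1+\w z)^3$. On the theta side, the linear identity $\h_{00}(\tau)^4+\w\h_{10}(\tau)^4 = -\w^2\h_{00}(\tau)^4-\w\h_{01}(\tau)^4$ from Remark \ref{rem:rel-theta's}, combined with Jacobi's formula \eqref{eq:Jacobi-z}, produces the right-hand side. The second identity is obtained in the same way from Corollary \ref{cor:Ana1-Jacobi}, using the computation $\nu(\tau)/(\nu(\tau)-1) = -3\sqrt{3}\i\lambda(1-\lambda)/(\lambda+\w)^3$ carried out in its proof together with $\h_{00}^4+\w^2\h_{10}^4 = -\w\h_{00}^4-\w^2\h_{01}^4$. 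For the fourth identity, I evaluate Theorem \ref{th:Ana2-Jacobi} at $\tau = \f_0(z)$: the formula for $j$ gives $1/j(\f_0(z)) = 27z^2(1-z)^2/(4(z^2-z+1)^3)$, while the alternative expression $E_4 = \h_{00}^8 - \h_{00}^4\h_{01}^4 + \h_{01}^8$ from Remark \ref{rem:rel-theta's}, combined with \eqref{eq:Jacobi-z}, yields the right-hand side.

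The third identity is the most delicate. Combining Corollary \ref{cor:8jyouwa} with Theorem \ref{th:Ana2-Jacobi} gives, on all of $\H$,
$$F(\tfrac{1}{12},\tfrac{5}{12},1;1/j(\tau))^4
= F(\tfrac{1}{6},\tfrac{1}{2},1;\nu(\tau))^2\, F(\tfrac{1}{6},\tfrac{1}{2},1;\nu(\tau)/(\nu(\tau)-1))^2.$$
Substituting $z = \nu(\tau)$ (equivalently $\tau = \f_1(z)$) and extracting the square root — whose sign is $+$ since both sides equal $1$ at $z=0$ — yields the claim, provided one establishes the key algebraic relation
$$\frac{1}{j(\tau)} = \frac{\nu(\tau)^2}{4(\nu(\tau)-1)},$$
which is the main obstacle. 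A conceptual derivation proceeds by Galois-type reasoning: $\nu$ is $\Gamma(2)^{1/3}/\Gamma(2)\simeq A_3$-invariant by Theorem \ref{th:inv-Sch}, while $T\in\SL_2(\Z)\setminus\Gamma(2)^{1/3}$ acts on $\nu$ by $\nu\mapsto\nu/(\nu-1)$ (proof of Corollary \ref{cor:Ana1-Jacobi}), so the product $\nu^2/(\nu-1) = \nu\cdot \nu/(\nu-1)$ is $\SL_2(\Z)/\Gamma(2)\simeq S_3$-invariant and hence a rational function of the generator $1/j$ of the $S_3$-invariant rational functions of $\lambda$; comparison of divisors forces $\nu^2/(\nu-1) = c/j$, and evaluation at $\tau = \i$ (where $\lambda = 1/2$ gives $\nu = 2$ and $j = 1$) fixes $c = 4$. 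More directly, the factorization $\lambda^2-\lambda+1 = (\lambda+\w)(\lambda+\w^2)$ yields $\nu(\tau) - 1 = -(\lambda+\w)^3/(\lambda+\w^2)^3$, from which $\nu^2/(\nu-1) = 27\lambda^2(1-\lambda)^2/(\lambda^2-\lambda+1)^3 = 4/j$ drops out by arithmetic.
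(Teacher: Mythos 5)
Your proposal is correct and follows essentially the same route as the paper: substitute $z=\lambda(\tau)$ (equivalently $\tau=\f_0(z)$) into Theorem \ref{th:Ana1-Jacobi}, Corollary \ref{cor:Ana1-Jacobi} and Theorem \ref{th:Ana2-Jacobi} together with Jacobi's formula and Remark \ref{rem:rel-theta's} for the first, second and fourth identities, and substitute $z=\nu(\tau)$ into Corollary \ref{cor:8jyouwa} and Theorem \ref{th:Ana2-Jacobi} via $j=4(\nu-1)/\nu^2$ for the third. You additionally supply two details the paper glosses over — a verification of the relation $1/j=\nu^2/(4(\nu-1))$, which the paper only asserts, and the sign of the square root in the third identity (the paper's displayed chain there even carries an exponent slip, writing a fourth power where the product of the unsquared $F(\frac{1}{6},\frac{1}{2},1;\cdot)$ factors can only equal the square) — so your treatment is, if anything, slightly more careful.
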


\begin{proof}
We set 
$z=\l(\tau)=\dfrac{\h_{10}(\tau)^4}{\h_{00}(\tau)^4}.$ 
Then Theorem \ref{th:Ana1-Jacobi} and Fact \ref{fact:Jacobi}
yield that 
\begin{align*}
F(\frac{1}{6},\frac{1}{2},1;\frac{3\sqrt{3}\i z(1-z)}{(1+\w z)^3})^2
&=\h_{00}(\tau)^4+\w\h_{10}(\tau)^4
=-\w^2\h_{00}(\tau)^4-\w\h_{01}(\tau)^4
\\
&
=-\w^2F(\frac{1}{2},\frac{1}{2},1;z)^2
-\w F(\frac{1}{2},\frac{1}{2},1;\frac{z}{z-1})^2.
\end{align*}

We obtain the second equality by acting $T\in SL_2(\Z)$ on $\tau$ 
in the first equality as in Proof of Corollary \ref{cor:Ana1-Jacobi}. 

Theorem \ref{th:Ana2-Jacobi}, Fact \ref{fact:Jacobi}, 
Remark \ref{rem:rel-theta's} and the equality 
$j(\tau)=\dfrac{4}{27}\cdot 
\dfrac{(\l(\tau)^2-\l(\tau)+1)^3}{\l(\tau)^2(1-\l(\tau))^2}$ 
yield the last equality. 

To obtain the third equality, we set 
$z=\nu(\tau)=\dfrac{3\sqrt{3}\i \l(\tau)(1-\l(\tau))}{(1+\w \l(\tau))^3}$. 
Then we have 
$$F(\frac{1}{6},\frac{1}{2},1;z)F(\frac{1}{6},\frac{1}{2},1;\frac{z}{z-1})
=\frac{\h_{00}(\tau)^8+\h_{01}(\tau)^8+\h_{10}(\tau)^8}{2}\\
=F(\frac{1}{12},\frac{5}{12},1;\frac{z^2}{4(z-1)})^4
$$
by Corollary \ref{cor:8jyouwa}, Theorem \ref{th:Ana2-Jacobi} and the 
equality $j(\tau)=4\cdot \dfrac{\nu(\tau)-1}{\nu(\tau)^2}$.  
\end{proof}





\end{document}